\newtheoremstyle{theoreme}
 {\topsep} 
 {\topsep} 
 {} 
 {0pt} 
 {\bfseries} 
 {\newline} 
 {3pt} 
 {} 
\newtheoremstyle{proposition}
{\topsep} 
 {\topsep} 
 {} 
 {0pt} 
 {\bfseries} 
 {\newline} 
 {3pt} 
 {} 
 \newtheoremstyle{lemma}
 {\topsep} 
 {\topsep} 
 {} 
 {0pt} 
 {\bfseries} 
 {\newline} 
 {3pt} 
 {} 
\newtheoremstyle{definition}
 {\topsep}
 {\topsep}
 {}
 {0pt}
 {\bfseries}
 {\newline}
 {3pt}
 {}
\newtheoremstyle{remarque}
 {\topsep}
 {\topsep}
 {}
 {0pt}
 {\bfseries}
 {\newline}
 {3pt}
 {} 
\theoremstyle{theoreme}
\newtheorem{Thm}{Theorem}[section]
\newtheorem*{Thm*}{Theorem}
\newtheorem*{Notation}{Notation}
\newtheorem{Ex}[Thm]{Example}
\theoremstyle{lemma}
\newtheorem{Lem}[Thm]{Lemma}
\theoremstyle{proposition}
\newtheorem{Prop}[Thm]{Proposition}
\newtheorem*{Prop*}{Proposition}
\newtheorem{Claim}{Claim}
\newtheorem*{Claim*}{Claim}
\theoremstyle{definition}
\newtheorem{Def}[Thm]{Definition}
\theoremstyle{remarque}
\newtheorem{Rem}[Thm]{Remark}
\newcommand\entiersrel{\mathbb{Z}}
\newcommand\N{\mathbb{N}}
\newcommand\Z{\mathbb{Z}}
\newcommand\Q{\mathbb{Q}}
\newcommand\langue{\mathcal{L}}
\newcommand\etoile{\precsim^{\ast}}
\newcommand\netoile{\precnsim^{\ast}}
\newcommand\qoval{\precsim_{\text{val}}}
\newcommand\eqval{\sim_{\text{val}}}
\newcommand\nqoval{\precnsim_{\text{val}}}
\newcommand\Hahn[1]{\text{H}_{\gamma\in\Gamma}#1_{\gamma}}
\newcommand\supp{\text{supp}}
\newcommand\archqo{\precsim_{\text{arch}}}
\newcommand\narchqo{\precnsim_{\text{arch}}}
\newcommand\archeq{\sim_{\text{arch}}}
\newcommand\archval{v_{\text{arch}}}
\newcommand\preqo{\lessdot}
\newcommand\cprod{\overset{\leftarrow}{\times}}
\begin{document}
 \title{A structure theorem for abelian quasi-ordered groups\tnoteref{t1}\tnoteref{t2}}
 \tnotetext[t1]{All results presented here are part of my PhD. 
   In that regard I thank my supervisors Salma Kuhlmann and Françoise Point for suggesting this project and for 
   the help and support they gave me 
   during its completion, in particular for taking the time to 
   read and discuss my paper with me, as their many comments and suggestions played an essential role in shaping 
   this article into a publishable form.
   I also thank Françoise Delon for her questions which motivated some of my research.}
   \tnotetext[t2]{Throughout this paper, we use the abbreviations 
 q.o  and q.o.a.g which respectively mean quasi-order and quasi-ordered abelian group}
   
 \author{Gabriel Lehéricy}
 \address{Universität Konstanz, Fachbereich Mathematik und Statistik,
Universitätsstraße 10,
78457   Konstanz,  Germany\\
Université Paris 7, Bâtiment Sophie Germain,
8 Place Aurélie Nemours, 75205 PARIS Cedex 13, France}
 \ead{gabriel.lehericy@uni-konstanz.de}
 \begin{abstract}
 
     We introduce a notion of compatible quasi-ordered groups
     which unifies 
    valued and ordered abelian groups. It was proved by S.M. Fakhruddin 
    that a compatible quasi-order on a
    field is always either
    an order or a valuation.
     We show here that the group case is more complicated than the field case and describe the 
     general structure of a compatible quasi-ordered abelian group. We then define a notion of Hahn product 
     of compatible quasi-ordered groups and generalize Hahn's embedding theorem to quasi-ordered groups.      
     We also develop a notion of
     quasi-order-minimality  and establish a connection with C-minimality, thus
     answering a question of F.Delon. Finally, we use compatible quasi-ordered groups to give an example of a
     C-minimal group which is neither an ordered nor a valued group.

 \end{abstract}
 \begin{keyword}
  Valued groups, ordered groups, quasi-orders, C-minimality\\ \emph{MSC}: 13A18, 20F60, 06F15, 03C64,
 \end{keyword}

  \maketitle
 \section*{Introduction}

     Ordered and valued abelian groups appear naturally in the study of valued fields with additional structures. 
     The theory of exponential fields developed in \cite{SKuhlmann} shows that the value group of an exponential field
     is a contraction group (see \cite{Kuhlmann})
     which contains a lot of information about the field itself. Contraction groups 
    can be seen as a valued group in two
     ways: since it is an ordered group we can endow it with the natural valuation associated to this order;
     moreover, we can associate a valuation to the contraction map $\chi$ by defining
     $v(g):=\vert\chi(-\vert g\vert)\vert$.
     
     Valued groups are also naturally related to the theory of valued differential fields and 
     asymptotic couples, which is the central topic of asymptotic differential algebra.
     In \cite{Rosenlicht1}, \cite{Rosenlicht2}, \cite{Rosenlicht3}, \cite{Rosenlicht4} and \cite{Rosenlicht5}
     Rosenlicht studied Hardy fields and showed that the logarithmic derivative induces a map $\psi$ on 
     $G\backslash\{0\}$, where $G$ is the value group of the field. The pair $(G,\psi)$ is called an asymptotic couple.
     Aschenbrenner and Van Den Dries later gave a model-theoretic approach to asymptotic couples in
     \cite{Asch}, \cite{Aschdries},
     \cite{Aschdries2} and \cite{Aschdries3}. The map $\psi$ becomes a valuation if we extend it to $G$ by setting
     $\psi(0):=\infty$.
     
     Although ordered and valued structures
     are classically treated as different subjects they still bear significant similarities, which is why we are now
     interested in unifying both into a single theory.     
     Fakhruddin made a step in that direction in \cite{Fakhruddin} when he noticed that both orders and valuations could 
     be seen as particular instances of quasi-orders.
     Consider a field $K$ endowed with a quasi-order $\precsim$ satisfying the following axioms,
      where $y\sim z$ is defined as $y\precsim z\precsim y$:
  \begin{itemize}\label{qofields}
    \item[$(Q_1)$] $\forall x(x\sim 0\Rightarrow x=0)$
    \item[$(Q_2)$] $\forall x,y,z(x\precsim y\nsim z\Rightarrow x+z\precsim y+z)$ 
    \item[$(Q_3)$] $\forall x,y,z, (x\precsim y\wedge 0\precsim z)\Rightarrow xz\precsim yz$
   \end{itemize}
     
     The pair $(K,\precsim)$ is what Fakhruddin calls a quasi-ordered field. Here are the two most important results of 
     \cite{Fakhruddin}:
     
     \begin{Prop*}
      If $(K,\leq)$ is an ordered field then it is in particular a quasi-ordered field (i.e $\leq$ satisfies the 
      axioms
      above).    
      If $(K,v)$ is a valued field then $v$ induces a quasi-order on $K$ via $a\precsim b\Leftrightarrow v(a)\geq v(b)$
      and this quasi-order satisfies the axioms above.    
      
     \end{Prop*}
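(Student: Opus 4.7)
The plan is to verify each axiom $(Q_1), (Q_2), (Q_3)$ separately for the two cases. The first step in both cases is to identify what the equivalence relation $\sim$ becomes: in an ordered field, antisymmetry of $\leq$ gives $a\sim b \Leftrightarrow a=b$, while in a valued field $(K,v)$ with $a\precsim b :\Leftrightarrow v(a)\geq v(b)$, one has $a\sim b \Leftrightarrow v(a)=v(b)$. This observation makes the axioms easy to parse in each setting.

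For the ordered field case the verification is essentially immediate and can be dispatched in a few lines. Axiom $(Q_1)$ is antisymmetry applied to $x$ and $0$. Axiom $(Q_2)$ follows from the translation-invariance $x\leq y \Rightarrow x+z \leq y+z$, which holds unconditionally in an ordered group; in particular the hypothesis $y \nsim z$ is actually not needed in this case. Axiom $(Q_3)$ is literally one of the defining axioms of an ordered field.

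For the valued field case, the first two axioms are quick. Since $v(0)=\infty$, the condition $0\precsim z$ is automatically true for every $z$, so $(Q_3)$ reduces to showing $v(a)\geq v(b) \Rightarrow v(ac)\geq v(bc)$, which is immediate from $v(xy)=v(x)+v(y)$. Axiom $(Q_1)$ reduces to the fact that $v(a)=\infty$ iff $a=0$. The only delicate point is $(Q_2)$, where, given $v(a)\geq v(b)$ and $v(b)\neq v(c)$, one must prove $v(a+c)\geq v(b+c)$. The main tool is the strict form of the ultrametric inequality: if $v(x)\neq v(y)$ then $v(x+y)=\min(v(x),v(y))$. A clean approach is to split on whether $v(b)<v(c)$ or $v(c)<v(b)$: in the first subcase $v(b+c)=v(b)$, and one bounds $v(a+c)\geq \min(v(a),v(c))\geq v(b)$; in the second subcase $v(b+c)=v(c)$, and since $v(a)\geq v(b)>v(c)$ one gets $v(a+c)=v(c)$.

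The main obstacle is thus $(Q_2)$ for valued fields. The work is not deep but the case split must be done carefully, and it illuminates why the hypothesis $y\nsim z$ is present in the axiom in the first place: it is precisely what guarantees that $v(b+c)$ can be pinned down exactly via the strict triangle inequality, which in turn makes the asymmetric conclusion $x+z\precsim y+z$ meaningful.
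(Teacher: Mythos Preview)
Your verification is correct in every detail. The identification of $\sim$ in each case is right, the ordered-field axioms go through trivially as you say, and your case split for $(Q_2)$ in the valued case is the standard and cleanest way to handle it; your remark about why the hypothesis $y\nsim z$ is needed is also exactly the point.

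There is nothing to compare against here: the paper does not supply its own proof of this proposition. It is stated in the introduction as one of the two main results quoted from Fakhruddin's paper \cite{Fakhruddin}, and the paper simply cites it without argument. Your direct axiom-by-axiom check is the natural proof and is essentially what one finds in Fakhruddin's original article.
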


    \begin{Thm*}[Fakhruddin's dichotomy]
    Let $K$ be a field and $\precsim$ a quasi-order on $K$ satisfying the axioms $(Q_1),(Q_2)$ and $(Q_3)$. Then 
     $\precsim$ is either a field order or the quasi-order induced by a field valuation.
   \end{Thm*}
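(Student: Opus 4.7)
The natural dichotomy to exploit is the position of $1$ and $-1$ relative to $0$: a field order forces $-1 \prec 0 \prec 1$ and hence $1 \nsim -1$, while a field valuation satisfies $v(1)=v(-1)=0$ and hence $1 \sim -1$. The plan is to split on the position of $\pm 1$, rule out all inconsistent configurations via $(Q_2)$ and $(Q_3)$, and then treat the two remaining cases separately.

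\textbf{Reducing to two cases.} By totality, either $-1 \precsim 0$ or $0 \precsim -1$. In the first sub-case, $(Q_2)$ applied to $-1 \precsim 0$ with $z = 1$ (valid since $0 \nsim 1$ by $(Q_1)$) gives $0 \precsim 1$; if additionally $1 \sim -1$, transitivity then forces $0 \sim 1$, violating $(Q_1)$. In the second sub-case, assuming $1 \nsim -1$, $(Q_2)$ with $x=0, y=-1, z=1$ yields $1 \precsim 0 \precsim -1$; then $(Q_3)$ applied to $1 \precsim -1$ with $z = -1$ (legal since $0 \precsim -1$) gives $-1 \precsim 1$, whence $1 \sim -1$, a contradiction. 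Only two configurations survive: \textbf{(O)} $-1 \prec 0 \prec 1$ with $1 \nsim -1$, and \textbf{(V)} $0 \precsim -1$ with $1 \sim -1$.

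\textbf{Order case.} Compatibility of $\precsim$ with $+$ and $\cdot$ is axiomatic, so the only point to check is antisymmetry: $a \sim b$ with $a, b \in K^*$ implies $a = b$. The key preliminary is $b \nsim -b$ for every $b \in K^*$: for $b \succ 0$, $(Q_3)$ on $-1 \precsim 1$ with $z = b$ gives $-b \precsim b$; the reverse inequality would yield $b^2 \sim -b^2$ via two further uses of $(Q_3)$, while $(Q_3)$ on $-1 \precsim 0$ with $z = b^2$ produces $-b^2 \precsim 0 \precsim b^2$, forcing $b^2 \sim 0$ and contradicting $(Q_1)$; the case $b \prec 0$ follows by applying this to $-b$. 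With $b \nsim -b$ in hand, $(Q_2)$ applied to $a \precsim b$ with $z = -b$ and to $b \precsim a$ with $z = -a$ gives $a - b \precsim 0$ and $-(a-b) \precsim 0$; a final application of $(Q_2)$ to $a - b \precsim 0$ with $z = -(a-b)$ forces $-(a-b) \sim 0$, hence $a = b$ by $(Q_1)$.

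\textbf{Valuation case.} I would first establish $0 \precsim a$ for all $a \in K$ and $a \sim -a$ for all $a \in K^*$: for $a \succsim 0$, $(Q_3)$ on $1 \sim -1$ with $z = a$ directly gives $a \sim -a$; for $a \precsim 0$, $(Q_2)$ with $z = -a$ produces $0 \precsim -a$, and applying the previous case to $-a$ then yields $a \sim -a$ and $a \succsim 0$. Next, $\Gamma := K^*/{\sim}$ becomes a totally ordered abelian group under $[a][b] := [ab]$ (well-defined by $(Q_3)$), with order $[a] \leq [b] \iff b \precsim a$; compatibility of the order with multiplication is immediate from $(Q_3)$. It remains to prove the ultrametric inequality $a + b \precsim b$ whenever $a \precsim b$. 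For the strict case $a \prec b$, $(Q_2)$ gives $b \precsim a + b$; if moreover $a + b \succ b$, then $(Q_2)$ with $z = -b$ (using $-b \sim b$ to validate the side-condition $a + b \nsim -b$) yields $-b \precsim a$, i.e., $b \precsim a$, contradicting $a \prec b$. The equal case $a \sim b$ reduces to the strict one: if $a + b \succ b$ then $-a \sim b \prec a + b$, and the strict ultrametric applied to $-a$ and $a + b$ gives $b = (-a) + (a + b) \sim a + b$, again a contradiction. The chief obstacle throughout is the side-condition $y \nsim z$ in $(Q_2)$, which blocks the naive ``add $b$ to both sides''; the uniform workaround is to substitute $z = -b$ for $z = b$, exploiting in case (O) that $b$ and $-b$ sit on opposite sides of $0$, and in case (V) precisely the identification $-b \sim b$.
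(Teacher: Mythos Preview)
The paper does not supply its own proof of Fakhruddin's dichotomy; the theorem is quoted in the introduction as a result from \cite{Fakhruddin} and serves only as motivation for the group-theoretic investigation that follows. There is therefore no proof in the paper to compare your argument against.

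Evaluated on its own terms, your proof is essentially correct and follows the natural line. Two small points deserve attention. First, in case (O) the claim that ``compatibility of $\precsim$ with $+$ and $\cdot$ is axiomatic'' is not quite right for addition: axiom $(Q_2)$ carries the side condition $y\nsim z$, so once antisymmetry turns $\nsim$ into $\neq$ you still owe the case $y=z$. This is handled by first adding $-y$ (legitimate since $y\neq -y$ follows from your fact $b\nsim -b$) and then adding $2y$ (legitimate since $\operatorname{char} K\neq 2$, which you have from $1\nsim -1$); compare Proposition~\ref{oagr} in the paper, which does exactly this for groups. Second, in case (V) the step ``$(Q_2)$ with $z=-b$ \dots\ yields $-b\precsim a$'' is correct but underspecified: the inequality to which $(Q_2)$ is being applied is $0\precsim a+b$ (available because every element is $\succsim 0$ in this case), and the side condition $(a+b)\nsim -b$ comes from $-b\sim b\prec a+b$. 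With these two clarifications the argument goes through cleanly.
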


     These results show that the theory of quasi-ordered fields is an excellent way of unifying
     the theory of ordered fields
     with the theory of valued fields.  The main motivation behind this paper
     is to develop a similar theory for abelian groups 
     and give an answer to the following questions:\\
     
     \textit{Does Fakhruddin's dichotomy hold in the case of groups? If not, what is the structure of quasi-ordered
     abelian groups endowed with Fakhruddin's axioms?}\\

     Our hope is that the theory of quasi-orders will be useful in the study of valued groups and thus also
     in the study of valued fields endowed with an operator; example \ref{contreexemple}(c) below already reveals connections between 
     compatible quasi-ordered abelian groups and ordered difference fields. We are particularly interested in model-theoretic aspects
     of quasi-ordered groups, which is why we will also introduce a notion of quasi-order-minimality.
     Notions of minimality have been at the heart of recent developments in model theory; 
     amongst other examples we can mention 
     o-minimality (see \cite{PillaySteinhorn}) and C-minimality (see \cite{Macstein}, \cite{Delon} and \cite{DelonSim}).    
     Our idea is to study 
     quasi-ordered groups whose definable sets are particularly simple. 
     o-minimality is a special case of quasi-order-minimality, but the latter should also give
     interesting results concerning definable sets in valued groups. This
     might help us prove minimality results concerning contraction groups and asymptotic couples.   
     We will also see that the study of compatible quasi-ordered groups will be useful in the classification of 
     C-minimal groups. The notion of 
     C-group and C-minimal groups was introduced in     
     \cite{Macstein} by Macpherson and Steinhorn.
 Delon then 
 generalized the definition of C-group in \cite{Delon} to include ordered groups. In Delon's context, o-minimality and strong minimality both become 
 special cases of C-minimality. 
 Although abelian valued C-minimal groups were completely classified in \cite{DelonSim}, there is still no complete 
 classification of C-minimal groups. This paper shows that the class of compatible quasi-ordered groups constitutes a particularly simple class 
     of C-groups, so that the study of  compatible quasi-ordered groups could be an essential step 
     towards a classification of C-minimal groups.

        We start this paper with a preliminary section in which we recall
        the definitions of valuations and quasi-orders. 
        In Section \ref{qoagsection} we introduce the notion of compatible quasi-ordered abelian group (q.o.a.g), which is the group analog of 
        Fakhruddin's quasi-ordered field. We quickly establish that Fakhruddin's dichotomy fails 
        in the group case by giving an explicit example (Example \ref{contreexemple}). We then focus on describing the structure of a compatible 
        q.o.a.g. We basically show that a compatible q.o.a.g is a ``mix'' of ordered and valued groups, in the sense that 
        a compatible q.o.a.g is an extension of a valued group by an ordered group. More precisely, a compatible q.o.a.g 
        $(G,\precsim)$
         is composed of an ordered part and a part on which the quasi-order behaves like a valuation, the former 
        being an initial segment of $G$. We obtain this result by
         dividing the elements of the group into two categories, respectively
        called o-type and v-type elements. We show that the set of o-type elements is actually an ordered abelian group 
        and  that the quasi-order of $G$ induces a valuation on the quotient of $G$ over the subgroup of o-type elements.
        These facts are summarized in our main result, Theorems \ref{structure}, to which we also give two variants, 
        Theorems \ref{secondstructure} and \ref{thirdstructure}. Section \ref{productsection} introduce a notion of product of 
        compatible q.o.a.g's. We define an analog of Hahn's product for compatible q.o.a.g's, which we call the compatible product. In Section \ref{Hahntheoremsection}, we prove 
        a generalization of Hahn's embedding theorem for quasi-ordered groups (Theorem \ref{Hahnforqogroups}) which uses our notion of compatible 
        product. Since compatible q.o.a.g's are composed of an ordered part and a valued part, it is natural to ask whether 
        the elementary equivalence of two compatible q.o.a.g's is determined by the elementary equivalence of 
        their respective ordered parts and by the elementary equivalence of their respective valued parts. This is the subject of 
        section \ref{Fefermanvaughtsection}, where we show in particular that 
        the compatible product of an ordered group by a valued group preserves elementary equivalence (Theorem \ref{Fefermanvaughttheorem}).
        This result will be useful 
        for applications to C-minimal groups in Theorem \ref{prodCminimal}.
        In Section \ref{minimalitysection}, we introduce the notion of quasi-order-minimality for compatible quasi-ordered abelian groups.
        We then inquire into a question which Françoise Delon asked us:  
  Are compatible q.o.a.g's also C-groups? If yes, how does quasi-order-minimality relate to 
  C-minimality?
        We answer by showing that any compatible q.o.a.g naturally induces a compatible C-relation on the group, and that 
        quasi-order-minimality is then equivalent to C-minimality (see Proposition \ref{Creldunsqo} and Proposition 
  \ref{Cminimal}). We then show that any compatible q.o.a.g obtained as the product of an o-minimal group by a finite valued group is 
  C-minimal (Theorem \ref{prodCminimal}), which allows us to give an example of a C-minimal group which is neither ordered nor valued.
        
        This connection between 
        C-groups and compatible q.o.a.g's makes the latter a particularly useful class of objects for the study of 
        C- groups, so that the study of compatible q.o.a.g could be the first step toward a classification of 
        C-minimal groups. In particular, it should be emphasized that our work on compatible q.o.a.g's presented here played an essential 
        role in the discovery of the main result of \cite{Lehericy} which gives the structure of an arbitrary C-group.           
        Indeed, seeing compatible q.o.a.g's as an example of C-groups 
        enabled us to gain the right intuition on the structure of arbitrary C-groups. 
        The main result of \cite{Lehericy} 
        (Theorem 3.41),
         as well as the methods used to prove it, were 
        inspired by our work on compatible q.o.a.g's presented here (Note that 
        Theorem 3.41 of \cite{Lehericy} bears some similarities with Theorem \ref{structure} of the current paper).        
        It is interesting to note that            
        our structure Theorem \ref{structure} shows that compatible q.o.a.g's are a particularly simple 
        class of C-groups since the set of o-type elements is an initial segment (one can compare this setting to Theorem 
        3.41 of \cite{Lehericy}, which states that an arbitrary C-group can contain any arbitrary alternation of 
        o-type and v-type parts). In a sense, they are the simplest examples of C-groups whose C-relation 
        does not come from an order nor from a valuation, which is why 
        it is natural to try to understand C-groups by first considering compatible q.o.a.g's.
        Results obtained for compatible q.o.a.g's not only gives us an intuition for C-groups 
        but can also potentially lead to a generalization to arbitrary C-groups. In particular, studying model-theoretic properties 
        of compatible q.o.a.g's can lead to a better understanding of C-minimal groups.

     \section{Preliminaries}\label{prelsection}
     
     \textit{Every group considered in this paper is abelian}.
     By \textbf{ordered abelian group} we mean an abelian group $(G,+)$ equipped with a total order satisfying: 
      \begin{equation}\label{axiomoagr}\tag{OG}
       \forall x,y,z\in G, x\leq y\Rightarrow x+z\leq y+z.
      \end{equation}
   An ordered abelian group is always torsion-free (see \cite{Fuchs}).      
     A \textbf{valuation} on a group $G$ (see \cite{Priess}) is a map $v:G\to \Gamma\cup\{\infty\}$ such that:
	    \begin{enumerate}[(i)]
	    \item $\Gamma$ is a totally ordered set,  and this order is extended to $\Gamma\cup\{\infty\}$ by declaring
	    $\gamma<\infty$ for all $\gamma\in\Gamma$. The ordered set $\Gamma$ is called the \textbf{value chain} of the valued group 
	    $(G,v)$.
	    \item For any $g\in G$, $v(g)=\infty\Leftrightarrow g=0$
	    \item For any $g,h\in G$, $v(g+h)\geq \min(v(g),v(h))$ (ultrametric inequality)
	    \item For any $g\in G$, $v(g)=v(-g)$
	    \end{enumerate}  
	 The \textbf{trivial valuation} on $G$ is the valuation $v$ such that 
	 $v(g)=v(h)$ for any $g,h\in G\backslash\{0\}$.
	If $(G,v)$ is a valued group with value chain $\Gamma$, then $G^{\gamma}$ and $G_{\gamma}$ will respectively denote 
	the subgroups $\{g\in G\mid v(g)\geq\gamma\}$ and $\{g\in G\mid v(g)>\gamma\}$, and $B_{\gamma}$ denotes the quotient group 
	$G^{\gamma}/G_{\gamma}$. The pair $(\Gamma,(B_{\gamma})_{\gamma\in\Gamma})$ is called the \textbf{ skeleton} 
	of the valued group $(G,v)$. A natural example of a group valuation is the archimedean valuation associated to 
	an order. If $(G,\leq)$ is an ordered group, we define the \textbf{archimedean valuation} of $(G,\leq)$ 
	as follows: we say that $v(g)\leq v(h)$ if there are $m,n\in\N$ such that 
	$n\vert h\vert\leq m\vert g\vert$. We say that the ordered group $(G,\leq)$ is \textbf{archimedean} if 
	the archimedean valuation associated to $\leq$ is trivial.	
	A particularly interesting class of valuations are the $\Z$-module valuations considered in 
	\cite{SKuhlmann}. A valuation $v$ on a group $G$ is called a \textbf{$\Z$-module valuation} if $v(ng)=v(g)$ holds for every 
	$n\in\Z\backslash\{0\}$ and every $g\in G$. $\Z$-module valuations appear naturally on the value group of 
	valued fields. The archimedean valuation of an ordered abelian group is a $\Z$-module valuation. 
	If $(G,\psi)$ is the asymptotic couple associated to a H-field (see \cite{Aschdries2}), then $\psi$ is a 
	$\Z$-module valuation.
	
	We recall the notion of Hahn product: if $(B_{\gamma})_{\gamma\in\Gamma}$ is an ordered family of groups, 
	we define the \textbf{Hahn product} of the family $(B_{\gamma})_{\gamma\in\Gamma}$ as the group 
	$\Hahn{B}:=\{(g_{\gamma})_{\gamma\in\Gamma}\in\prod_{\gamma\in\Gamma}B_{\gamma}\mid\supp(g)\text{ is well-ordered}\}$, 
	where $\supp(g)$ denotes the support of $g$.	
	The group $\Hahn{B}$ is naturally endowed with a valuation defined as $v(g):=\min\supp(g)$. If 
	$(B_{\gamma},\leq_{\gamma})_{\gamma\in\Gamma}$ is an ordered family of ordered groups, we define the \textbf{lexicographic product} of the family 
	$(B_{\gamma},\leq_{\gamma})_{\gamma\in\Gamma}$ as the ordered group $(G,\leq)$, where $G=\Hahn{B}$ and $\leq$ is defined as follows: 
	we say that $g=(g_{\gamma})_{\gamma}\leq h=(h_{\gamma})_{\gamma}$ if $g_{\delta}\leq_{\delta}h_{\delta}$ where 
	$\delta=v(g-h)$. 		
	We recall two versions of Hahn's embedding theorem, one for ordered groups and the other one for groups endowed with 
	a $\Z$-module valuation:
	
	\begin{Thm*}[Hahn's embedding theorem for ordered groups, see \cite{Fuchs}]
	Let $(G,\leq)$ be an ordered group. Then $(G,\leq)$ is embeddable into a lexicographic product of 
	archimedean ordered groups.
	\end{Thm*}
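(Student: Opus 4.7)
The plan is to use the archimedean (natural) valuation $v$ associated with $\leq$, as introduced in Section \ref{prelsection}. Its skeleton $(\Gamma,(B_{\gamma})_{\gamma\in\Gamma})$ consists of a totally ordered set $\Gamma$ of archimedean classes together with quotients $B_{\gamma}=G^{\gamma}/G_{\gamma}$; each $B_{\gamma}$ inherits a natural total order from $\leq$, and the very definition of $v$ forces this order to be archimedean, so by H\"older's theorem each $B_{\gamma}$ embeds into $(\R,+,\leq)$. The target Hahn product $H:=\Hahn{B}$ is then endowed with the lexicographic order coming from the valuation $\min\supp$, and the goal is to produce an order-preserving group embedding $\phi:G\to H$.

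First, I would pass to the divisible hull $G^{\mathrm{div}}$, which is an ordered abelian group containing $(G,\leq)$ and whose archimedean skeleton is obtained from that of $G$ by divisibilising each $B_{\gamma}$. Since the Hahn product of a divisible family is itself divisible, any embedding of $G^{\mathrm{div}}$ will restrict to the required embedding of $G$, so I may assume $G$ is a divisible ordered group, i.e.\ an ordered $\Q$-vector space, and similarly for each $B_{\gamma}$. This reduction turns the extension step below into a $\Q$-linear question.

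Next, I would run a Zorn's lemma argument on the poset of pairs $(V,\phi)$, where $V$ is a $\Q$-subspace of $G$ and $\phi:V\to H$ is an order-preserving $\Q$-linear map that is \emph{valuation-compatible} in the sense that $v(x)=\min\supp(\phi(x))$ for every nonzero $x\in V$ and the leading coefficient of $\phi(x)$ coincides, through a fixed embedding $B_{\gamma}\hookrightarrow H$, with the class of $x$ in $B_{v(x)}$. Starting from the trivial pair, a maximal element $(V,\phi)$ exists by Zorn. I would then aim to show $V=G$: assuming $g\in G\setminus V$, one tries to extend $\phi$ to $V\oplus\Q g$ by choosing an appropriate image $\phi(g)\in H$.

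The main obstacle is precisely this extension step. The natural candidate for $\phi(g)$ is a pseudo-limit of the net $(\phi(x))_{x\in V}$ as $x$ approaches $g$ in $v$-distance. I would argue that this net is pseudo-Cauchy in $H$ and then invoke the \emph{spherical completeness} (maximality) of the Hahn product: any pseudo-Cauchy sequence indexed by a well-ordered set admits a pseudo-limit in $H$, which is exactly ensured by the well-orderedness of supports built into the definition of $\Hahn{B}$. A careful transfinite construction produces such a pseudo-limit whose value and leading term match those of $g$, and one then checks that $\phi$ extended in this way remains $\Q$-linear, order-preserving, and valuation-compatible, contradicting the maximality of $(V,\phi)$ and completing the proof.
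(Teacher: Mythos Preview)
The paper does not prove this statement; it is quoted in the Preliminaries section with a reference to \cite{Fuchs}, alongside the analogous result for $\Z$-module valuations cited from \cite{SKuhlmann}. There is therefore no in-paper proof to compare your proposal against.

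Your outline is a recognisable version of the classical argument and is essentially sound: pass to the divisible hull, form the archimedean skeleton, run Zorn on valuation-compatible partial embeddings, and extend past a maximal element using pseudo-limits and the spherical completeness of the Hahn product. Two minor points are worth tightening. First, you invoke H\"older to embed each $B_{\gamma}$ into $\R$, but then work in $\Hahn{B}$ rather than in a Hahn product of copies of $\R$; either target is fine, but H\"older is only needed for the latter, so decide which you mean. Second, the extension step is the heart of the proof, and the phrase ``a careful transfinite construction produces such a pseudo-limit'' hides precisely the work: one must verify that the chosen $\phi(g)$ makes the extended map order-preserving on $V\oplus\Q g$, and this is where the leading-coefficient condition together with valuation compatibility does its job.
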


	    \begin{Thm*}[Hahn's embedding theorem for $\Z$-module valuations, see \cite{SKuhlmann}]
	     Let $G$ be a divisible group and $v$ a $\Z$-module valuation on $G$ with skeleton 
	     $(\Gamma,(B_{\gamma})_{\gamma\in\Gamma})$. 
	     There is a group embedding $\phi:G\to\Hahn{B}$ and an automorphism of ordered set $\psi:\Gamma\to\Gamma$ such that 
	     $\psi(v(g))=\min\supp(\phi(g))$ for all $g\in G$ (in other words, $\phi$ is an embedding of valued groups).
	    \end{Thm*}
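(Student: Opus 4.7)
The plan is to build $\phi$ by a Zorn's lemma argument on partial valued embeddings, and then to invoke the spherical completeness of $\Hahn{B}$ to carry out the delicate extension step. For simplicity I take $\psi=\mathrm{id}_{\Gamma}$; the general statement follows by precomposition with an order automorphism. Since $G$ is divisible and $v$ is a $\Z$-module valuation, every quotient $B_{\gamma}=G^{\gamma}/G_{\gamma}$ is naturally a $\Q$-vector space, and hence so are $G$ and $\Hahn{B}$; I would therefore work throughout in the category of valued $\Q$-vector spaces.

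Let $\mathcal{P}$ be the poset of pairs $(H,\phi_{H})$ where $H$ is a $\Q$-subspace of $G$ and $\phi_{H}:H\to\Hahn{B}$ is a valued $\Q$-linear embedding satisfying $v(h)=\min\supp(\phi_{H}(h))$ for all nonzero $h\in H$ and inducing the canonical identification on each relevant component $B_{\gamma}$. Ordering $\mathcal{P}$ by extension and verifying that chains have upper bounds (take unions), Zorn yields a maximal element $(H_{0},\phi_{0})$. It then suffices to show $H_{0}=G$.

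Suppose not, and pick $g\in G\setminus H_{0}$. Consider the set $\Delta:=\{v(g-h):h\in H_{0}\}\subseteq\Gamma$. If $\Delta$ has a maximum $\delta=v(g-h_{0})$, I would define $\phi_{0}(g)$ by prescribing its $\delta$-component as a lift of the class of $g-h_{0}$ in $B_{\delta}$ (chosen freely if $\delta\notin v(H_{0})$, and chosen to match $\phi_{0}$ on $H_{0}$ otherwise), then extend by $\Q$-linearity; a short check shows this extension preserves the valuation and is injective, contradicting the maximality of $(H_{0},\phi_{0})$. If instead $\Delta$ has no maximum, then one obtains a pseudo-Cauchy sequence $(h_{\lambda})_{\lambda<\mu}$ in $H_{0}$ with pseudo-limit $g$, whose image $(\phi_{0}(h_{\lambda}))_{\lambda<\mu}$ is pseudo-Cauchy in $\Hahn{B}$.

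The main obstacle is this second case. I would invoke the spherical completeness of the Hahn product, namely that every pseudo-Cauchy sequence in $\Hahn{B}$ admits a pseudo-limit, which is the classical fact underlying all Hahn-type embedding theorems and relies essentially on the well-ordered-support condition in the definition of $\Hahn{B}$. This produces $y\in\Hahn{B}$ with the required asymptotic behaviour; setting $\phi_{0}(g):=y$ and checking well-definedness, injectivity, and preservation of $v$ gives the contradiction with maximality, completing the proof.
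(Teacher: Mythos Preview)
The paper does not contain a proof of this statement: it is quoted in the Preliminaries as a known result from \cite{SKuhlmann} and is used later (in the proof of Theorem~\ref{improvedHahnembeddingforvaluedgroups}) as a black box. So there is no ``paper's own proof'' to compare your attempt against.

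That said, your outline is the standard Zorn-plus-spherical-completeness argument, and it is essentially correct. A couple of points deserve tightening. In the case where $\Delta$ has a maximum $\delta=v(g-h_0)$, your phrasing ``chosen freely if $\delta\notin v(H_0)$, and chosen to match $\phi_0$ on $H_0$ otherwise'' is misleading: the $\delta$-component is not a free choice but must be exactly $(g-h_0)+G_\delta\in B_\delta$, and the image of $g$ is then $\phi_0(h_0)$ plus the element supported only at $\delta$ with that coefficient. The point is that this coefficient is \emph{not} hit by any element of $H_0$ at level $\delta$ (otherwise $\delta$ would not be maximal in $\Delta$), which is what guarantees injectivity of the extension. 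In the pseudo-Cauchy case you should also note explicitly that for any $h\in H_0$ one has $v(g-h)\in\Delta$, hence is not maximal, and therefore $v(g-h)=v(h_\lambda-h)$ for $\lambda$ large; this is what forces $\min\supp(y-\phi_0(h))=v(g-h)$ and makes the extension valuation-preserving. Finally, your extra requirement that $\phi_H$ ``induce the canonical identification on each component $B_\gamma$'' is in fact the content of Theorem~\ref{improvedHahnembeddingforvaluedgroups}, not of the cited theorem itself; it can be maintained through the induction, but you should check it in both branches of the dichotomy (it follows from the specific choices above).
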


     A \textbf{quasi-order} (q.o) is a binary relation which is reflexive and transitive. If 
     $\precsim$ is a quasi-order on a set $A$
 it induces an equivalence relation on $A$ by $a\sim b$ if and only if $a\precsim b\precsim a$. 
 We say that a q.o $\precsim$ is \textbf{total}
 if for every $a,b\in A$, either $a\precsim b$ or $b\precsim a$ holds.
 \textit{Unless explicitly stated otherwise, every q.o considered in this paper is total}.
 
 \begin{Notation}
  The symbol $\precsim$ will always denote a quasi-order, whereas $\leq$ is exclusively used to denote an order.
 The symbol $\sim$ will always denote the equivalence relation induced by the  quasi-order $\precsim$ and
  $cl(a)$ will denote the class of $a$ for this equivalence relation. The notation
  $a\precnsim b$ means $a\precsim b\wedge a\nsim b$. If $(A,\precsim)$ is a quasi-ordered set, $a\in A$ and $S\subseteq A$ then
  the notation $S\precsim a$ (respectively $S\precnsim a$) means 
  $s\precsim a$ (respectively $s\precnsim a)$ for all $s\in S$.
 \end{Notation}
Note that a quasi-order is an order
 if and only if $cl(a)=\{a\}$ for every $a$. 
 If $(A,\precsim)$ is a quasi-ordered set then $\precsim$ 
 induces an order on the quotient $A/\sim$ by $cl(a)\leq cl(b)$ if and only if $a\precsim b$. Note that
 a q.o
 $\precsim$ is total if and only if it induces a total order on
 $A/\sim$. 
 If $S$ is a subset of $A$, we say that $S$ is
 \textbf{$\precsim$-convex} in $A$ if for any $s,t\in S$, for any $a\in A$, 
 $s\precsim a\precsim t$ implies $a\in S$. 
 We say that $S$ is an \textbf{initial segment} of $A$ if for any $s\in S$ and any 
 $a\in A$, $a\precsim s$ implies $a\in S$.
 If $\precsim$ and $\etoile$ are two q.o's on $A$, we say that $\etoile$ is a \textbf{coarsening} of $\precsim$, or that 
 $\precsim$ is \textbf{finer} than $\etoile$, if $a\precsim b\Rightarrow a\etoile b$ for all $a,b\in A$.

 \begin{Ex}
 Let $(G,v)$ be a valued group.
 Then $a\precsim b\Leftrightarrow v(b)\leq v(a)$ defines a total quasi-order on $G$, called
 the quasi-order induced by $v$. We say that a q.o on a group is \textbf{valuational} if it is induced by a valuation. 
 A q.o induced by a $\Z$-module valuation is called \textbf{$\Z$-module valuational}. Note that the ultrametric inequality 
 can be reformulated in the language of q.o's as $g\precsim h\Rightarrow g+h\precsim h$. We will say that $(G,\precsim)$ is 
 a \textbf{valuationally quasi-ordered group} if $\precsim$ is the q.o induced by a valuation.
 \end{Ex} 
  
 In this paper, a \textbf{quasi-ordered group} is just a group endowed with a quasi-order without any further
 assumption.
 If $(G,\leq)$ is an ordered abelian group and $H$ a convex subgroup of $G$,
 there is a classical notion of the order induced by $\leq$ on the quotient 
 $G/H$ (see \cite{Fuchs}). In  our work with quasi-ordered groups it will also be practical to consider quotients.

\begin{Lem}\label{quotient}
 Let $(G,\precsim)$ be a quasi-ordered group
 and $H$ a subgroup of  $G$ such that the following condition is satisfied:
  \[\forall g_1,g_2\in G((g_1-g_2\notin H\wedge g_1\precsim g_2)\Rightarrow (\forall h_1,h_2\in H, g_1+h_1\precsim g_2+h_2))\]
 Then $\precsim$ induces a total q.o on the quotient $G/H$ defined by:
 \[g+H\precsim h+H\Leftrightarrow g-h\in H\vee (g-h\notin H\wedge g\precsim h)\]
\end{Lem}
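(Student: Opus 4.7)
The plan is to verify the four properties of a total quasi-order on the quotient, writing $\precsim_H$ for the relation defined in the statement: first that it is well-defined on cosets, then reflexivity, totality, and finally transitivity.

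For well-definedness, given alternate representatives $g'=g+h_1$ and $h'=h+h_2$ with $h_1,h_2\in H$, I observe that $g'-h'=(g-h)+(h_1-h_2)$, so $g'-h'\in H$ if and only if $g-h\in H$; this means the first disjunct is representative-independent. In the case $g-h\notin H$, applying the compatibility hypothesis of the statement to $g_1:=g$, $g_2:=h$ and the subgroup elements $h_1,h_2$ gives $g\precsim h\Rightarrow g'\precsim h'$, and applying it again to $g',h'$ with $-h_1,-h_2$ gives the converse. So the second disjunct is also representative-independent.

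Reflexivity is immediate from $g-g=0\in H$. Totality follows from the totality of $\precsim$ on $G$: if $g-h\notin H$ then either $g\precsim h$ or $h\precsim g$ on $G$ produces one of the two quotient comparisons.

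The bulk of the work is transitivity. Assume $g_1+H\precsim_H h+H$ and $h+H\precsim_H g_2+H$, and (else trivial) $g_1-g_2\notin H$. I will split on whether $g_1-h$ or $h-g_2$ lies in $H$. If $g_1-h\in H$, then $h-g_2=(h-g_1)+(g_1-g_2)\notin H$, so $h\precsim g_2$ in $G$; the compatibility hypothesis applied with $h,g_2$ and subgroup translate $g_1-h$ on the left yields $g_1\precsim g_2$, giving the conclusion. The symmetric case $h-g_2\in H$ is handled the same way. If neither difference lies in $H$, then the second disjunct of each hypothesis gives $g_1\precsim h$ and $h\precsim g_2$ in $G$, and transitivity of $\precsim$ finishes the job.

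The only obstacle is bookkeeping: every appeal to the hypothesis must be made with a pair whose difference is outside $H$, which the case split is designed to ensure. No new idea beyond the stated compatibility condition is required.
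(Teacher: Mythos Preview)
Your proof is correct and follows essentially the same approach as the paper's: well-definedness via the stated compatibility hypothesis, reflexivity and totality as immediate, and transitivity by a case split on which of the differences $g_1-h$, $h-g_2$ lies in $H$, invoking the hypothesis in the mixed cases. The only cosmetic difference is that you first dispose of the case $g_1-g_2\in H$ (making the both-in-$H$ subcase impossible), whereas the paper runs through all four subcases directly; your well-definedness argument is also spelled out in more detail than the paper's one-line remark.
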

\begin{proof}
 The fact that this relation is well-defined follows directly from the assumption.
 This relation is clearly reflexive and total, we just have to check that it is transitive.
   Assume $f+H\precsim g+H\precsim h+H$. 
  If $f-g$ and $g-h$ are both in $H$ then so is $f-h$.   
   If $g-h$ and $f-g$ both lie outside of $H$ then we have $f\precsim g\precsim h$ so $f\precsim h$, so $f+H\precsim h+H$   
   Assume $g-h\notin H$ and  $f-g\in H$. We have $g\precsim h$ and by applying our assumption this implies
   $g-g+f\precsim h+0$ i.e $f\precsim h$, so $f+H\precsim h+H$. The case where $g-h\in H$ and $f-g\notin H$ is similar.
\end{proof}
   
     If $(G,\precsim_G)$ and $(H,\precsim_H)$ are two q.o groups and $\phi:G\to H$ a map, we say that 
     $\phi$ is a \textbf{homomorphism of q.o groups} if it is a homomorphism of groups such that 
     $g_1\precsim_G g_2\Rightarrow\phi(g_1)\precsim_H\phi(g_2)$ for every $g_1,g_2\in G$. Note that 
     a bijective homomorphism of q.o groups is not necessarily an isomorphism of 
     q.o groups, i.e the condition $\phi(g_1)\precsim_H \phi(g_2)\Rightarrow g_1\precsim_G g_2$ is not necessarily satisfied. 
     To see this, consider $\Q^2$ endowed with the lexicographic product $\leq$ of the usual order of $\Q$. Now let 
     $\precsim$ denote the valuational q.o associated to the archimedean valuation of $(\Q^2,\leq)$. The identity map on $\Q^2$ is 
     a homomorphism of q.o groups from $(\Q^2,\leq)$ to $(\Q^2,\precsim)$, but it is not an isomorphism.
     
     Sections \ref{Fefermanvaughtsection} and \ref{minimalitysection} deal with model-theoretic aspects of compatible 
     q.o.a.g's. The natural language for compatible q.o.a.g's is $\langue:=\{0,+,-,\precsim\}$, where 
     $-$ is interpreted as a unary relation and $\precsim$ as the q.o. We will use 
     $\{x_1,x_2,\dots\}$ and $\{y_1,y_2,\dots\}$ as sets of variables. 
     We denote tuples with a bar, thus $\bar{x}$ means a tuple $(x_1,\dots,x_n)$ for a certain $n\in\N$ and 
     $\bar{g}$ means $(g_1,\dots,g_n)$ for a certain $n\in\N$. If $\bar{g}$ and $\bar{h}$ are tuples of elements of a group $G$ both of
     the same length $n$, 
     we denote by 
     $\bar{g}+\bar{h}$  the tuple $(g_1+h_1,\dots,g_n+h_n)$ and if $H$ is a subgroup of $G$ we denote by 
     $\bar{g}+H$ the tuple $(g_1+H,\dots,g_n+H)$ of elements of $G/H$. Equality between formulas is denoted by 
     $\equiv$ to avoid confusion with the equality symbol of the language, which is $=$.

     \section{Compatible quasi-orders}\label{qoagsection}
     
     Our goal is to find a good generalization of orders and valuations on abelian groups. To this end, we get inspiration
     from Fakhruddin's work and introduce the following definition:
     
     \begin{Def}
      Let $G$ be an abelian group and $\precsim$ a q.o on $G$. We say that $\precsim$ is \textbf{compatible (with $+$)}
       if it satisfies the axioms:\begin{itemize}
                                          \item[$(Q_1)$] $\forall x(x\sim0\Rightarrow x=0)$
	      \item[$(Q_2)$] $\forall x,y,z(x\precsim y\nsim z\Rightarrow x+z\precsim y+z)$
                                         \end{itemize}

      We also say that the pair $(G,\precsim)$ is a \textbf{compatible q.o.a.g }(quasi-ordered abelian group).
     \end{Def}

     As in the case of fields, it is easy to check that if $(G,\precsim)$ is actually an ordered abelian group or 
     if $\precsim$ is a valuational q.o then $\precsim$ is compatible with $+$. However, we have no analog of Fakhruddin's dichotomy, i.e 
     there are some compatible q.o's which are not an order and do not come from a valuation. 
     We will 
     show this now by giving three different examples where the q.o is neither an order nor valuational. 
     One could directly check that these q.o's satisfy axioms $(Q_1)$ and $(Q_2)$, but this will actually be a  consequence 
     of Theorem \ref{structure}.

      \begin{Ex}\label{contreexemple}
      \begin{enumerate}[(a)]
       \item Consider the group
$G:=\entiersrel^2$ endowed with the following quasi-order:\newline
$(a,b)\precsim (c,d)\Leftrightarrow (c\neq0)\vee(c=a=0\wedge b\leq d)$, where $\leq$ is the usual order of
$\entiersrel$.
The q.o is an order on $G^o:=0\times\entiersrel$ (it coincides with $\leq$) so it cannot be valuational.
However, it cannot be an order on $\entiersrel^2$ since  we have $(a,b)\sim (c,d)$ for any 
$a,c\neq0$ and any $b,d$.
	\item Set $G:=\Z$ and $G^o:=5\Z$. Endow $G^o$ with its usual order $\leq$, and extend $\leq$ to a q.o $\precsim$ on 
     $G$ by declaring that $f\precnsim g\sim h$ for any $f\in G^o$ and $g,h\notin G^o$. Then $(G,+,\precsim)$ is a compatible 
     q.o.a.g.
	\item Let $(K,\leq,\sigma)$ be an ordered difference field with the assumptions of Section 5 of 
	\cite{KuhlmannPointMatusinski}. In \cite{KuhlmannPointMatusinski}, the authors defined 
an equivalence relation $\sim_{\sigma}$ on $P_K:=K^{\geq0}\backslash K_v$, where $K_v$ is the valuation ring of $v$. 
This equivalence relation is related to the difference rank of $(K,\leq,\sigma)$ (see Theorem 5.3 of \cite{KuhlmannPointMatusinski}).
They also showed that the $\sim_{\sigma}$-classes are naturally ordered. This gives rise to a q.o on 
$P_K$ defined as $a\precsim_{\sigma}b\Leftrightarrow cl_{\sigma}(a)\leq cl_{\sigma}(b)$ ($cl_{\sigma}$ denotes the $\sim_{\sigma}$-class of $a$). This q.o can easily be extended to 
$K\backslash K_v$ by declaring that $-a\sim_{\sigma}a$ for every $a$. Note that $\precsim_{\sigma}$ satisfies the ultrametric 
inequality on $K\backslash K_v$. Now define a q.o $\precsim$
on $K$ as follows: if $a,b\in K_v$, then $a\precsim b\Leftrightarrow a\leq b$; if 
$a,b\notin K_v$ then $a\precsim b\Leftrightarrow a\precsim_{\sigma}b$; finally, declare 
$a\precnsim b$ whenever $a\in K_v$ and $b\notin K_v$. This makes $(K,+,\precsim)$ a compatible q.o.a.g. 
The q.o $\precsim$ contains both the information about the order $\leq$ of $K$ and some information about the 
$\sigma$-rank of $K$.
Note that we can do a similar construction with $H$-fields if we replace $\precsim_{\sigma}$ by the q.o 
$\precsim_{\phi}$ defined in Section 3.2 of \cite{KuhlmannLehericy}.
      \end{enumerate}

\end{Ex}

\begin{Rem}\label{axiomq2}
\begin{enumerate}
 \item In the case where $\precsim$ is actually an order, note that 
 $(Q_2)$ is technically weaker than (\ref{axiomoagr}) because of the condition ``$y\nsim z$''.
 However, the only ordered group which satisfies $(Q_2)$ but not (\ref{axiomoagr}) is 
 $\Z/2\Z$ with the order $0<1$ (see Proposition \ref{oagr}), so 
 $(Q_2)$ and (\ref{axiomoagr}) are essentially equivalent for orders.
 \item The condition ``$y\nsim z$'' in $(Q_2)$ is essential if we want to include valuational q.o's.
	Indeed, if $\precsim$ is a valuational q.o, and if we take
			$x\neq y=-z$ such that $x\precsim y$, we then have $x+z\neq0$ and $y+z=0$ which implies
			$y+z\precnsim x+z$.
 \end{enumerate}
\end{Rem}

 \subsection{o-type and v-type elements}

  We now fix a compatible q.o.a.g $(G,\precsim)$ and investigate its structure. As mentioned in the introduction, 
  we want to show that $(G,\precsim)$ 
  is a mix of ordered and valued groups, which is why we need to distinguish two kinds of elements in $G$.
    We say that $g\in G$ is \textbf{o-type} if $cl(g)=\{g\}\wedge ord(g)\neq2$ and we
    say that it is \textbf{v-type} if $\{g\}\varsubsetneq cl(g)\vee 2g=0$. Note that $0$ is both o-type and v-type and that it is the only element 
    of $G$ with this property.
We set  $G^o:=\{g\in G\mid g\text{ is o-type }\}$ and $G^v:=G\backslash G^o$. Note that $\precsim$ is an order on 
$G^o$. The condition $ord(g)\neq 2$ in the definition of o-type is motivated by Example
\ref{z2} below: we want $\precsim$ to satisfy (\ref{axiomoagr}) on $G^o$ so we do not want to count 
$1$ in $Z/2Z$ as an o-type element. The following proposition gives a characterization of o-type and v-type elements:
   
   \begin{Prop}\label{otypevtype}
 
 For any $g\in G$, $g$ is v-type if and only if $g\sim -g$ if and only if $0\precsim g\wedge 0\precsim-g$.
 Equivalently: $g$ is o-type if and only if $g\nsim -g\vee g=0$ if and only if $g\precsim 0\vee -g\precsim 0$
\end{Prop}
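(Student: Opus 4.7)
\emph{Plan.} The statement bundles three conditions --- $(a)$ ``$g$ is v-type'', $(b)$ ``$g \sim -g$'', $(c)$ ``$0 \precsim g \wedge 0 \precsim -g$'' --- together with a contrapositive reformulation in terms of o-type, which is automatic for $g \neq 0$ since ``o-type'' and ``v-type'' are then complementary by definition. The case $g = 0$ is immediate in every clause, so I fix $g \neq 0$; then $(Q_1)$ yields $g \nsim 0$ and $-g \nsim 0$, the side conditions that make $(Q_2)$ applicable throughout. The single preparatory consequence of the axioms that drives everything is: for $g \neq 0$, $g \precsim 0 \Rightarrow 0 \precsim -g$, obtained by applying $(Q_2)$ at $(x,y,z) := (g,0,-g)$.

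I would prove $(b) \Leftrightarrow (c)$ next. For $(c) \Rightarrow (b)$: if $g \nsim -g$, then $(Q_2)$ at $(0, g, -g)$ forces $-g \precsim 0$, hence $-g \sim 0$, contradicting $(Q_1)$. For $(b) \Rightarrow (c)$: if $g \sim -g$ and $g \precsim 0$, the preparatory observation gives $0 \precsim -g \sim g$, so $g \sim 0$, contradiction; hence $0 \precsim g$, and symmetrically $0 \precsim -g$. The direction $(b) \Rightarrow (a)$ is by inspection: either $g = -g$ (so $2g = 0$, v-type) or $-g \in cl(g) \setminus \{g\}$ (so $\{g\} \varsubsetneq cl(g)$, v-type).

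For $(a) \Rightarrow (b)$, the sub-case $2g = 0$ is trivial; otherwise pick $h \neq g$ with $h \sim g$ and assume for contradiction $g \nsim -g$, WLOG $g \precnsim -g$. Using $(b) \Leftrightarrow (c)$ and the preparatory observation, the alternative $0 \precnsim g$ is impossible: it leads to either $0 \precnsim -g$ (forcing $g \sim -g$ by $(c) \Rightarrow (b)$) or $-g \precnsim 0$ (giving $-g \precsim g$, which combined with $g \precsim -g$ forces $g \sim -g$). So the only consistent placement is $g \precnsim 0 \precnsim -g$. Then $h \sim g$ gives $h \precnsim 0$, and the sub-case $h \sim -h$ would force $0 \precsim h$ by $(b) \Rightarrow (c)$, a contradiction; hence $h \nsim -h$.

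The remaining case $h \nsim -h$ is the main obstacle. I first observe $h \nsim -g$ and $g \nsim -h$, since either would chain with $g \sim h$ to give $g \sim -g$. Applying $(Q_2)$ at $(g,h,-g)$ and $(h,g,-h)$ yields $0 \precsim h-g$ and $0 \precsim g-h$. Two further $(Q_2)$-applications at $(0, h-g, -h)$ and $(0, g-h, -g)$, whose $\nsim$-hypotheses I verify by ruling out $h - g \sim -h$ (which would give $g = 2h$, hence $2h \sim h$, and either $3h \neq 0$, which contradicts $h \precnsim 0$ after one more $(Q_2)$, or $3h = 0$, which forces $h = -g$ against $h \nsim -g$) and symmetrically $g - h \sim -g$, produce $-h \precsim -g$ and $-g \precsim -h$, so $-g \sim -h$. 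A final $(Q_2)$ at $(-g,-h,h)$, licensed by $h \nsim -h$, gives $h - g \precsim 0$; combined with $0 \precsim h - g$ this forces $h - g = 0$ via $(Q_1)$, contradicting $h \neq g$.
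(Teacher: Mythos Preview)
Your proof is correct, but the implication $(a)\Rightarrow(b)$ is far more laborious than necessary. The paper proves the contrapositive cycle instead and handles the key step $(g\precsim 0\vee -g\precsim 0)\Rightarrow\text{o-type}$ directly: assuming WLOG $g\precsim 0$ and having already obtained $g\nsim -g$, for any $h\sim g$ it applies $(Q_2)$ \emph{twice with the same translate} $z=-g$, once to $h\precsim g$ (giving $h-g\precsim 0$) and once to $g\precsim h$ (giving $0\precsim h-g$, the side-condition $h\nsim -g$ following from $h\sim g\nsim -g$); then $(Q_1)$ yields $h=g$ immediately. Your argument applies $(Q_2)$ asymmetrically with $z=-g$ and $z=-h$, which produces $0\precsim h-g$ and $0\precsim g-h$ rather than the conclusive pair, and then needs the entire extra layer (ruling out $h-g\sim -h$ via the $g=2h$ detour, deducing $-g\sim -h$, and one more $(Q_2)$) to finally recover $h-g\precsim 0$. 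One small point of exposition: in your $3h\neq 0$ sub-case, $3h\neq 0$ alone only gives $2h\neq -h$, not $2h\nsim -h$; the $\nsim$-hypothesis you need actually follows from $2h=g$ and the earlier $g\nsim -h$, so you might state that explicitly.
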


   \begin{proof}
    Since the first line is the contra-position of the second we just need to prove one of them.
    If $g$ is o-type with $g\neq0$, then by definition of o-type we have $cl(g)=\{g\}$ and $g\neq -g$ so 
    $-g\notin cl(g)$ i.e $g\nsim -g$.     
    If $g\nsim -g$, then by $(Q_2)$ the inequality $0\precsim g$ implies $-g\precsim g-g=0$. This shows that 
    ``$g$ is o-type'' $\Rightarrow (g\nsim -g\vee g=0)\Rightarrow (g\precsim 0\vee -g\precsim 0)$. Now assume 
    $g\precsim 0\vee -g\precsim 0$ holds and let us show that $g$ is o-type. Without loss of generality we may assume 
    $g\precsim 0$ and $g\neq0$. By $(Q_1)$, we have $g\precsim 0\nsim -g$, which by 
     $(Q_2)$ implies $0\precsim -g$. If $g\sim -g$ were true, we would then have 
    $g\sim 0$, which is a contradiction to $(Q_1)$. Thus, $g\nsim -g$, which in particular implies $ord(g)\neq2$.
    Let $h\in G$ with $h\sim g$; we have $h\precsim g$ and $g\precsim h$. Since $-g\nsim g\sim h$, we can apply 
    $(Q_2)$ to both inequalities and we get $h-g\precsim 0$ and $0\precsim h-g$ which implies $g-h\sim0$, which
    by $(Q_1)$ means $h=g$. This proves
    $cl(g)=\{g\}$, so $g$ is o-type.
   \end{proof}

    As mentioned in Remark \ref{axiomq2}, $(Q_2)$ is not the same as axiom (\ref{axiomoagr}) of 
    ordered abelian groups, and it can in fact happen
    that a compatible quasi-order is an order but does not satisfy (\ref{axiomoagr}):
    \begin{Ex}\label{z2}
  If we order $\entiersrel/2\entiersrel$ by $0<1$ then $(\entiersrel/2\entiersrel,\leq)$ does not satisfy
  (\ref{axiomoagr}) but it is a compatible q.o.a.g. More precisely, $\leq$ is the q.o induced by the 
   trivial valuation on $G$.
  \end{Ex}
  
    Remarkably, this is the only pathological case. To show this we need the following lemma:
    
     \begin{Lem}
  Assume $\precsim$ is an order and assume that $G$ has an element of order $2$. Then $G=\entiersrel/2\entiersrel$.
 \end{Lem}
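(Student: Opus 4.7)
The plan is to take a nonzero element $g$ with $2g=0$ and show that the existence of any $h\in G\setminus\{0,g\}$ leads to a contradiction, forcing $G=\{0,g\}\cong\Z/2\Z$. The tool will be Proposition \ref{otypevtype} together with two carefully chosen applications of axiom $(Q_2)$, exploiting the relation $-g=g$ to make $g$ play the role of its own inverse.

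First I would record the preliminary observation that since $\precsim$ is an order, the induced equivalence $\sim$ coincides with equality. By the definition of v-type, our element $g$ is then v-type (since $2g=0$), so Proposition \ref{otypevtype} yields $0\precsim g$ and $0\precsim -g$; these two conditions collapse to the single inequality $0\precsim g$ because $-g=g$. This is the only use of the v-type characterization.

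The main step is then to pick $h\in G$ with $h\neq 0$ and $h\neq g$ and sandwich it between itself. Applying $(Q_2)$ to $0\precsim g$ with the shift $z=h$ gives $h\precsim g+h$, where the non-equivalence hypothesis $g\nsim h$ is automatic from $g\neq h$ in the order. Applying $(Q_2)$ again to the resulting inequality $h\precsim g+h$ with shift $z=g$ yields $h+g\precsim g+h+g$, and the right-hand side simplifies to $h$ using $2g=0$; the non-equivalence hypothesis $g+h\nsim g$ is immediate from $h\neq 0$. Combining $h\precsim g+h$ and $g+h\precsim h$ gives $h\sim g+h$, hence $h=g+h$ because $\precsim$ is an order, and therefore $g=0$, contradicting our choice.

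I do not expect any serious obstacle: the whole argument is a short computation once Proposition \ref{otypevtype} is invoked. The only thing to watch is the verification of the non-equivalence clauses in $(Q_2)$ at each application, but these are trivial here since $\sim$ is equality in an ordered group. The crucial algebraic input is simply that $2g=0$ makes $g+(g+h)=h$, which is what allows the second application of $(Q_2)$ to close the bracket.
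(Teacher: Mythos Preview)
Your proposal is correct and follows essentially the same route as the paper's proof: invoke Proposition \ref{otypevtype} to get $0\precsim g$, then for any $h\notin\{0,g\}$ apply $(Q_2)$ twice (shifting first by $h$, then by $g$) to obtain $h\sim g+h$, whence $g=0$, a contradiction. The only cosmetic difference is that the paper phrases the case analysis as ``$h\neq g$ implies $h=0$'' rather than assuming $h\neq 0,g$ from the outset.
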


 \begin{proof}
  Let $g$ be an element of order $2$. Then $g$ is  v-type, which by Proposition \ref{otypevtype} implies 
  $0\precsim g$.
  Let $h\neq g$; since $\precsim$ is an order we have $h\nsim g$, so we can apply $(Q_2)$ to $0\precsim g$ which yields
  $h\precsim g+h$. If $h\neq 0$ then  $g\nsim g+h$ so we can apply $(Q_2)$ to the previous inequality and get
  $g+h\precsim g+g+h=h$, hence $h\sim g+h$, but since $\precsim$ is an order this implies $h=g+h$ hence $g=0$, which is a contradiction. This
  proves that $h\neq g$ implies $h=0$.
  We thus have $G=\{0,g\}\cong\entiersrel/2\entiersrel$
 \end{proof}

 \begin{Prop}\label{oagr}
	 Let $(G,\precsim)$ be a compatible q.o.a.g.	 
	 If $\precsim$ is an order and if $G\neq \entiersrel/2\entiersrel$, then $(G,\precsim)$ is an ordered abelian group,
	 i.e (\ref{axiomoagr}) is satisfied.
	\end{Prop}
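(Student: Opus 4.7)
The gap between $(Q_2)$ and the ordered-group axiom $(\ref{axiomoagr})$ lies entirely in the side condition ``$y\nsim z$''; since $\precsim$ is an order, $\sim$ is just equality, so this condition is simply $y\neq z$. Thus $(\ref{axiomoagr})$ follows from $(Q_2)$ except possibly in the degenerate case $y=z$, where it reduces to showing that $x\leq y$ implies $x+y\leq 2y$.

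My plan is to exploit the preceding lemma. Its contrapositive asserts that, since $\precsim$ is an order and $G\neq\Z/2\Z$, the group $G$ has no element of order $2$. In particular, for every nonzero $y\in G$ one has $2y\neq 0$, and hence $y\neq -y$. These two non-equalities are precisely what is needed to feed $(Q_2)$ with the ``catalysts'' $-y$ and $2y$ without violating the ``$\nsim$'' clause.

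Concretely, after disposing of the trivial case $y=z=0$ (where the conclusion is just the hypothesis), I would start from $x\leq y$ and apply $(Q_2)$ with the auxiliary element $-y$; this is legal because $y\neq -y$, and it produces $x-y\leq 0$. A second application of $(Q_2)$, this time with the auxiliary element $2y$ (legal because $2y\neq 0$), transforms $x-y\leq 0$ into $x+y\leq 2y$, which is the required inequality.

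The only delicate point is making sure each use of $(Q_2)$ really meets its side condition, and this is exactly where the hypothesis $G\neq\Z/2\Z$ enters, via the preceding lemma: it guarantees the absence of $2$-torsion, which simultaneously legitimises both catalyst choices. Once that is secured the argument is a mechanical ``add $-y$, then add $2y$'' cancellation, so I do not expect any genuine obstacle beyond bookkeeping the trivial subcases $y=0$ and $x=y$.
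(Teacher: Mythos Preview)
Your proposal is correct and follows essentially the same route as the paper: reduce to the case $y=z$, invoke the preceding lemma to rule out $2$-torsion, then apply $(Q_2)$ first with $-y$ (using $y\neq -y$) and then with $2y$ (using $2y\neq 0$). You are even slightly more explicit than the paper about the trivial subcase $y=0$, which the paper glosses over.
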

 
\begin{proof}
 
  We want to prove:
  $\forall x,y,z\in G,x\precsim y\Rightarrow x+z\precsim y+z$.
  Since $\precsim$ is an order, we have $y\sim z\Rightarrow y=z$ for any $y,z\in G$. Thus, we only have to 
  consider the case where $y=z$, since the other cases are given by axiom $(Q_2)$.
  Assume then that $x\precsim y$. Since $G\neq\entiersrel/2\entiersrel$, the previous lemma ensures that 
  $y\neq -y$, so $y\nsim -y$. We can then apply $(Q_2)$ to $x\precsim y$ and we get 
  $x-y\precsim 0$. Since $2y\neq0$, we can again apply $(Q_2)$ to this inequality and obtain $x+y\precsim y+y$, which is what 
  we wanted.
 \end{proof}
 
 \begin{Rem}
  Since the case $\Z/2\Z$ is somewhat degenerate, it would be tempting to exclude this case from the definition 
  of compatible q.o.a.g's. However, this seems rather unreasonable in view of Proposition 
  \ref{convexsubgroupProp} below. Indeed, we want the class of compatible q.o.a.g's to be stable under quotient by 
  convex subgroups, which would not be the case if $\Z/2\Z$ were excluded. 
 \end{Rem}

 An immediate consequence of Proposition \ref{oagr} is the following:
 
 \begin{Prop}\label{qoagotype}
   The compatible q.o.a.g $(G,\precsim)$ is an ordered abelian group  if and only if every element of $G$ is o-type.
   \end{Prop}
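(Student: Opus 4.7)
The plan is to prove both directions using the characterizations already established in the paper, relying in particular on Proposition \ref{oagr} for the nontrivial direction.

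For the forward direction, I would assume $(G,\precsim)$ is an ordered abelian group and show every $g \in G$ is o-type. Since $\precsim$ is then an order (hence antisymmetric), the equivalence class $cl(g) = \{g\}$ for every $g$. Moreover, the preliminary section explicitly recalls that any ordered abelian group is torsion-free, so no nonzero element has order $2$; for $g = 0$, the order is $1 \neq 2$. Thus the definition of o-type is satisfied by every element.

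For the backward direction, I would assume every element of $G$ is o-type and deduce that $(G,\precsim)$ is an ordered abelian group. From $cl(g) = \{g\}$ for every $g$, the q.o $\precsim$ is antisymmetric, hence an order. Since no element of $G$ has order $2$ (by definition of o-type for nonzero elements, and $ord(0)=1$), we have $G \neq \Z/2\Z$. Proposition \ref{oagr} then applies and yields that $(G,\precsim)$ satisfies axiom \eqref{axiomoagr}, making it an ordered abelian group.

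There is essentially no obstacle here: the proposition is described in the text as an immediate consequence of Proposition \ref{oagr}, and the only thing to verify is the bookkeeping that ruling out elements of order $2$ via the o-type condition is exactly what is needed to exclude the pathological $\Z/2\Z$ case. The proof is short, with the main content being simply to unpack the definition of o-type against the statements of Proposition \ref{oagr} and the torsion-free property of ordered abelian groups.
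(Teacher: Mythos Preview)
Your proof is correct and matches the paper's approach: the paper states the proposition as an immediate consequence of Proposition \ref{oagr}, and you have simply spelled out the two directions, invoking \ref{oagr} for the backward implication and the torsion-freeness of ordered abelian groups for the forward one.
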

 
  We are now going to investigate $G^o$ and $G^v$ in more details and show that they have remarkable properties.
   
   \subsection{Properties of $G^o$ and $G^v$}
   Set $\Gamma=G/\sim$ and denote by $\leq$ the order induced by $\precsim$ on $\Gamma$. For any 
   $\gamma\in \Gamma$, set \newline
   $G^{\gamma}:=\{g\in G\mid cl(g)\leq \gamma\}$ and $G_{\gamma}:=\{g\in G\mid cl(g)<\gamma\}$
  
   \begin{Rem}
    If  $\precsim$ is the q.o induced by a valuation $v$ then 
    $\Gamma$ with the reverse ordering of $\leq$ is isomorphic to $v(G)$. 
    In that case, our definition of $G^{\gamma},G_{\gamma}$ coincides 
    with the definition given in Section \ref{prelsection} for valued groups, i.e 
    $G^{\gamma}=\{g\in G\mid v(g)\geq\gamma\}$ and $G_{\gamma}=\{g\in G\mid v(g)>\gamma\}$.
    If $(G,\leq)$ is an ordered abelian group then $\Gamma$ and $G$ are isomorphic as ordered sets.
   \end{Rem}

   The following two lemmas will have important consequence on $G^o$ and $G^v$:
   
   \begin{Lem}\label{lemma1}
 If $h$ is v-type and $g\nsim h$ then $g-h\sim g+h$.
\end{Lem}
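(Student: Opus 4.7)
The plan is to use the characterization of v-type elements from Proposition \ref{otypevtype}, which says that $h$ being v-type is equivalent to $h\sim -h$. Once we have this, axiom $(Q_2)$ gives us everything we need with essentially no extra work.

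First I would note that since $h\sim -h$ and $g\nsim h$, transitivity of $\sim$ forces $g\nsim -h$ as well. This is the key observation that allows us to apply $(Q_2)$ in both directions with $z=g$: both $h\nsim g$ and $-h\nsim g$ are available.

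Next I would apply $(Q_2)$ twice. Starting from $-h\precsim h$ (which holds because $h\sim -h$), taking $y=h$, $x=-h$, $z=g$, and using $h\nsim g$, axiom $(Q_2)$ yields $g-h\precsim g+h$. Symmetrically, from $h\precsim -h$, taking $y=-h$, $z=g$, and using $-h\nsim g$, axiom $(Q_2)$ yields $g+h\precsim g-h$. Combining these two inequalities gives $g-h\sim g+h$, which is the claim.

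There is no real obstacle here: the lemma is essentially a direct corollary of Proposition \ref{otypevtype} together with axiom $(Q_2)$, and the only subtlety is the preliminary remark that $g\nsim h$ automatically upgrades to $g\nsim -h$ thanks to the v-type property of $h$.
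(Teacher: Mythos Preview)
Your proof is correct and follows essentially the same approach as the paper: invoke Proposition \ref{otypevtype} to get $h\sim -h$, then apply $(Q_2)$ with $z=g$ to both inequalities $h\precsim -h$ and $-h\precsim h$. Your explicit observation that $g\nsim h$ together with $h\sim -h$ forces $g\nsim -h$ is a detail the paper leaves implicit, but otherwise the arguments are the same.
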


\begin{proof}
 Since $h$ is v-type we have by Proposition \ref{otypevtype} $h\sim -h$,  which means $h\precsim -h\precsim h$. Since $g\nsim h$, we can 
 apply $(Q_2)$ to these inequalities and we get $g+h\precsim g-h\precsim g+h$.
\end{proof}
   
   \begin{Lem}\label{sousgroupes}
Let $H$ be an initial segment of $G$ containing $G^o$. Then $H$ is a subgroup of $G$.
\end{Lem}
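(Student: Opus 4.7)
The plan is to check separately that $H$ contains $0$, is closed under negation, and is closed under addition. The first is immediate since $0\in G^o\subseteq H$. A useful preliminary observation is that $H$, being an initial segment of a \emph{total} quasi-order, is automatically a union of $\sim$-classes: if $g\in H$ and $g'\sim g$ then $g'\precsim g\in H$ forces $g'\in H$.

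For closure under negation, I would split on the type of $g\in H$. If $g$ is o-type, then $-g$ is o-type as well (the class $cl(-g)=\{-g\}$ is still a singleton and $\mathrm{ord}(-g)=\mathrm{ord}(g)\neq 2$), so $-g\in G^o\subseteq H$. If $g$ is v-type, Proposition \ref{otypevtype} gives $-g\sim g$, and the closure-under-$\sim$ observation yields $-g\in H$.

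The crux is closure under addition, which I would attack by contradiction. Suppose $g,h\in H$ but $g+h\notin H$. Since $G^o\cup\{0\}\subseteq H$, the element $g+h$ must be a nonzero v-type element, hence $-(g+h)\sim g+h$ by Proposition \ref{otypevtype}. By the negation step, $-g,-h\in H$, while $g+h\notin H$. Because $\precsim$ is total and $H$ is an initial segment closed under $\sim$, this forces $-g\precnsim g+h$ (otherwise $g+h$ would be $\precsim -g$ or $\sim -g$, both putting $g+h$ into $H$) and, by the same reasoning, $-h\precsim g+h$. The key move is then to apply axiom $(Q_2)$ to the triple $(x,y,z)=(-h,\,g+h,\,-g)$: the hypotheses $x\precsim y$ and $y\nsim z$ are exactly what we have just established, so $(Q_2)$ yields
\[
-h+(-g)\;\precsim\;(g+h)+(-g)\;=\;h.
\]
Thus $-(g+h)\precsim h\in H$, so $-(g+h)\in H$; but $-(g+h)\sim g+h$, and since $H$ is a union of $\sim$-classes, $g+h\in H$, contradicting the assumption.

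The main conceptual content sits in the single $(Q_2)$ application, and the anticipated obstacle is verifying its non-equivalence hypothesis $y\nsim z$; this is precisely where the interplay between $-g\in H$ and $g+h\notin H$ is used, via the fact that a total initial segment is a union of $\sim$-classes.
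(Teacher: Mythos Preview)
Your proof is correct and follows essentially the same approach as the paper's: both handle negation by splitting on the type of the element (using Proposition~\ref{otypevtype}) and handle addition via a single application of $(Q_2)$ together with closure under negation. The only difference is organizational---the paper splits the addition step into the cases $g\precsim 0$ and $0\precsim g$ and applies $(Q_2)$ to $0\precsim g$ with $z=-(g+h)$, whereas you run a unified contradiction argument, observing that $g+h\notin H$ forces $g+h$ to be v-type and then applying $(Q_2)$ to $-h\precsim g+h$ with $z=-g$; both routes land on $-(g+h)\in H$ and finish via closure under negation.
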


\begin{proof} 
 Since $0$ is o-type, $0\in G^o\subseteq H$.
 Let $h\in H$. If $-h\sim h$, then in particular $-h \precsim h$, so $-h\in H$ because $H$ is an initial segment of $G$. If
 $-h\nsim h$ then by Proposition \ref{otypevtype} $h$ is o-type so $-h\in G^o\subseteq H$. This shows that 
 $H$ is closed under taking the inverse, and we are now going to show that it is closed under addition.
 Let $g,h\in H$; we can assume $h\neq 0$. If $g\precsim 0$ then $(Q_2)$ implies
 $g+h\precsim h\in H$  hence
 $g+h\in H$ because $H$ is an initial segment of $G$. 
 Now assume $0\precsim g$. If $-(g+h)\notin H$, then in particular $g\nsim -(g+h)$ (because $H$ is an initial segment of $G$), so we can apply $(Q_2)$ to the
 inequality $0\precsim g$ and get $-(g+h)\precsim -h\in H$, so $-(g+h)\in H$, which is a contradiction. 
 Thus, $-(g+h)\in H$, and since $H$ is closed under taking the inverse this implies $g+h\in H$. 
\end{proof}
   
   We can now give the main properties of $G^o$:
   \begin{Prop}\label{segin}
    $G^o$ is an initial segment and a subgroup of $G$. In particular, $(G^o,\precsim)$ is an ordered abelian group.
   \end{Prop}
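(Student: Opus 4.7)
The plan is to prove the initial-segment property first, since the remaining assertions then follow almost for free. Once $G^o$ is shown to be an initial segment of $G$, Lemma \ref{sousgroupes} applied with $H=G^o$ immediately gives that $G^o$ is a subgroup; and since every element of $G^o$ satisfies $cl(g)=\{g\}$ by definition of o-type, $\precsim$ restricts to an order on $G^o$. Since moreover $G^o$ contains no element of order $2$ (again by the very definition of o-type), we have $G^o\neq\Z/2\Z$, so Proposition \ref{oagr} yields that $(G^o,\precsim)$ is an ordered abelian group.

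The only real work is thus to show that $G^o$ is an initial segment. To this end I would take $g\in G^o$ and $h\precsim g$, and aim to show $h\in G^o$ via the characterisation ``$g$ is o-type iff $g\precsim 0 \vee -g\precsim 0$'' of Proposition \ref{otypevtype}. The easy cases are immediate: if $h=0$, $g=0$, or $g\precsim 0$, then $h\precsim 0$ and we are done; symmetrically if $h\precsim 0$. The hard case is $g,h\neq 0$ with $0\precnsim h\precsim g$ and $0\precnsim g$ (forcing $-g\precnsim 0$). I would argue by contradiction: suppose $h$ is v-type, so that $h\sim -h$ by Proposition \ref{otypevtype}. Since $cl(g)=\{g\}$ and $h\neq g$, necessarily $h\precnsim g$. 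Applying $(Q_2)$ to this with addend $-h$ (valid since $h\sim -h$ and $g\nsim h$ together give $g\nsim -h$) produces $0\precnsim g-h$, and Lemma \ref{lemma1} then yields $g-h\sim g+h$.

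The decisive step is then a dichotomy on the type of $g+h$. If $g+h$ is v-type, Proposition \ref{otypevtype} gives $g+h\sim -(g+h)=-g-h$; applying $(Q_2)$ to $-g\precnsim 0$ with addend $-h$ produces $-g-h\precnsim -h\sim h$, whence $g+h\precnsim h$, while $(Q_2)$ applied to $0\precnsim g$ with addend $h$ produces $h\precsim g+h$, a contradiction. If instead $g+h$ is o-type, then $cl(g+h)=\{g+h\}$ combined with $g-h\sim g+h$ forces $g+h=g-h$, i.e.\ $2h=0$, hence $-h=h$. Two further applications of $(Q_2)$ to $0\precsim h$, first with addend $g-h$ and then with addend $g$ (each addend verified to be $\nsim h$, since otherwise one would force $g=0$ or $h=0$), yield $g-h\precsim g$ and $g\precsim g+h=g-h$; strictness in each follows from $cl(g)=\{g\}$, so $g-h\precnsim g\precnsim g-h$, again a contradiction. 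The main obstacle is purely book-keeping: one must arrange the chain of $(Q_2)$-applications so that each ``$\nsim$'' side-condition is verified, and track strict versus non-strict inequalities with care throughout.
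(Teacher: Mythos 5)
Your overall skeleton is the same as the paper's (prove the initial-segment property, then get the subgroup property from Lemma \ref{sousgroupes} and the ordered-group statement from Proposition \ref{oagr}/\ref{qoagotype}), and your Case 2 (where $g+h$ is o-type) is fine. But Case 1 contains a genuine gap: you claim that applying $(Q_2)$ to $-g\precnsim 0$ with addend $-h$ ``produces $-g-h\precnsim -h$''. Axiom $(Q_2)$ only yields the non-strict conclusion $-g-h\precsim -h$, and strictness is \emph{not} preserved in general: in any valuational q.o.\ with at least three values (e.g.\ $\Z^3$ with the min-support valuation, $x=(0,0,1)$, $y=(0,1,0)$, $z=(1,0,0)$) one has $x\precnsim y\nsim z$ but $x+z\sim y+z$. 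Worse, your application is exactly in the dangerous configuration: the addend $-h$ satisfies $-g\precnsim 0\precnsim h\sim -h$, i.e.\ it strictly dominates both sides, which is precisely when equality occurs in the valuational setting; and in Case 1 both $-(g+h)$ and $-h$ are v-type, so no appeal to $cl(g)=\{g\}$ (which is what legitimately gives you strictness in Case 2) is available. Without the strictness, your two inequalities only give $g+h\sim h$, which is not by itself a contradiction --- it is exactly the conclusion of the ultrametric inequality (Proposition \ref{ultrametric}) when the roles are reversed --- so Case 1 does not close as written.

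The gap is repairable within your framework, but it needs an extra argument rather than an appeal to $(Q_2)$: suppose $g+h\sim h$; since $h\precnsim g$ and $g+h\sim h\nsim g$, we get $g+h\precsim g$ with $g+h\nsim g$, and applying $(Q_2)$ to $g+h\precsim g$ with addend $-h$ (legitimate because $g\nsim -h$) gives $g\precsim g-h$; on the other hand Lemma \ref{lemma1} gives $g-h\sim g+h\sim h\precnsim g$, so $g-h\precnsim g$, contradicting $g\precsim g-h$. With this patch your proof works, and it is then a mild variant of the paper's argument: the paper avoids the type-of-$g+h$ dichotomy altogether by showing directly that every o-type $g$ satisfies $g\precsim g-h\sim h$ for v-type $h$, splitting instead on whether $h\sim g-h$, which keeps all strictness issues localized in the o-type element $g$.
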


   \begin{proof}
    Let $h\in G^v$ and $g\in G^o$. We want to show that $g\precnsim h$.
    If $h=g$ then $h$ would be v-type and o-type, which can only happen if $h=0$ which is
    excluded by definition of $G^v$, so $h\neq g$. Since $g$ is o-type this implies $g\nsim h$.
    By Lemma \ref{lemma1}, we then have $g+h\sim g-h$.
    Since $h$ is v-type then by Lemma \ref{otypevtype} $-h$ is also v-type so $-h\neq g$ and $0\precsim -h$, which by $(Q_2)$ implies $g\precsim g-h$.
    Assume $h\nsim g-h$. We can apply $(Q_2)$ to the previous inequality and get
    $g+h\precsim g$, so we have $g+h\precsim g\precsim g-h\sim g+h$, which 
 means $g\sim g+h$, which  contradicts the fact that $g$ is o-type.
 Thus, we have $g\precsim g-h\sim h$, which is what we wanted.  
 This shows that $G^o$ is an initial segment, and by Lemma \ref{sousgroupes} it follows that $G^o$ is a subgroup of 
 $G$. By Proposition \ref{qoagotype} $(G^o,\precsim)$ is then an ordered abelian group.
   \end{proof}

 The main property of $G^v$ is given by the ultrametric inequality satisfied by v-type elements:

\begin{Prop}[ultrametric inequality for v-type elements]\label{ultrametric}
 $G^v$ is a final segment of $G$. Moreover, for any $g\in G^v$ and $h\in G$ we have \newline
 $cl(g+h)\leq\max(cl(g),cl(h))$.
 If $h\precnsim g$ then $g\sim g+h$.
\end{Prop}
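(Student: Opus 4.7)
The first assertion is immediate from Proposition \ref{segin}: since $G^o$ is an initial segment of $G$, its complement $G^v$ is a final segment.

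I would prove the third assertion ``$h \precnsim g \Rightarrow g \sim g+h$'' next, since the ultrametric inequality will then follow by case analysis. The strategy is contradiction, with a preliminary observation: under the hypothesis, one also has $-h \precnsim g$. Indeed, if $h$ is v-type then $-h \sim h \precnsim g$; if $h$ is o-type, then $-h$ lies in $G^o$ (a subgroup by Proposition \ref{segin}), and since $G^o$ is an initial segment while $g\in G^v$ sits strictly above it, $-h \precnsim g$. In particular $g \nsim -h$, which is exactly the side condition making $-h$ a legal addend in axiom $(Q_2)$. Now assume for contradiction $cl(g+h) > cl(g)$. From $-g \sim g$ we get $-g \precnsim g+h$, and $-h \precnsim g \precnsim g+h$ gives $g+h \nsim -h$; applying $(Q_2)$ with $z = -h$ to the inequality $-g \precsim g+h$ yields $-(g+h) \precsim g$. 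But $g+h$ cannot be o-type (it would lie in $G^o$, hence be $\precnsim g$, contradicting $cl(g+h) > cl(g)$), so $g+h$ is v-type and $cl(g+h) = cl(-(g+h)) \leq cl(g)$, a contradiction. Conversely, assume $cl(g+h) < cl(g)$. Applying $(Q_2)$ to $g+h \precsim g$ with $z = -h$ (side condition $g \nsim -h$ holds) yields $g \precsim g-h$. On the other hand, Lemma \ref{lemma1} applied to the v-type element $g$ and $-h$ (with $-h \nsim g$) gives $g-h \sim -(g+h)$; if $g+h$ is v-type this forces $cl(g-h) = cl(g+h) < cl(g)$, contradicting $g \precsim g-h$, and if $g+h$ is o-type then $g-h$ lies in $G^o$, so $g \precsim g-h$ together with the initial-segment property of $G^o$ forces $g \in G^o$, again a contradiction.

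For the middle assertion, I would split according to the relation between $cl(h)$ and $cl(g)$. The case $cl(h) < cl(g)$ is exactly the third assertion just established, giving equality. The case $cl(h) > cl(g)$ forces $h \in G^v$ (else $h \in G^o$ would imply $h \precnsim g$ via the initial-segment property), and applying the third assertion with the roles of $g$ and $h$ swapped gives $cl(g+h) = cl(h)$. The boundary case $cl(h) = cl(g)$ forces $h$ to be v-type too (otherwise $h$ would be o-type with singleton class containing $g$, forcing $g \in G^o$); then the contradiction argument above applies essentially verbatim: supposing $cl(g+h) > cl(g)$, one has $-g \sim g \precnsim g+h$ and $-h \sim h \precnsim g+h$, so $(Q_2)$ with $z = -h$ yields $-(g+h) \precsim g$, and one concludes $cl(g+h) \leq cl(g)$ exactly as before.

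The main obstacle is not the overall structure of the argument but the careful bookkeeping of the $\nsim$-side condition in $(Q_2)$, which has to be checked at every application. The unifying trick is that for a v-type $g$ one has $g \sim -g$ (Proposition \ref{otypevtype}), so the hypothesis $h \precnsim g$ (or $h \sim g$) automatically propagates to $-h$, ensuring every $(Q_2)$-move with $z = -h$ is legal. Lemma \ref{lemma1} is needed exactly once, to relate $cl(g+h)$ and $cl(g-h)$ in the case where $cl(g+h) < cl(g)$ is assumed, since this relation cannot be extracted from $(Q_2)$ alone.
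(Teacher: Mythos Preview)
Your argument is correct, but it takes a genuinely different route from the paper.

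The paper's proof is structural rather than computational: since $g$ is v-type, both $G^{cl(g)}=\{x\mid x\precsim g\}$ and $G_{cl(g)}=\{x\mid x\precnsim g\}$ are initial segments containing $G^o$, so by Lemma~\ref{sousgroupes} they are subgroups of $G$. After reducing (by symmetry) to the case $h\precsim g$, the ultrametric inequality is then just closure of $G^{cl(g)}$ under addition, and the strict case $h\precnsim g\Rightarrow g+h\sim g$ is the observation that $h\in G_{cl(g)}$ while $g\notin G_{cl(g)}$, hence $g+h\in G^{cl(g)}\setminus G_{cl(g)}$. No contradiction argument, no case split on the type of $g+h$, and no appeal to Lemma~\ref{lemma1} is needed.

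What your approach buys is independence from Lemma~\ref{sousgroupes}: you work directly from $(Q_2)$ and the characterisation of v-type elements, which is instructive because it shows exactly where the side condition $y\nsim z$ bites. What the paper's approach buys is economy and a clearer explanation of \emph{why} the ultrametric inequality holds: it is a shadow of the fact that the sets $G^{\gamma}$ and $G_{\gamma}$ form a chain of subgroups once $\gamma$ lies above $G^o$, which is precisely the valuational structure being asserted. Your careful bookkeeping of the $\nsim$-hypothesis at each $(Q_2)$-application is the price paid for not invoking that subgroup lemma up front.
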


\begin{proof}
  The fact that $G^v$ is a final segment follows directly from Proposition \ref{segin}. Take $g\in G^v$ and 
  $h\in G$. We can assume that $h\precsim g$: otherwise we have $h\in G^v$ so we can exchange the roles of $g$ and $h$.
  By Proposition \ref{segin} $G^{cl(g)},G_{cl(g)}$ contain $G^o$. By Lemma 
  \ref{sousgroupes} it follows that they are subgroups of $G$. In particular, since $g,h\in G^{cl(g)}$ we have 
  $g+h\in G^{cl(g)}$, hence $g+h\precsim g$. If $h\precnsim g$ we even have $h\in G_{cl(g)}$ but 
  $g\in G^{cl(g)}\backslash G_{cl(g)}$ hence $g+h\in G^{cl(g)}\backslash G_{cl(g)}$ which means $g\sim g+h$.
 \end{proof}
 
 We can reformulate Proposition \ref{ultrametric} by saying that $\precsim$ behaves like a valuation on $G^v$:
 
 \begin{Prop}\label{valuation}
  Set $\Gamma^v:=cl(G^v)$ and take $\gamma_0\in\Gamma$ with $\gamma_0<\Gamma^v$. Let $\leq^{\ast}$ be the reverse 
  order of $\leq$ on $\Gamma_v\cup\{\gamma_0\}$. Define $v$ on $G$ by:
  \[v(g)=\left\{\begin{array}{cc}
                 cl(g) & \text{ if } g\in G^v\\
                 \gamma_0 & \text{ if } 0\neq g\in G^o\\
                 \infty & \text{ if } g=0
                \end{array}\right.\]
 Then $v:G\to (\Gamma_v\cup\{\gamma_0,\infty\},\leq^{\ast})$ is a valuation and we have 
 $g\precsim h\Leftrightarrow v(g)\geq v(h)$ for any $g,h\in G^v$.
 \end{Prop}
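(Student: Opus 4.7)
The plan is to verify the four axioms of a valuation for the map $v$ defined above, treating cases according to whether elements lie in $G^o$ or $G^v$. Most of the work is already done in Propositions \ref{segin} and \ref{ultrametric}; the proof is essentially a reformulation.

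First I would check that $v$ is well-defined (which is immediate since $G=G^o\sqcup G^v$ with $G^o\cap G^v=\{0\}$) and that axiom (ii), $v(g)=\infty\Leftrightarrow g=0$, follows directly from the definition together with axiom $(Q_1)$ (which ensures $0\neq g\in G^o$ really has $v(g)=\gamma_0\neq\infty$). Next, for axiom (iv), I would observe that if $g\in G^o$ then $-g\in G^o$ by Proposition \ref{segin}, so $v(-g)=\gamma_0=v(g)$, and if $g\in G^v$ then $-g\sim g$ by Proposition \ref{otypevtype}, so $cl(-g)=cl(g)$ and again $v(-g)=v(g)$.

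For the ultrametric inequality (iii), I would split into cases. If $g,h\in G^o$, then $g+h\in G^o$ by Proposition \ref{segin}, so $v(g+h)\in\{\gamma_0,\infty\}$, which is $\geq^{\ast}\gamma_0=\min^{\ast}(v(g),v(h))$ since $\gamma_0$ is the $\leq^{\ast}$-minimum on $\Gamma^v\cup\{\gamma_0\}$ (recall $\gamma_0<\Gamma^v$, so $\gamma_0$ is maximal in $\leq^{\ast}$; wait, that is the maximum, not the minimum — so $v(g+h)\geq^{\ast}\gamma_0$ trivially since $\gamma_0$ is $\leq^{\ast}$-largest among finite values, hence everything is $\leq^{\ast}\gamma_0$ except $\infty$). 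The mixed case $g\in G^o\setminus\{0\}$, $h\in G^v$ (or symmetric): here $\min^{\ast}(v(g),v(h))=v(g)=\gamma_0$ in $\leq^{\ast}$, and we need only $v(g+h)\geq^{\ast}\gamma_0$, which holds for all possible values of $v(g+h)$. Finally if $g,h\in G^v$, Proposition \ref{ultrametric} gives $cl(g+h)\leq\max(cl(g),cl(h))$, which translates to $v(g+h)\geq^{\ast}\min^{\ast}(v(g),v(h))$ when $g+h\in G^v$; if $g+h\in G^o$ then $v(g+h)\in\{\gamma_0,\infty\}$ and we use $\gamma_0<\Gamma^v$ as above.

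Finally, the equivalence $g\precsim h\Leftrightarrow v(g)\geq v(h)$ for $g,h\in G^v$ is just the definition: $v(g)=cl(g)$ and $v(h)=cl(h)$, and $\geq$ here means $\geq^{\ast}$, i.e., $\leq$ in the original order on $\Gamma$, which is exactly $g\precsim h$ by definition of the induced order on $G/\sim$. The main book-keeping obstacle is simply keeping the order reversal straight and handling the artificial value $\gamma_0$ correctly in the mixed cases; the mathematical content is already contained in the previous two propositions.
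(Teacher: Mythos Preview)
Your overall approach matches the paper's: reduce everything to Propositions~\ref{segin} and~\ref{ultrametric}, with the extra work of verifying axioms (ii) and (iv) that the paper leaves implicit. That part is fine.

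However, your mixed case in the ultrametric inequality contains a genuine error. With $g\in G^o\setminus\{0\}$ and $h\in G^v$ you write $\min^{\ast}(v(g),v(h))=v(g)=\gamma_0$; this is backwards. Since $\gamma_0<\Gamma^v$ in $\leq$, we have $\gamma_0>^{\ast}\Gamma^v$ in $\leq^{\ast}$, so $\gamma_0$ is the $\leq^{\ast}$-\emph{maximum} of the finite values (as you yourself observed a few lines earlier), and hence $\min^{\ast}(v(g),v(h))=v(h)\in\Gamma^v$, not $\gamma_0$. Your subsequent claim that ``$v(g+h)\geq^{\ast}\gamma_0$ holds for all possible values of $v(g+h)$'' is likewise false: only $\gamma_0$ and $\infty$ satisfy this. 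So as written, the mixed case is not established.

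The repair is exactly what the paper does: treat the mixed case together with the case $g,h\in G^v$ by invoking Proposition~\ref{ultrametric} whenever at least one of $g,h$ lies in $G^v$. Concretely, if $h\in G^v$ and $g\in G^o\setminus\{0\}$, Proposition~\ref{segin} gives $g\precnsim h$, and then Proposition~\ref{ultrametric} yields $g+h\sim h$, so $v(g+h)=cl(h)=v(h)=\min^{\ast}(v(g),v(h))$. Your own argument for the case $g,h\in G^v$ is correct and already uses this proposition; you just need to extend it to cover the mixed case as well.
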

\begin{proof}
 It suffices to show that $v(g+h)\geq \min(v(g),v(h))$ for any $g,h\in G$. If $g$ or $h$ is in $G^v$ this is 
 given by Proposition \ref{ultrametric}. If $g,h\in G^o$ this is given by the fact that $G^o$ is a subgroup of $G$.
\end{proof}

 As a special case of Proposition \ref{valuation} we have a v-type analog of Proposition \ref{qoagotype}:
 \begin{Prop}
  The  compatible q.o $\precsim$ is valuational if and only if every element of $G$ is v-type.
  In that case, the map 
  $cl:G\to \Gamma$ with reverse order on $\Gamma$ and with $\infty:=cl(0)$ is
  a valuation, and $\precsim$ is the q.o induced by 
  this valuation.
 \end{Prop}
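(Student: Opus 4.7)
The proof is a direct application of earlier results and splits naturally into the two implications.

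For the forward direction, I would assume $\precsim$ is induced by some valuation $w$ on $G$ and show every $g \in G$ is v-type. Since $0$ is v-type by convention, take $g \neq 0$. Axiom (iv) of valuations gives $w(g) = w(-g)$, which translates to $g \sim -g$ in the induced q.o. If $2g = 0$ then $g$ is v-type by definition. Otherwise $-g \neq g$ but $-g \in cl(g)$, so $\{g\} \subsetneq cl(g)$ and again $g$ is v-type.

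For the backward direction, assume every element of $G$ is v-type, so $G^v = G$ and $G^o = \{0\}$. Define $v : G \to \Gamma \cup \{\infty\}$ by $v(g) := cl(g)$ for $g \neq 0$ and $v(0) := \infty$, where $\Gamma$ is equipped with the reverse of $\leq$ and $\infty$ is declared maximal. I would then verify the four axioms of a valuation one by one: $\Gamma \cup \{\infty\}$ is totally ordered (as $\leq$ was a total order on $\Gamma$); $v(g) = \infty \Leftrightarrow g = 0$ is exactly $(Q_1)$; the ultrametric inequality $v(g+h) \geq \min(v(g), v(h))$ in the reverse ordering is precisely the statement $cl(g+h) \leq \max(cl(g), cl(h))$ given by Proposition \ref{ultrametric}; and finally $v(-g) = v(g)$ follows because every nonzero element is v-type, hence $g \sim -g$ by Proposition \ref{otypevtype}, which gives $cl(g) = cl(-g)$.

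It remains to check that the q.o induced by $v$ coincides with $\precsim$. This is immediate from the definitions: for $g, h \neq 0$, $g \precsim h \Leftrightarrow cl(g) \leq cl(h) \Leftrightarrow v(h) \leq^{\ast} v(g)$ under the reverse order, which is exactly $g$ being $\precsim_v$-below $h$. The cases involving $0$ are handled by $(Q_1)$ and the convention $v(0) = \infty$. Alternatively, one may simply invoke Proposition \ref{valuation} with $G^v = G \setminus \{0\}$, absorbing the auxiliary element $\gamma_0$ into $\infty$. There is no serious obstacle here; the proposition is essentially a restatement of Proposition \ref{ultrametric} combined with Proposition \ref{otypevtype} in the degenerate case $G^o = \{0\}$.
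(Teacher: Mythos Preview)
Your proof is correct and follows the same route as the paper, which presents this proposition as an immediate special case of Proposition~\ref{valuation} (with $G^o=\{0\}$, so the auxiliary point $\gamma_0$ is vacuous and the valuation reduces to $cl$ with reversed order). One tiny notational slip: since $G^v:=G\setminus G^o$ and $0\in G^o$, the hypothesis gives $G^v=G\setminus\{0\}$ rather than $G^v=G$, as you yourself correctly write a few lines later.
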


 \subsection{Quasi-order induced on a quotient}
 
 It is known that if $(G,\leq)$ is an ordered abelian group and if $H$ is a convex subgroup, then $\leq$ 
 naturally induces an order on the quotient $G/H$ (see \cite{Fuchs}). We now show that the same is true in the case of compatible
 q.o.a.g's, which will allow us to give a more elegant formulation of Proposition \ref{valuation}.
 We start by describing convex subgroups:
 
 \begin{Prop}\label{convex}
  Let $H$ be a convex subgroup of $G$. Then either $H\subseteq G^o$ or 
  $G^o\subseteq H$. If the latter holds, then $H$ is an initial segment.
 \end{Prop}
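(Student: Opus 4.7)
My plan is to run a short case analysis driven entirely by Proposition~\ref{segin} (which gives $G^o$ as an initial segment and a subgroup) and Proposition~\ref{otypevtype} (which locates o-type elements on exactly one side of $0$), together with the convexity of $H$.

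For the dichotomy, I will contrapose: assume $H\not\subseteq G^o$ and pick $h_0\in H\cap G^v$; I want to show $G^o\subseteq H$. Let $g\in G^o$. If $g=0$, then $g\in H$ since $H$ is a subgroup. Otherwise, since $G^o$ is a subgroup, both $g$ and $-g$ lie in $G^o$, and Proposition~\ref{otypevtype} says that exactly one of them satisfies $0\precsim\cdot\,$; replacing $g$ by $-g$ if necessary, I may assume $0\precsim g$. Since $G^o$ is an initial segment and $h_0\in G^v=G\setminus G^o$, we cannot have $h_0\precsim g$ (that would force $h_0\in G^o$), so by totality $g\precsim h_0$, and in fact $g\precnsim h_0$ because $g\sim h_0$ would give $h_0\in cl(g)=\{g\}$, contradiction. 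Now $0,h_0\in H$ with $0\precsim g\precsim h_0$, and convexity of $H$ yields $g\in H$.

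For the second statement, assume $G^o\subseteq H$ and take $a\in G$, $h\in H$ with $a\precsim h$. If $a\in G^o$, then $a\in G^o\subseteq H$ immediately. Otherwise $a\in G^v$; then $h$ itself must lie in $G^v$, for otherwise $a\precsim h\in G^o$ and the initial segment property of $G^o$ would place $a$ in $G^o$. Proposition~\ref{otypevtype} gives $0\precsim a$, so $0\precsim a\precsim h$ with $0,h\in H$, and convexity yields $a\in H$. The only real bookkeeping is the sign case for $g$ in the first part; everything else is a direct application of what has already been proved, so I do not expect any serious obstacle.
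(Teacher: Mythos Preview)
Your proof is correct and follows essentially the same approach as the paper. The only cosmetic differences are that the paper contraposes in the opposite direction for the dichotomy (assuming $G^o\not\subseteq H$ and concluding $H\subseteq G^o$, rather than your choice), and your observation that $h\in G^v$ in the second part is true but unused---the paper goes straight from $a\in G^v$ to $0\precsim a\precsim h$ via Proposition~\ref{otypevtype} and invokes convexity, exactly as you do.
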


 \begin{proof}
  Assume $G^o$ is not contained in $H$, so there exists some o-type element $g$ with $g\notin H$. 
  Without loss of generality, we can assume $0\precsim g$.
   By convexity of $H$, we have $H\precnsim g$, which by Proposition \ref{segin} implies that every element of $H$ is
   o-type.
   Now assume $G^o\subseteq H$ and take $h\in H$ and $g\in G$ with $g\precsim h$. 
   If $g\in G^o$ then $g\in H$ by assumption; if $g\notin G^o$, then by Lemma \ref{otypevtype} we have 
   $0\precsim g\precsim h$ hence $g\in H$ by convexity of $H$.
   This shows that $H$ is an initial segment.
 \end{proof}

  We will need the following lemma to define a q.o on quotients:
 \begin{Lem}\label{lemmeconvex}
  Let $H$ be a convex subgroup of $G$ and take $g\in G\backslash H$. Then we either have 
  $g\precnsim H$ or $H\precnsim g$; moreover,
  $g\precnsim H$ if and only if $g+h\precnsim H$ for all $h\in H$ (equivalently:
  $H\precnsim g$ if and only if $H\precnsim g+h$ for every $h\in H$).
 \end{Lem}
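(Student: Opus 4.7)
The plan is to break the lemma into two pieces, first the dichotomy and then the invariance under translation by elements of $H$.

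For the dichotomy, I would argue as follows. Fix $g\in G\setminus H$. First observe that $g\nsim h$ for every $h\in H$: otherwise $h\precsim g\precsim h$ and the convexity of $H$ would force $g\in H$. Now, using totality, every $h\in H$ satisfies either $g\precnsim h$ or $h\precnsim g$. If both alternatives occurred simultaneously, say $h_{1}\precnsim g\precnsim h_{2}$ with $h_{1},h_{2}\in H$, then again convexity would put $g$ inside $H$, a contradiction. Hence exactly one of the two strict inequalities holds uniformly for all $h\in H$, which is precisely the statement that either $g\precnsim H$ or $H\precnsim g$.

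For the equivalence, the direction $(\Leftarrow)$ is immediate by specializing to $h=0$, so the content is in $(\Rightarrow)$. Assume $g\precnsim H$ and fix $h\in H$. Since $H$ is a subgroup and $g\notin H$, we have $g+h\notin H$, so by the dichotomy just proved either $g+h\precnsim H$ or $H\precnsim g+h$. I would proceed by contradiction and suppose the latter holds. Then in particular $h\precnsim g+h$; moreover $-h\in H$ while $g+h\notin H$, so by the same convexity argument as above $g+h\nsim -h$. This is exactly the hypothesis needed to apply axiom $(Q_{2})$ with $x=h$, $y=g+h$, $z=-h$, which yields $0=h+(-h)\precsim (g+h)+(-h)=g$. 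Combined with $g\precnsim 0$ (which follows from $0\in H$ and the assumption $g\precnsim H$), this gives $g\sim 0$, and then $(Q_{1})$ forces $g=0\in H$, contradicting $g\notin H$. Hence $g+h\precnsim H$. The parenthetical equivalent formulation for $H\precnsim g$ is then automatic, since the roles of the two alternatives in the dichotomy are completely symmetric.

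I do not expect any genuine obstacle here; the only delicate point is choosing the right instance of $(Q_{2})$ (which requires the side condition $y\nsim z$ to be verified, and this is exactly where the convexity of $H$ and the fact that $-h\in H$ but $g+h\notin H$ come in). Everything else is a direct manipulation of the axioms and of the convexity hypothesis, with no need for the finer structure results about $G^{o}$ and $G^{v}$ (Propositions \ref{segin}, \ref{ultrametric}, or \ref{convex}).
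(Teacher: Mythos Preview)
Your proof is correct and follows essentially the same approach as the paper: convexity of $H$ yields the dichotomy, and axiom $(Q_2)$ handles the translation invariance. The paper's argument for the forward implication is marginally more direct --- from $g\precnsim 0$ it applies $(Q_2)$ with $z=h$ (using $0\nsim h$ when $h\neq 0$) to obtain $g+h\precsim h$ immediately, whereas you reach the same conclusion by contradiction --- but your route is equally valid.
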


 \begin{proof}
  The fact that we have $g\precnsim H\vee H\precnsim g$ is a direct consequence of the convexity of $H$.
  If $g\precnsim H$ then in particular $g\precnsim 0$ which by $(Q_2)$ implies $g+h\precsim h$ and since $g+h\notin H$ this implies 
  $g+h\precnsim H$.
 \end{proof}

 \begin{Prop}\label{convexsubgroupProp}
  Let $H$ be a convex subgroup of $G$. Then $H$ satisfies the condition of Lemma \ref{quotient}. Moreover, 
  the q.o induced on $G/H$ is again compatible, and the canonical projection from $G$ to $G/H$ is a homomorphism of q.o groups.
 \end{Prop}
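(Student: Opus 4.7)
The proposition has three parts: that $H$ satisfies the hypothesis of Lemma~\ref{quotient}, that the induced q.o on $G/H$ is compatible, and that the canonical projection $\pi:G\to G/H$ is a q.o homomorphism. The technical heart is the first part; the other two are cheap once it is in hand, so I would concentrate on it first.

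For the condition of Lemma~\ref{quotient}, I would fix $g_1,g_2\in G$ with $g_1-g_2\notin H$ and $g_1\precsim g_2$, and arbitrary $h_1,h_2\in H$, and show $g_1+h_1\precsim g_2+h_2$. Proposition~\ref{convex} splits the analysis into two (possibly overlapping) alternatives: either $G^o\subseteq H$, in which case $H$ is an initial segment, or $H\subseteq G^o$. I would treat these alternatives separately, each time subdividing by the types of $g_1$ and $g_2$. In the first alternative, $g_2\notin H$ (else $g_1\precsim g_2$ with $g_2\in H$ and $H$ initial would force $g_1\in H$, contradicting $g_1-g_2\notin H$). If moreover $g_1\in H$, then Lemma~\ref{lemmeconvex} yields $g_2+h_2\notin H$ while $g_1+h_1\in H$, whence $g_1+h_1\precnsim g_2+h_2$ by initiality. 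If instead $g_1\notin H$, then both $g_1,g_2$ are v-type and satisfy $h_i\precnsim g_i$, so Proposition~\ref{ultrametric} gives $g_i+h_i\sim g_i$ and $g_1\precsim g_2$ transfers to $g_1+h_1\precsim g_2+h_2$.

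In the second alternative, $H\subseteq G^o$, I would again split on types. The combination $g_1\in G^v,\ g_2\in G^o$ is ruled out by Proposition~\ref{segin}. If both $g_i$ are v-type, the ultrametric argument of the previous case applies verbatim. If $g_1$ is o-type and $g_2$ is v-type, then $g_1+h_1\in G^o$ while $g_2+h_2\sim g_2\in G^v$, so Proposition~\ref{segin} yields $g_1+h_1\precnsim g_2+h_2$. The remaining subcase---both $g_i$ o-type with $G^o\neq\Z/2\Z$---is the main obstacle: Proposition~\ref{oagr} provides (OG) on $G^o$, so $g_1\precsim g_2$ gives $0\precnsim g_2-g_1$; the convexity of $H$ in $G^o$ combined with $g_2-g_1\notin H$ then forces $g_2-g_1\succnsim h$ for every $h\in H$, and specialising to $h=h_1-h_2$ and translating by (OG) yields $g_1+h_1\precnsim g_2+h_2$. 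The degenerate case $G^o=\Z/2\Z$ reduces to $H=\{0\}$ (trivial) or $H=G^o$ (already covered by the first alternative).

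The two remaining assertions are then quick. For $(Q_1)$: if $\bar g\sim\bar 0$ with $g\notin H$, unpacking the definition forces $g\sim 0$ in $G$, hence $g=0\in H$ by $(Q_1)$ in $G$, a contradiction; so $g\in H$ and $\bar g=\bar 0$. For $(Q_2)$: one checks that $\bar y\nsim\bar z$ is equivalent to $y-z\notin H$ together with $y\nsim z$ in $G$; in the case $x-y\in H$ the conclusion $\overline{x+z}\precsim\overline{y+z}$ is automatic because $(x+z)-(y+z)\in H$, and in the case $x-y\notin H$ we have $x\precsim y$, so $(Q_2)$ in $G$ applied to $x\precsim y\nsim z$ gives $x+z\precsim y+z$, which lifts to $\overline{x+z}\precsim\overline{y+z}$ via the second disjunct of the definition. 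Finally, $\pi$ being a q.o homomorphism is a direct reading of the definition of $\precsim$ on $G/H$: either $g_1-g_2\in H$ or $g_1-g_2\notin H$ with $g_1\precsim g_2$, and both cases immediately give $\bar g_1\precsim\bar g_2$.
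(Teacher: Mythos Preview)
Your proof is correct. The organization differs from the paper's: you split first on the dichotomy of Proposition~\ref{convex} (either $G^o\subseteq H$ or $H\subseteq G^o$) and then on the types of $g_1,g_2$, whereas the paper first disposes of the boundary cases $g_1\in H$ or $g_2\in H$, then assumes both lie outside $H$ and splits instead on the sign of $g_1-g_2$. In the paper's decomposition, the case $g_1-g_2\precsim 0$ is handled uniformly: Lemma~\ref{lemmeconvex} gives $g_1-g_2+h_1\precnsim h_2$, and then a single application of $(Q_2)$ (using $g_2\nsim h_2$) yields $g_1+h_1\precsim g_2+h_2$, with no need to know whether $H$ sits inside $G^o$ or the other way round; only the residual case $0\precnsim g_1-g_2$ forces $g_2$ to be v-type and invokes the ultrametric argument. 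Your approach trades this economy for a more mechanical case enumeration, which is perfectly fine and arguably easier to verify line by line. One small remark: your worry about $G^o=\Z/2\Z$ is unnecessary, since by the very definition of o-type no nonzero element of $G^o$ has order~$2$; thus Proposition~\ref{segin} already guarantees that $(G^o,\precsim)$ is a genuine ordered abelian group and (OG) holds there without exception. Your treatment of $(Q_1)$, $(Q_2)$ on the quotient, and of $\pi$ being a q.o homomorphism, matches the paper's and is correct.
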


 \begin{proof}
  
   We first prove that the condition of Lemma \ref{quotient} is satisfied.
   Let $g_1,g_2\in G$ with 
  $g_1-g_2\notin H$ and $g_1\precsim g_2$ and let $h_1,h_2\in H$. We want to show that $g_1+h_1\precsim g_2+h_2$.
  If $g_1\in H$ then $g_2\notin H$ and by convexity of $H$ we have $H\precnsim g_2$ which by 
  lemma \ref{lemmeconvex} implies $g_1+h_1\precnsim g_2+h_2$. The case where $g_2\in H$ is similar,
  so we can assume that 
  $g_1,g_2$ are not in $H$.  We first consider the case $g_1-g_2\precsim 0$. If $g_1-g_2\precsim 0$, then $g_1-g_2\precnsim H$ so 
  by lemma \ref{lemmeconvex} we have $g_1-g_2+h_1\precsim h_2$, and since $g_2\notin H$ we have $g_2\nsim h_2$ so
  $(Q_2)$ implies $g_1+h_1\precsim g_2+h_2$. 
  We now consider the case $0\precnsim g_1-g_2$. In this case,  $g_2$ must be v-type (otherwise we would have a contradiction 
  with $g_1\precsim g_2$),
  which by Proposition \ref{ultrametric} implies $g_2\sim g_2+h_2\in G^v$. 
  If $g_1$ is o-type then by Proposition 
  \ref{convex} $h_1$ is also o-type so $g_1+h_1\in G^o$ hence $g_1+h_1\precsim g_2+h_2$ by Proposition \ref{segin}; if 
  $g_1$ is v-type then $g_1+h_1\sim g_1$ by Proposition \ref{ultrametric} hence $g_1+h_1\precsim g_1+h_2$.

  This proves that the condition of Lemma \ref{quotient} is satisfied, and it then follows directly from 
  Lemma \ref{quotient} that $\precsim$ induces a q.o on $G/H$ via the formula also given in Lemma \ref{quotient}.  
  It is clear from the definition of the induced q.o on $G/H$ that the canonical projection is a homomorphism of 
  q.o groups.
  Now let us prove that this q.o is compatible with $+$.
   It is clear from the definition of the induced q.o that, if $g\notin H$, then   
   $g+H\nsim 0+H$, which shows that $(G/H,\precsim)$ satisfies $(Q_1)$.
  Now assume that $g+H\precsim h+H\nsim f+H$. If $g-h\notin H$ then $g\precsim h$. Since $h+H\nsim f+H$ we have 
  $h\nsim f$, hence $g+f\precsim h+f$, hence $g+f+H\precsim h+f+H$. If $g-h\in H$ then $g+f-(h+f)\in H$ hence
  $g+f+H\precsim h+f+H$. This proves $(Q_2)$. 
 \end{proof}
 \begin{Rem}\label{remarkonquotientqo}\label{sameHclassareequivalent}
   \begin{enumerate}[(1)]
    \item If $(G,\precsim)$ is an ordered abelian group then this is the definition of the order induced on $G/H$
  (see \cite{Fuchs}), which is why it is natural to define the q.o on the quotient by the formula given in Lemma \ref{quotient}.
    \item If $H$ has a group complement $F$ in $G$, then $F$ is canonically isomorphic to $G/H$, and it is then easy to see that 
    the q.o induced by $\precsim$ on the quotient $G/H$ coincides with the restriction of $\precsim$ to $F$.
    \item If $G^o\subseteq H$ and $g\notin H$, then for any $h\in G$, 
    $g-h\in H$ implies $g\sim h$ (indeed, we have $g\in G^v$, and by Proposition \ref{convex} we have $h-g\precnsim g$. It then follows from 
    \ref{ultrametric} that $g\sim g+h-g=h$)
    \item  It is clear from the definition of the induced q.o that 
    $g\precsim h\Rightarrow g+H\precsim h+H$ and $g+H\precnsim h+H\Rightarrow g\precnsim h$ hold for all $g,h\in G$. It also follows from the previous remark that
    $g\precsim h\Leftrightarrow g+H\precsim h+H$ and $g+H\precnsim h+H\Leftrightarrow g\precnsim h$
    is true when $G^o\subseteq H$ and $h\notin H$. This remark will be useful for later proofs.
   \end{enumerate}

 \end{Rem}

 As we noted in Section \ref{prelsection}, a bijective homomorphism is in general not an isomorphism. A consequence 
 of this is that there is no equivalent of the fundamental homomorphism theorem of groups, i.e a 
 homomorphism of q.o groups is not always the product of a projection by an embedding. However, we can say the 
 following:
 
 \begin{Prop}
  Let $(G,\precsim_G)$ and $(H,\precsim_H)$ be two compatible q.o groups,
  $\phi:G\to H$ a homomorphism of q.o groups and $\pi$ the canonical projection from $G$ to $G/\ker\phi$. Then $\ker\phi$ is convex in $G$, so
  $\precsim_G$ induces a compatible q.o
  on $G/\ker\phi$. 
  Moreover, the map $G/\ker\phi\to H, g+\ker\phi\mapsto \phi(g)$ is an injective homomorphism of 
  q.o groups. If moreover $\phi$ satisfies $\phi(g)\precsim_H\phi(h)\Rightarrow g\precsim_Gh$ for every $g,h\in G$, then 
  $g+\ker\phi\mapsto\phi(g)$ is an embedding of q.o groups.
 \end{Prop}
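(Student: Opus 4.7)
The plan is to first establish convexity of $\ker\phi$, then invoke Proposition \ref{convexsubgroupProp} to get the induced compatible q.o on $G/\ker\phi$, and finally verify the universal-property style factorization through the kernel.

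First I would show that $\ker\phi$ is $\precsim_G$-convex. Let $k_1,k_2\in\ker\phi$ and $g\in G$ with $k_1\precsim_G g\precsim_G k_2$. Since $\phi$ is a homomorphism of q.o groups, applying $\phi$ gives $0=\phi(k_1)\precsim_H \phi(g)\precsim_H \phi(k_2)=0$, hence $\phi(g)\sim_H 0$, and axiom $(Q_1)$ for $(H,\precsim_H)$ forces $\phi(g)=0$, i.e. $g\in\ker\phi$. Convexity of $\ker\phi$ in hand, Proposition \ref{convexsubgroupProp} immediately tells us that $\precsim_G$ descends to a compatible q.o on $G/\ker\phi$ given by the formula of Lemma \ref{quotient}, and that $\pi:G\to G/\ker\phi$ is a homomorphism of q.o groups.

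Next I would define $\bar\phi:G/\ker\phi\to H$ by $\bar\phi(g+\ker\phi):=\phi(g)$. Well-definedness and the fact that it is an injective group homomorphism are standard consequences of the first isomorphism theorem (with $\ker\phi$ being the kernel of $\phi$). To see that $\bar\phi$ preserves the quasi-order, suppose $g+\ker\phi\precsim g'+\ker\phi$ in the induced q.o. By the defining formula for the quotient q.o, either $g-g'\in\ker\phi$, whence $\phi(g)=\phi(g')$ and trivially $\bar\phi(g+\ker\phi)\precsim_H\bar\phi(g'+\ker\phi)$; or $g-g'\notin\ker\phi$ together with $g\precsim_G g'$, and then $\phi$ being a homomorphism of q.o groups yields $\phi(g)\precsim_H\phi(g')$, as required.

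For the embedding claim, assume in addition that $\phi(g)\precsim_H\phi(h)\Rightarrow g\precsim_G h$. Suppose $\phi(g)\precsim_H\phi(g')$; I must show $g+\ker\phi\precsim g'+\ker\phi$. If $g-g'\in\ker\phi$, the conclusion holds by the first clause of the definition. Otherwise the hypothesis gives $g\precsim_G g'$ directly, and since also $g-g'\notin\ker\phi$, the second clause of the defining formula yields $g+\ker\phi\precsim g'+\ker\phi$. Combined with the already established preservation of $\precsim$, this shows $\bar\phi$ is an embedding of q.o groups. The only mildly delicate point is paying attention to which direction of implication the defining formula of the quotient q.o provides in each case; once the two clauses ``$g-g'\in\ker\phi$'' and ``$g-g'\notin\ker\phi\wedge g\precsim_G g'$'' are handled separately, the argument is entirely mechanical.
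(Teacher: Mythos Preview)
Your proof is correct and follows essentially the same approach as the paper: establish convexity of $\ker\phi$ via $(Q_1)$, invoke Proposition~\ref{convexsubgroupProp}, then verify preservation of the q.o by case analysis on the two clauses of the quotient formula from Lemma~\ref{quotient}. The only cosmetic difference is that in the embedding step the paper uses directly that $g\precsim_G g'$ implies $g+\ker\phi\precsim g'+\ker\phi$ (since $\pi$ is a q.o homomorphism), whereas you redo the case split; both are fine.
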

 
 \begin{proof} 
   Let $g,f\in \ker\phi$ and $h\in G$ with $f\precsim_G h\precsim_G g$. Then $\phi(f)=0\precsim_H\phi(h)\precsim_H \phi(g)=0$. By 
   $(Q_1)$, it follows that $h\in\ker\phi$. This proves that $\ker\phi$ is convex, and so by Proposition \ref{convexsubgroupProp}
   $\precsim_G$ induces a q.o on $G/\ker\phi$ via the formula given in Lemma \ref{quotient}.  
   We know from general group theory that the map given by the formula $\psi(g+\ker\phi):=\phi(g)$ is a well-defined 
   injective group homomorphism from $G/\ker\phi$ to $H$. Now let $g,h\in G$ such that 
   $g+\ker\phi\precsim_G h+\ker\phi$. If $g-h\in \ker\phi$ then $\psi(g+H)=\psi(h+H)$. If 
   $g-h\notin H$ then $g\precsim_G h$, and since $\phi$ preserves the q.o it follows that 
   $\psi(g+H)\precsim_H\psi(h+H)$. In any case, we have $\psi(g+H)\precsim_H\psi(h+H)$. 
    Now assume that $\phi$ satisfies $\phi(g)\precsim_H\phi(h)\Rightarrow g\precsim_Gh$ and assume that 
   $\psi(g+H)\precsim\psi(h+H)$ holds. Then we have $g\precsim_Gh$, which by definition of the induced q.o on the quotient 
   implies $g+\ker\phi\precsim_Gh+\ker\phi$.
 \end{proof}

 Proposition \ref{convexsubgroupProp} allows us to reformulate Proposition \ref{valuation}:
 
 \begin{Prop}\label{quotientGo}
  Let $H$ be a convex subgroup of $G$. The induced q.o on $G/H$ is valuational if and only if 
  $G^o\subseteq H$. In particular, $G^o$ is the smallest convex subgroup of $G$ such that  the induced q.o on 
  $G/G^o$ is valuational.
 \end{Prop}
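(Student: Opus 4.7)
The plan is to reduce the statement to checking when every nonzero coset $g+H$ is v-type in $G/H$, using the already-established criterion that a compatible q.o is valuational precisely when every element is v-type. By Proposition \ref{convex} the possible convex subgroups split into $H\subseteq G^o$ and $G^o\subseteq H$, so the work naturally divides along these two cases, and throughout I would use the explicit formula for the induced q.o given in Lemma \ref{quotient}.

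For the direction ``$G^o\subseteq H$ implies the induced q.o on $G/H$ is valuational'', I take any coset $g+H\neq 0+H$. Since $g\notin H\supseteq G^o$, $g$ is v-type in $G$, so $g\sim -g$. If $2g\in H$, then $g+H=-g+H$ and $g+H$ is trivially v-type. Otherwise the formula of Lemma \ref{quotient} gives $g+H\precsim -g+H\Leftrightarrow g\precsim -g$, and applying the same with the roles of $g$ and $-g$ swapped, $-g+H\precsim g+H\Leftrightarrow -g\precsim g$; both hold because $g\sim -g$, so $g+H\sim -g+H$ and $g+H$ is v-type.

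For the converse I argue the contrapositive: assume $G^o\not\subseteq H$, so by Proposition \ref{convex} $H\subsetneq G^o$. The goal is to exhibit an o-type element in $G/H$. Pick $g\in G^o\setminus H$, replacing $g$ by $-g$ if necessary so that $0\prec g$ in the ordered group $G^o$. The crucial technical step is to show $2g\notin H$: in $G^o$ one has $0\prec g\prec 2g$, hence $0\precsim g\precsim 2g$ in $G$, so if $2g$ were in $H$, convexity of $H$ in $G$ would force $g\in H$, contradicting our choice of $g$. With $2g\notin H$ the class $g+H$ has order different from $2$, and Lemma \ref{quotient} gives $g+H\precsim -g+H\Leftrightarrow g\precsim -g$; since $g\in G^o\setminus\{0\}$ is o-type in $G$ we have $g\nsim -g$, so $g+H\nsim -g+H$. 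By Proposition \ref{otypevtype} this makes $g+H$ o-type in $G/H$, so the induced q.o on $G/H$ is not valuational.

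The ``in particular'' clause is then immediate: by Proposition \ref{segin} $G^o$ is itself a convex subgroup of $G$ (being both a subgroup and an initial segment), so applying the equivalence with $H=G^o$ yields that the induced q.o on $G/G^o$ is valuational, and the minimality of $G^o$ among such convex subgroups is exactly the ``only if'' direction just proved. I expect the main obstacle to be the convexity argument ruling out $2g\in H$ in the contrapositive direction; the rest is a direct manipulation of the formula from Lemma \ref{quotient} together with the o-type/v-type characterisations from Proposition \ref{otypevtype}.
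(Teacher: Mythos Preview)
Your proof is correct and follows essentially the same route as the paper: show that the quotient is valuational precisely when every nonzero coset is v-type, then exhibit an o-type coset when $H\subsetneq G^o$. The paper's proof is much terser (it simply invokes Proposition \ref{ultrametric} for the forward direction and says ``we can easily see that $g+H$ is o-type'' for the converse), whereas you spell out the details --- in particular the convexity argument ruling out $2g\in H$, which the paper suppresses entirely. The only cosmetic difference is that you deduce $g+H\sim -(g+H)$ directly from Proposition \ref{otypevtype} and the formula of Lemma \ref{quotient}, while the paper gestures toward the ultrametric inequality; both are straightforward.
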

\begin{proof}
 If $G^o\subseteq H$, then it follows from proposition \ref{ultrametric} that $\precsim$ is valuational on $G/H$.
 If $H\varsubsetneq G^o$ then there is $g\in G^o\backslash H$ and we can easily see that $g+H$ is o-type in $G/H$ so
 $G/H$ cannot be valuational.
\end{proof}

  Propositions \ref{segin} and \ref{quotientGo} show that $(G,\precsim)$ is an extension of a valued group by an 
  ordered group. We now define the \textbf{ordered part} of $(G,\precsim)$ as the ordered group 
  $(G^o,\precsim)$ and the \textbf{valued part} of $(G,\precsim)$ as the valued group 
  $(G^o/G,v)$, where $v$ is the valuation corresponding to the q.o induced by $\precsim$ on 
  $G^o/G^v$. We will now express $\precsim$ with a formula in which the order of its ordered part and the 
  valuation of its valued part explicitly appear.

   \begin{Prop}\label{formulafortheqo}
    If $\leq_o$ denotes the restriction of $\precsim$ to $G^o$ and $v$ the valuation corresponding to the q.o induced by 
    $\precsim$ on $G/G^o$, then $\precsim$ is given by the following formula for all $g,h\in G$:\newline
    $g\precsim h\Leftrightarrow (g,h\in G^o\wedge g\leq_o h)\vee(h\in G^v\wedge v(g+G^o)\geq v(h+G^o))$
   \end{Prop}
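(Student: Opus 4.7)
The plan is to verify the equivalence by a four-way case split on the location of $g$ and $h$ in the partition $G=G^o\sqcup G^v$, invoking the structural results already established (Proposition \ref{segin}, Proposition \ref{ultrametric}, Proposition \ref{convexsubgroupProp}, Proposition \ref{quotientGo} and Remark \ref{remarkonquotientqo}).

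First, suppose $g,h\in G^o$. Then by definition $\leq_o$ is the restriction of $\precsim$ to $G^o$, so $g\precsim h\Leftrightarrow g\leq_oh$, and both clauses on the right reduce to this. Second, suppose $g\in G^o$ and $h\in G^v$. By Proposition \ref{segin}, $G^o$ is an initial segment of $G$, so $g\precnsim h$, and thus the left-hand side holds. On the right, the first disjunct fails (since $h\notin G^o$); for the second, $g+G^o=0+G^o$, so $v(g+G^o)=\infty\geq v(h+G^o)$, and $h\in G^v$, so the disjunct holds. Third, suppose $g\in G^v$ and $h\in G^o$. Then by Proposition \ref{segin} again, $h\precnsim g$, so $g\precsim h$ is false, and both disjuncts on the right fail (the first because $g\notin G^o$, the second because $h\notin G^v$).

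The interesting case is the fourth one, where $g,h\in G^v$. Here the condition $h\in G^v$ is satisfied, so we must show $g\precsim h\Leftrightarrow v(g+G^o)\geq v(h+G^o)$. By Proposition \ref{quotientGo}, the induced q.o on $G/G^o$ is valuational, and by construction the associated valuation is $v$, so $g+G^o\precsim h+G^o\Leftrightarrow v(g+G^o)\geq v(h+G^o)$. It thus suffices to show $g\precsim h\Leftrightarrow g+G^o\precsim h+G^o$. If $g-h\in G^o$ then $g+G^o=h+G^o$, and the right side is trivially true; moreover, by Remark \ref{remarkonquotientqo}(3) we have $g\sim h$ in $G$, so the left side holds as well. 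If $g-h\notin G^o$, then Remark \ref{remarkonquotientqo}(4) (applied with $H=G^o$ and $h\notin H$) gives $g\precsim h\Leftrightarrow g+G^o\precsim h+G^o$, as required.

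The argument is essentially bookkeeping: the only real subtlety is handling the boundary situation $g-h\in G^o$ in Case 4, where one must distinguish between the quotient identification and the q.o on $G$ itself; Remark \ref{remarkonquotientqo}(3) is exactly the tool that resolves this. Combining the four cases yields the claimed formula.
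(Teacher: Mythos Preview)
Your proof is correct and follows essentially the same route as the paper's: both use Proposition~\ref{segin} to handle the mixed cases, the definition of the induced q.o on $G/G^o$ for the valuational comparison, and Remark~\ref{remarkonquotientqo}(3) to resolve the borderline situation $g-h\in G^o$ with $h\in G^v$. The only difference is organizational: you do an explicit four-way case split on the location of $g$ and $h$, whereas the paper proves the two implications separately and splits according to whether $h\in G^v$ and which disjunct holds; the underlying ingredients are identical.
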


   \begin{proof}
    Denote by $\etoile$ the q.o given by the formula 
    $g\etoile h \Leftrightarrow (g,h\in G^o\wedge g\leq_o h)\vee(h\in G^v\wedge v(g+G^o)\geq v(h+G^o))$. We show that $\etoile$ coincides 
    with $\precsim$.
    Assume $g\precsim h$. Then $g+G^o\precsim h+G^o$ holds by definition of the induced q.o on $G/G^o$. If $h\in G^v$, it
    directly follows from the definition of $\etoile$ that $g\etoile h$. If $h\notin G^v$, then 
    by \ref{segin} we must have $g\in G^o$, and by definition of $\leq_o$, $g\precsim h$ then implies $g\leq_oh$, which by definition of 
    $\etoile$ implies  
    $g\etoile h$.
    Conversely, assume $g\etoile h$. If $g,h\in G^o$ then $g\leq_o h$ which by definition of $\leq_o$ implies 
    $g\precsim h$. Assume then that $(h\in G^v\wedge v(g+G^o)\geq v(h+G^o))$ holds. 
    If $g-h\in G^o$, then Remark \ref{sameHclassareequivalent}(3) implies $g\sim h$ so $g\precsim h$. Otherwise, we have
     $g\precsim h$ by definition of the q.o induced on $G/G^o$. 
   \end{proof}

 \subsection{Structure theorems}
 We can summarize previous results into the following theorem which gives the structure of a compatible 
 q.o.a.g:
 
 \begin{Thm}[structure theorem]\label{structure}
  Let $(G,\precsim)$ be a q.o.a.g.
  Then $\precsim$ is compatible with $+$ if and only if $G$ admits a subgroup $H$ satisfying the following properties:
 \begin{enumerate}[(1)]
     \item $H$ is an initial segment of $G$.
   \item $(H,\precsim)$ is an ordered abelian group.
   \item There exists a valuation $v$ on $G$ such that $v(H)>v(G\backslash H)$ and 
   $g\precsim h\Leftrightarrow v(g)\geq v(h)$ for every $g,h$ in  $G\backslash H$.
   \end{enumerate}

 \end{Thm}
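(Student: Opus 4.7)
The plan is to prove the biconditional by establishing the two implications separately, with nearly all of the work concentrated in the converse direction.

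The forward direction $(\Rightarrow)$ is essentially a collection of earlier results: I would take $H := G^o$. Proposition \ref{segin} yields simultaneously that $H$ is an initial segment, a subgroup of $G$, and an ordered abelian group under $\precsim$, which gives conditions (1) and (2). For condition (3), Proposition \ref{valuation} constructs a valuation $v$ on $G$ sending each nonzero element of $G^o$ to a value $\gamma_0$ strictly below $v(G^v)$ (with the reversed ordering), and agreeing with $\precsim$ on $G^v = G \setminus H$. So this direction requires no new argument.

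For the converse direction $(\Leftarrow)$, I would assume an $H$ satisfying (1)--(3) is given and verify axioms $(Q_1)$ and $(Q_2)$. For $(Q_1)$, note that if $x \sim 0$ then $x \precsim 0 \in H$, so by the initial-segment property $x \in H$, and then the ordered abelian group structure on $H$ forces $x = 0$. For $(Q_2)$, I would proceed by case analysis on the location of $y$ and $z$ relative to $H$, invoking (2) whenever the relevant terms all fall in $H$ and the ultrametric inequality from (3) in the mixed cases. The key auxiliary fact is the standard valuation identity: if $v(a) \neq v(b)$ then $v(a+b) = \min(v(a), v(b))$, so that adding an element of high valuation (an element of $H$) to one of low valuation (an element of $G\setminus H$) leaves the $\sim$-class of the latter unchanged. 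A further small observation I would use repeatedly is that, since $H$ is an initial segment of a totally quasi-ordered set, $h \precnsim g$ holds for every $h \in H$ and $g \in G \setminus H$.

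The principal obstacle is the bookkeeping in the subcase of $(Q_2)$ where both $y$ and $z$ lie in $G\setminus H$. There, the hypothesis $y \nsim z$ translates into $v(y) \neq v(z)$ via (3), and I must split further according to the sign of this strict inequality and according to whether $x$ lies in $H$ or in $G \setminus H$ (and in the latter case, on how $v(x)$ compares to $v(y)$ and $v(z)$). In each subcase the ultrametric identity together with the order-reversal between $\leq$ on the value chain and $\precsim$ on $G$ reduces the required inequality $x+z \precsim y+z$ to a direct comparison of valuations or a direct application of (OG) inside $H$. The remaining cases ($z \in H$ with $y \in H$ or $y \notin H$, and $z \notin H$ with $y \in H$) are shorter: the first uses (OG) once $x$ is forced into $H$ by the initial-segment property, the second uses $v(x+z) = v(z)$ and $v(y+z) = v(z)$, and the third uses $y + z \sim y$ together with a quick split on $x$. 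Once these cases are dispatched, $(Q_2)$ holds and the theorem follows.
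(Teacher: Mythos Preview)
Your proposal is correct and follows the same overall architecture as the paper's proof: take $H=G^o$ and cite Propositions~\ref{segin} and~\ref{valuation} for the forward direction, then verify $(Q_1)$ and $(Q_2)$ directly for the converse.

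The only noteworthy difference is in the granularity of the case analysis for $(Q_2)$. You split on the positions of both $y$ and $z$ relative to $H$ (four cases), and in the case $y,z\in G\setminus H$ you propose further subcases on the sign of $v(y)-v(z)$ and on the location of $x$. The paper is more economical: it splits only on whether $y\in H$. When $y\notin H$, the paper observes in one stroke that $x\precsim y$ forces $v(x)\geq v(y)$ (regardless of whether $x$ or $z$ lies in $H$, since $v$ is defined on all of $G$ and $v(H)>v(G\setminus H)$), and that $y\nsim z$ forces $v(y)\neq v(z)$; from these two facts the ultrametric law gives $v(x+z)\geq v(y+z)$ and $y+z\notin H$ immediately, with no further subdivision. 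Your finer decomposition reaches the same conclusion and is not wrong, but it obscures the fact that the valuation $v$ already absorbs all the bookkeeping once $y$ leaves $H$. (Incidentally, your labelling of the ``second'' and ``third'' remaining cases appears swapped relative to the order you list them in parentheses; the argument ``$v(x+z)=v(z)=v(y+z)$'' belongs to the case $z\notin H$, $y\in H$, not to $z\in H$, $y\notin H$.)
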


 \begin{proof}
  We have already showed that if $\precsim$ is compatible with $+$ then $G^o$ satisfies (1)+(2)+(3) (Propositions 
  \ref{segin} and \ref{valuation}).
  Assume there exists $H$ satisfying (1)+(2)+(3).
  $(Q_1)$ is clearly satisfied, so let us prove 
  $(Q_2)$.
  Let $x\precsim y\nsim z$. 
  Assume $y\in H$. Since $H$ is an initial segment this implies $x\in H$. If $z\in H$, then since $H$ is an ordered abelian group
  we have $x\precsim y\Rightarrow x+z\precsim y+z$. If $z\notin H$, then $v(x),v(y)>v(z)$ so 
  $v(z)=v(x+z)=v(y+z)$, and since $v$ and $\precsim$ coincide outside of $H$ this means 
  $z\sim x+z\sim y+z$ so in particular $x+z\precsim y+z$.
  Assume $y\notin H$. Then $x\precsim y$ implies $v(x)\geq v(y)$ and
  $z\nsim y$ implies  $v(z)\neq v(y)$. It follows that $v(x+z)\geq v(y+z)$ and $y+z\notin H$ (otherwise we would have
  $v(y+z)>v(z)$ which would imply $v(y)=v(z)$), hence $x+z\precsim y+z$. 
 \end{proof}
 
\begin{Rem}\label{remarquestructuretheorem}
 
 As we have seen in Proposition \ref{quotientGo}, (1)+(2)+(3) implies:
 \begin{itemize}
 \item[(3$'$)] The q.o $\precsim$ induces a valuational q.o on $G/H$ via the formula:
   \[g+H\precsim h+H\Leftrightarrow g-h\in H\vee(g-h\notin H\wedge g\precsim h)\]
   \end{itemize}
   
 It is tempting to replace (3) by (3$'$) in Theorem \ref{structure}, as (3$'$) seems to be a more elegant reformulation of 
 (3).
 However,
 condition (3) is in general stronger than (3$'$), so that Theorem \ref{structure} becomes false if we replace (3) by (3$'$).
 We can construct an example of a group satisfying 
 (1)+(2)+(3$'$) but which is not compatible:
 Take $G:=(\Z/2\Z)\times\Z$ with the following q.o:
 $(a,b)\precsim (c,d)\Leftrightarrow (a=c\wedge b\leq d)\vee(a<c)$, where $\leq$ is the usual order of $\Z$.
 We have:
 $(0\times\Z,\leq)\precnsim \dots (1,-n)\precnsim\dots\precnsim(1,-1)
 \precnsim(1,0)\precnsim\dots\precnsim(1,n)\precnsim
 \dots$.
 Setting $H=0\times\Z$, $H$ satisfies (1)+(2). It also satisfies (3$'$):
 If $(a,b),(c,d)$ satisfy $(a,b)-(c,d)\notin H$, that means $a\neq c$. From $(a,b)\precsim(c,d)$ follows 
 $a=0\wedge c=1$, and if we take $(0,e_1),(0,e_2)\in H$ then we still have  $(a,b)+(0,e_1)\precsim (c,d)+(0,e_2)$. 
 By Lemma \ref{quotient} this proves
 that the q.o on the quotient is well-defined. Moreover, the induced q.o on the quotient
 $G/H=\Z/2\Z$ is $0\precnsim 1$ which is valuational, 
 so (3$'$) is satisfied. However, $\precsim$ cannot be compatible: the set of o-type elements is $G\backslash\{(1,0)\}$ which is clearly not a 
 group, thus contradicting Proposition \ref{segin}. We can also give an explicit example of axiom $(Q_2)$ failing: take 
 $x:=(0,0), y:=(1,0)$ and $z:=(1,1)$. We have $x\precsim y\nsim z$ but 
 $y+z=(0,1)\precnsim x+z=(1,1)$.
 
 However, we can replace (3) by (3$'$) plus an extra condition, which gives us a second version of the structure theorem:
 
\end{Rem}

 \begin{Thm}[structure theorem, second version]\label{secondstructure}
  Let $(G,\precsim)$ be a q.o.a.g.
  Then $\precsim$ is compatible with $+$ if and only if $G$ admits a subgroup $H$ satisfying the following properties:
 \begin{enumerate}[(1)]
     \item $H$ is an initial segment of $G$.
   \item $(H,\precsim)$ is an ordered abelian group.
   \item[(3$'$)] The q.o $\precsim$ induces a valuational q.o on $G/H$ via the formula:
   \[g+H\precsim h+H\Leftrightarrow g-h\in H\vee(g-h\notin H\wedge g\precsim h)\]
   \item[(4)] For any $g,h\in G$, $g\notin H$ and $g-h\in H$ implies $g\sim h$.
   \end{enumerate}

 \end{Thm}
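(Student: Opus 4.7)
The forward implication is a direct repackaging of earlier results: setting $H:=G^o$, property (1) is Proposition \ref{segin}, property (2) follows because the subgroup $(G^o,\precsim)$ is itself a compatible q.o.a.g all of whose elements are o-type and hence is an ordered abelian group by Proposition \ref{qoagotype}, property (3$'$) is exactly the content of Proposition \ref{quotientGo}, and property (4) is Remark \ref{sameHclassareequivalent}(3).

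For the converse, my plan is to deduce condition (3) of Theorem \ref{structure} from (1)+(2)+(3$'$)+(4) and then invoke that theorem rather than verify $(Q_1)$ and $(Q_2)$ by hand. By (3$'$) the induced q.o on $G/H$ is valuational, so it arises from some valuation $w:G/H\to\Gamma'\cup\{\infty\}$. I extend $w$ to a map $v$ on all of $G$ by adjoining a fresh symbol $\gamma_0$ placed strictly above $\Gamma'$ and setting $v(g):=w(g+H)$ for $g\notin H$, $v(g):=\gamma_0$ for $g\in H\setminus\{0\}$, and $v(0):=\infty$. Verifying that $v$ is a group valuation is a routine case analysis on whether the operands lie in $H$ (using that $H$ is a subgroup and $w$ is a valuation), and by construction $v(H\setminus\{0\})=\{\gamma_0\}$ sits strictly above $v(G\setminus H)\subseteq\Gamma'$, so the ``separation'' half of condition (3) of Theorem \ref{structure} is automatic.

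The crucial step, and the only one where (4) is actually used, is to show that $\precsim$ coincides with the q.o induced by $v$ on $G\setminus H$. For $g,h\in G\setminus H$, one direction is immediate from (3$'$): if $g\precsim h$ then $g+H\precsim' h+H$, so $w(g+H)\geq w(h+H)$, i.e.\ $v(g)\geq v(h)$. Conversely, if $v(g)\geq v(h)$, then $g+H\precsim' h+H$, and the formula in (3$'$) gives either $g-h\in H$ or else $g\precsim h$ directly; in the former case (4) furnishes $g\sim h$, whence $g\precsim h$. This closes the verification, so hypotheses (1)+(2)+(3) of Theorem \ref{structure} hold and compatibility of $\precsim$ follows from that theorem. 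The main obstacle is really this final coincidence of quasi-orders on $G\setminus H$: condition (4) is exactly what patches the gap identified in Remark \ref{remarquestructuretheorem}, where (3$'$) alone was shown to be insufficient.
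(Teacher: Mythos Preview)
Your proof is correct and follows essentially the same route as the paper's: both take $H=G^o$ for the forward direction (citing Proposition~\ref{segin}, Proposition~\ref{quotientGo}, and Remark~\ref{sameHclassareequivalent}(3)), and for the converse both lift the valuation from $G/H$ to $G$ by adjoining a value $\gamma_0$ for $H\setminus\{0\}$, then use (4) to handle the case $g-h\in H$ when verifying that $\precsim$ and $v$ agree on $G\setminus H$, thereby reducing to Theorem~\ref{structure}. The only differences are cosmetic (your $w,v$ are the paper's $v,w$, and you cite Proposition~\ref{qoagotype} explicitly for property~(2)).
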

 
 \begin{proof}
    If $\precsim$ is compatible, then (1)+(2)+(3$'$)+(4) holds with $H:=G^o$ ((3$'$) is Proposition \ref{quotientGo} and (4) is 
   Remark \ref{sameHclassareequivalent}(3)).
    Now assume that (1)+(2)+(3$'$)+(4) holds.   We just have to show that 
    (3) of Theorem \ref{structure} holds. By (3$'$), we know that the q.o
    $\precsim$ on $G/H$ is valuational, and we denote by $v:G/H\to\Gamma\cup\{\infty\}$ the corresponding valuation. 
    We lift $v$ to a valuation $w$ on $G$ as follows: add a point $\gamma_0$ to $\Gamma$ such that 
    $\Gamma<\gamma_0<\infty$. For any $g\in G$ define $w(g)$ as follows: 
    
    \[w(g)=\left\{\begin{array}{cc}
                 v(g+H) & \text{ if } g\notin H\\
                 \gamma_0 & \text{ if } 0\neq g\in H\\
                 \infty & \text{ if } g=0
                \end{array}\right.\]
    It is clear from its definition that 
    $w$ is a valuation (because $v$ is a valuation and $H$ is a subgroup of $G$). 
    Take $g,h\notin H$. If $g\precsim h$, then $g+H\precsim h+H$ hence 
    $v(g+H)\geq v(h+H)$ hence $w(g)\geq w(h)$. Conversely, if 
    $w(g)\geq w(h)$, then by definition of $w$ we must have 
    $h+H\precsim g+H$. If $h-g\notin H$ then it immediately follows from the definition of $\precsim$ on 
    $G/H$ that $h\precsim g$, and if $h-g\in H$ then it follows from (4) that $h\precsim g$. This shows that 
    (3) of Theorem \ref{structure} holds.
 \end{proof}

    We can reformulate Theorem \ref{secondstructure} into the language of exact sequences:  a compatible q.o.a.g is 
    an extension of a valued group by an ordered group:
 \begin{Thm}[Structure theorem, third version]\label{thirdstructure}
  Let $(G,\precsim)$ be a q.o.a.g. Then
  $\precsim$ is compatible with $+$ if and only if there exists an exact sequence
  $0\rightarrow G^o\overset{\iota}{\rightarrow} G\overset{\pi}{\rightarrow} F\rightarrow 0$
  such that there exists a group order $\leq_o$ on $G^o$ and a valuation $v$ on 
  $F$ such that for any $g,h\in G$, \newline  
      $g\precsim h\Leftrightarrow (g,h\in \iota(G^o)\wedge \iota^{-1}(g)\leq_o \iota^{-1}(h))\vee(h\notin \iota(G^o)\wedge v(\pi(g))\geq v(\pi(h)))$
 \end{Thm}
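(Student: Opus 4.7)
The plan is to derive Theorem \ref{thirdstructure} as a straightforward reformulation of Theorem \ref{secondstructure} (equivalently, of the combination Proposition \ref{formulafortheqo} together with the structural Propositions \ref{segin} and \ref{quotientGo}). The idea is that an exact sequence $0 \to G^o \overset{\iota}{\to} G \overset{\pi}{\to} F \to 0$ is just a way of naming a subgroup $H \leq G$ together with the quotient $G/H$, so this theorem and Theorem \ref{secondstructure} express the same decomposition in different languages.

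For the forward direction, I would assume $\precsim$ is compatible and take the sequence
\[
0 \rightarrow G^o \overset{\iota}{\rightarrow} G \overset{\pi}{\rightarrow} G/G^o \rightarrow 0
\]
with $\iota$ the inclusion and $\pi$ the canonical projection. By Proposition \ref{segin}, $G^o$ is a subgroup and $\precsim$ restricted to $G^o$ is a group order $\leq_o$; by Proposition \ref{convexsubgroupProp}, $\precsim$ induces a compatible q.o on $F := G/G^o$; by Proposition \ref{quotientGo}, this induced q.o is valuational, given by some valuation $v$ on $F$. Then the required formula
\[
g \precsim h \Leftrightarrow (g,h \in \iota(G^o) \wedge \iota^{-1}(g) \leq_o \iota^{-1}(h)) \vee (h \notin \iota(G^o) \wedge v(\pi(g)) \geq v(\pi(h)))
\]
is exactly Proposition \ref{formulafortheqo}, once we identify $\iota(G^o)$ with $G^o$ and observe that $v(\pi(g)) \geq v(\pi(h))$ translates $g+G^o \precsim h+G^o$ in $G/G^o$.

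For the reverse direction, suppose we are given such an exact sequence together with $\leq_o$ and $v$. Put $H := \iota(G^o)$ and transport $\leq_o$ to $H$ via $\iota$; then $(H, \leq_o)$ is an ordered abelian group. I need to check the three conditions of Theorem \ref{structure}. Condition (2) is immediate from the order on $H$. For (1), note that if $g \in H$ and $h \precsim g$ with $h \notin H$, the defining formula forces $g \notin H$ (since the first clause requires both elements to lie in $H$, and the second requires $h = g$ to have $\pi$-image of finite valuation, contradicting $g \in H$); so $H$ is an initial segment. For (3), define a valuation $w$ on $G$ by $w(g) := v(\pi(g))$ for $g \notin H$, $w(g) := \gamma_0$ for $0 \neq g \in H$ with a fresh $\gamma_0$ strictly greater than all values of $v$, and $w(0) := \infty$. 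Since $v$ is a valuation on $F$ and $H = \ker \pi$, the ultrametric inequality for $w$ is immediate (the only mildly non-trivial case is when $g, h \notin H$ but $g+h \in H$, where we get $w(g+h) = \gamma_0 > v(\pi(g)) = v(\pi(h))$). Outside $H$, the formula gives $g \precsim h \Leftrightarrow v(\pi(g)) \geq v(\pi(h)) \Leftrightarrow w(g) \geq w(h)$, and $w(H) > w(G \setminus H)$ holds by construction. Theorem \ref{structure} then yields compatibility of $\precsim$.

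The only slightly delicate point, which I expect to be the main place where care is needed, is the verification that the given formula uniquely determines $\precsim$ in the reverse direction, in particular that the asymmetric shape of the right-hand side (the condition ``$h \notin \iota(G^o)$'' rather than ``$g$ or $h \notin \iota(G^o)$'') is consistent and indeed defines a total q.o with $H$ as an initial segment. This is handled by the observation that whenever $g \in H$ and $h \notin H$ the second clause applies (with $v(\pi(g)) = \infty > v(\pi(h))$), so $g \precnsim h$ automatically, while the case $g \notin H$, $h \in H$ is excluded from both clauses, reflecting $h \precnsim g$.
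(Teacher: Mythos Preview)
Your argument is correct and follows essentially the same route as the paper: the forward direction is exactly Proposition \ref{formulafortheqo}, and for the converse you set $H:=\iota(G^o)$ and verify the hypotheses of an earlier structure theorem. The only cosmetic difference is that the paper invokes Theorem \ref{secondstructure} (checking (1), (2), (3$'$), (4)), whereas you go straight to Theorem \ref{structure} by constructing the lifted valuation $w$ explicitly; since the proof of Theorem \ref{secondstructure} itself builds this same $w$, the two arguments are really the same, yours being marginally more direct.
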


 \begin{proof}
  One direction is given by Proposition \ref{formulafortheqo}. For the other direction, assume that such an exact sequence as above 
  exists. We use Theorem \ref{secondstructure}. Clearly, setting $H:=\iota(G^o)$ then 
  (2) of Theorem \ref{secondstructure} is satisfied. If $g\precsim h$ we have by assumption 
  $(g,h\in \iota(G^o)\wedge \iota^{-1}(g)\leq_o \iota^{-1}(h))\vee(h\notin \iota(G^o)\wedge v(\pi(g))\geq v(\pi(h)))$, 
  so $h\in H$ implies $g\in H$, which proves (1). We now show that 
  $\precsim$ induces a q.o on $G/H\cong F$. If $g_1\precsim g_2$  and $g_1-g_2\notin H$, 
  then $g_2\notin H\wedge v(\pi(g_1))\geq v(\pi(g_2)))$, and since $H=\ker\pi$ we have for every $h_1,h_2\in H$, 
  $\pi(g_1+h_1)=\pi(g_1),\pi(g_2+h_2)=\pi(g_2)$ hence $g_2+h_2\notin H\wedge v(\pi(g_1+h_1))\geq v(\pi(g_2+h_2)))$
  so $g_1+h_1\precsim g_2+h_2$. This shows that the condition of Lemma \ref{quotient} is satisfied. It is then easy to see that 
  the q.o induced by $\precsim$ on $G/H\cong F$ is exactly the q.o corresponding to $v$. This shows (3$'$). Finally, if 
  $g\notin H$ and $g-h\in H$, then we have 
  $g,h\notin H\wedge v(\pi(g))=v(\pi(h))$ hence by assumption $g\sim h$, hence (4).
 \end{proof}

 \section{Products of compatible q.o's}\label{productsection}
 In the theory of ordered abelian groups, there is a natural notion of 
   product, namely the lexicographic product (see Section \ref{prelsection}).
   The goal of this section is to develop a similar notion for compatible q.o.a.g's.  
   We first introduce the notion of compatible Hahn product, and we then use this notion to prove a
   generalization of Hahn's embedding theorem for compatible q.o.a.g's. We then show that 
   the compatible product of an ordered group by a valued group preserves elementary equivalence.
 \subsection{The compatible Hahn product}
   
    In Section \ref{prelsection}, we recalled the definition of the lexicographic product of a family of 
    ordered groups. Unfortunately, we cannot generalize this definition to q.o groups by simply replacing 
    $\leq$ by a q.o. Indeed, with such a definition, the lexicographic product of a family of 
    valuational q.o's would not be a compatible q.o. This forces us to introduce a specific notion of product for
    valuational q.o's.   
  Given an ordered family $(B_{\gamma},\precsim_{\gamma})_{\gamma\in\Gamma}$ of q.o groups, let $G:=\Hahn{B}$ and let $v$ be the usual valuation of 
  $G$, i.e $v(g)=\min\supp(g)$.  
  We define 
    the \textbf{valuational Hahn product} of the family $(B_{\gamma},\precsim_{\gamma})_{\gamma\in\Gamma}$  as the quasi-ordered group $(G,\qoval)$,
    where 
    $\qoval$ is defined as follows: \newline    
    $g=(g_{\gamma})_{\gamma}\qoval h=(h_{\gamma})_{\gamma}\Leftrightarrow g_{\delta}\precsim_{\delta} h_{\delta}$, where 
    $\delta=\min(v(g),v(h))$. We then have the following:
 
 \begin{Prop}\label{valprod}
      Let $(B_{\gamma},\precsim_{\gamma})_{\gamma\in\Gamma}$ be an ordered family of valuational q.o's and let 
      $(G,\qoval)$ be the valuational product of the family $(B_{\gamma},\precsim_{\gamma})_{\gamma\in\Gamma}$. Then $\qoval$ is valuational.      
      \end{Prop}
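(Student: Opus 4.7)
The plan is to explicitly construct a valuation $w$ on $G$ whose induced quasi-order coincides with $\qoval$, by stacking the valuations of the factors on top of the canonical Hahn valuation $v$.

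For each $\gamma\in\Gamma$, since $\precsim_\gamma$ is valuational, fix a valuation $v_\gamma : B_\gamma \to \Delta_\gamma \cup \{\infty\}$ inducing it, so that $b \precsim_\gamma b' \Leftrightarrow v_\gamma(b) \geq v_\gamma(b')$. Let $\Lambda := \{(\gamma,\epsilon) : \gamma\in\Gamma, \epsilon\in\Delta_\gamma\}$ be ordered lexicographically (first by $\Gamma$, then by $\Delta_\gamma$), and adjoin a top element $\infty$. Define $w : G \to \Lambda \cup \{\infty\}$ by $w(0) = \infty$ and, for $g \neq 0$, $w(g) := (v(g),\, v_{v(g)}(g_{v(g)}))$, where as before $v(g) = \min\supp(g)$.

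The verification then breaks into two parts. First, show that $w$ is a valuation, i.e.\ check $w(g)=\infty \Leftrightarrow g=0$ (immediate), $w(-g)=w(g)$ (from $\supp(-g)=\supp(g)$ and $v_\gamma(-x)=v_\gamma(x)$), and the ultrametric inequality $w(g+h)\geq\min(w(g),w(h))$. For the last, split on whether $v(g)\neq v(h)$ or $v(g)=v(h)=:\delta$. In the unequal case (say $v(g)<v(h)$), the first coordinate of $w(g+h)$ equals $v(g)$ and $(g+h)_{v(g)} = g_{v(g)}$, so $w(g+h)=w(g)=\min(w(g),w(h))$. In the equal case, either $g_\delta+h_\delta\neq0$, in which case the first coordinate of $w(g+h)$ is $\delta$ and the second is $v_\delta(g_\delta+h_\delta)\geq\min(v_\delta(g_\delta),v_\delta(h_\delta))$ by the ultrametric inequality for $v_\delta$, or $g_\delta+h_\delta=0$, in which case $v(g+h)>\delta$ and so $w(g+h)$ strictly dominates both $w(g)$ and $w(h)$ already in the first coordinate.

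Second, show that the q.o induced by $w$ is $\qoval$. Let $g,h\in G$ be nonzero and set $\delta := \min(v(g),v(h))$. If $v(g)<v(h)$, then $w(g)<w(h)$ in the lex order, while $g_\delta = g_{v(g)}\neq 0$ and $h_\delta = 0$; since $v_\delta(0)=\infty$ we have $h_\delta \precsim_\delta g_\delta$ but not $g_\delta\precsim_\delta h_\delta$, hence $h\qoval g$ and not $g\qoval h$, matching the condition $w(h)>w(g)$. The symmetric case is analogous. If $v(g)=v(h)=\delta$, then $g\qoval h \Leftrightarrow g_\delta\precsim_\delta h_\delta \Leftrightarrow v_\delta(g_\delta)\geq v_\delta(h_\delta)$, which on comparing the (equal) first coordinates of $w(g)$ and $w(h)$ is exactly $w(g)\geq w(h)$.

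The main obstacle is to handle the boundary case $g_\delta + h_\delta = 0$ in the ultrametric check for $w$, which requires noticing that the rise in the first coordinate of $v(g+h)$ automatically ensures the required strict inequality regardless of what happens in the second coordinate; beyond that, the verification is a routine case analysis, since the two-layer structure of $w$ precisely mirrors the two regimes in the definition of $\qoval$ (different supports vs.\ same leading support).
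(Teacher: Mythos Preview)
Your argument is correct but takes a different route from the paper. The paper proceeds axiomatically: it verifies directly that $\qoval$ satisfies the three properties characterizing a valuational q.o on a group, namely $0\nqoval g$ for $g\neq0$, $g\eqval -g$ for all $g$, and the ultrametric inequality $h\qoval g\Rightarrow g+h\qoval g$, each reduced in one line to the corresponding property of the single factor $\precsim_\delta$ at $\delta=v(g)$. You instead build the valuation explicitly by lexicographically layering the $v_\gamma$'s over the Hahn valuation $v$, and then check that this $w$ induces $\qoval$. Your approach yields strictly more information---an explicit value set $\Lambda$ for the resulting valuation---at the cost of a longer case analysis (the separate treatment of $g_\delta+h_\delta=0$, and the split on $v(g)\neq v(h)$ versus $v(g)=v(h)$ in the comparison step). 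The paper's argument is shorter and avoids introducing auxiliary data, relying instead on the general principle, implicit from the earlier discussion, that any q.o satisfying those three properties is valuational via $g\mapsto cl(g)$. One small omission in your write-up: in the second part you compare only nonzero $g,h$; the cases with $g=0$ or $h=0$ are immediate but should be acknowledged.
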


\begin{proof}
     Let $g\in G$, $g\neq0$, and $\delta:=v(g)$. Since $\precsim_{\delta}$ is valuational, 
     we have $0\precnsim_{\delta}g_{\delta}$ and $g_{\delta}\sim_{\delta}-g_{\delta}$, which implies by definition of 
     $\precsim$ that $0\nqoval g$ and $g\eqval -g$. We just have to verify that $\qoval$ satisfies the ultrametric inequality. 
     Let $h\in G$ with $h\qoval g$, which implies  in particular   $v(g)\leq v(h)$ and $h_{\delta}\precsim_{\delta} g_{\delta}$ 
     where $\delta=v(g)$.
     Since $\precsim_{\delta}$ is 
     valuational, $h_{\delta}\precsim g_{\delta}$ implies  $h_{\delta}+g_{\delta}\precsim_{\delta}g_{\delta}$. 
     Moreover, $v(g+h)\geq\min(v(g),v(h))$, hence $\delta=\min(v(g),v(g+h))$. It follows from the definition of $\qoval$ that 
     $g+h\qoval g$.
\end{proof}

 This allows us to define a notion of  product for compatible q.o.a.g's. 
 Let $(B_{\gamma},\precsim_{\gamma})_{\gamma\in\Gamma}$ be an ordered family of compatible q.o.a.g's. Let $G:=\Hahn{B}$, let
 $(G^o,\leq_o)$ be the lexicographic product of the family $(B_{\gamma}^o,\precsim_{\gamma})_{\gamma\in\Gamma}$ (seen as ordered groups) and set 
 $G^v:=G\backslash G^o$. For each $\gamma\in\Gamma$, the q.o $\precsim_{\gamma}$ induces a valuational q.o on $B_{\gamma}/B_{\gamma}^o$ by 
 Proposition \ref{quotientGo}.
 Let $(F,\qoval)$ be the valuational Hahn product of the family 
 $(B_{\gamma}/B_{\gamma}^o,\precsim_{\gamma})_{\gamma\in\Gamma}$.  Note that $G/G^o$ is canonically isomorphic to 
 $F$ via the isomorphism $\psi: (g_{\gamma})_{\gamma}+G^o\mapsto (g_{\gamma}+B_{\gamma}^o)_{\gamma}$.
 We  define 
 the \textbf{compatible Hahn product} of the family $(B_{\gamma},\precsim_{\gamma})_{\gamma\in\Gamma}$ as the 
 compatible q.o.a.g $(G,\precsim)$, where $\precsim$ is defined by the following formula: 
     $g\precsim h\Leftrightarrow (g,h\in G^o\wedge g\leq_o h)\vee(h\in G^v\wedge \psi(g+G^o)\qoval \psi(h+G^o))$. 
     The fact that $\precsim$ is compatible follows directly from Theorem \ref{thirdstructure}. We denote the compatible 
     Hahn product of the family $(B_{\gamma},\precsim_{\gamma})_{\gamma\in\Gamma}$ by 
     $\text{H}_{\gamma\in\Gamma}(B_{\gamma},\precsim_{\gamma})$.
     
     One particular case of compatible Hahn product is the case of the product of an ordered group by a 
     valued group. If $(G,\leq)$ is an ordered group and $(F,\qoval)$ a group endowed with a valuational q.o, then 
     we denote their compatible Hahn product by 
     $(G,\leq)\cprod (F,\qoval)$. 
    This gives us a way of constructing compatible q.o.a.g's from ordered and valued groups. 
   In particular, we have the following: 
   
    \begin{Prop}\label{realizingorderedandvaluedpart}
    Let $(G,\leq)$ be an ordered abelian group and $(F,v)$ a valued group. Then there exists 
    a compatible q.o.a.g whose ordered part is $(G,\leq)$ and whose valued part is $(F,v)$.
   \end{Prop}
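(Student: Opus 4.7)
The plan is to realize $(G,\leq)$ and $(F,v)$ as the ordered and valued parts of a compatible q.o.a.g by directly applying the compatible Hahn product construction introduced just before the statement. First, I convert $v$ into its associated valuational quasi-order $\precsim_v$ on $F$, defined by $a\precsim_v b\Leftrightarrow v(a)\geq v(b)$; as recalled in the preliminaries, this makes $(F,\precsim_v)$ a compatible q.o.a.g in which the only o-type element is $0$. Then I form the compatible Hahn product $(H,\precsim):=(G,\leq)\cprod(F,\precsim_v)$, which by its construction (justified via Theorem~\ref{thirdstructure}) is a compatible q.o.a.g.

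It then suffices to identify $H^o$ and $H/H^o$ with the given data. By the very definition of the compatible Hahn product, $H^o$ is the lexicographic product of the o-type parts of the factors, namely $G^o=G$ (since $\leq$ is an order, every element of $G$ is o-type by Proposition~\ref{qoagotype}) and $F^o=\{0\}$. Hence $(H^o,\precsim)$ is canonically isomorphic to $(G,\leq)$, which identifies the ordered part. Similarly, the valued part of $H$ is, by definition, the valuational Hahn product of the families $B_\gamma/B_\gamma^o$; here these reduce to $G/G=0$ and $F/\{0\}=F$ equipped with $\precsim_v$, and therefore the valued part of $H$ is exactly $(F,v)$.

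There is essentially no real obstacle: the statement is almost a tautology once the compatible Hahn product is set up, and the proof reduces to unwinding the definitions. The only points that require attention are the routine passage from the valuation $v$ to the valuational q.o $\precsim_v$ and the verification that the two-factor compatible Hahn product of an ordered group and a valuational q.o group recovers precisely the prescribed ordered and valued parts.
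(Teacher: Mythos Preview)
Your proposal is correct and follows exactly the same approach as the paper's proof, which simply says to take $(G,\leq)\cprod(F,\qoval)$ where $\qoval$ is the q.o corresponding to $v$. You have merely unwound the definitions of the compatible Hahn product to verify that its ordered and valued parts are the given $(G,\leq)$ and $(F,v)$, which the paper leaves implicit.
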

   
   \begin{proof}
    Just take  $(G,\leq)\cprod(F,\qoval)$, where $\qoval$ is the q.o corresponding 
    to $v$.    
   \end{proof}

    In view of Theorem \ref{secondstructure}, it is natural to ask whether every compatible 
    q.o.a.g 
    can be obtained as the product of an ordered group by a valued group. However,
    Example \ref{contreexemple}(b) shows that it is not the case: 
    $G^o=5\Z$ is not a direct factor of $G=\Z$, so $(G,\precsim)$ is not the compatible product of 
     $G^o$ by $G/G^o$.      
     Fortunately, we have the following:
    \begin{Prop}\label{Gascompatibleproduct}
   Let $(G,\precsim)$ be a compatible q.o.a.g. If $G^o$ is a direct summand of $G$ with complement $F$, then $(F,\precsim)$ is canonically isomorphic to the valued part of 
   $(G,\precsim)$ and we have $(G,\precsim)=(G^o,\precsim)\cprod(F,\precsim)$. In other words, $(G,\precsim)$ is the compatible Hahn product of its 
   ordered part by its valued part.
  \end{Prop}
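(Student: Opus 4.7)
The plan is to use the infrastructure from the previous sections to reduce this to a direct unpacking of definitions; the key ingredients are Remark \ref{sameHclassareequivalent}(2), Proposition \ref{quotientGo}, and Proposition \ref{formulafortheqo}.

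First, I would identify $(F,\precsim)$ with the valued part of $(G,\precsim)$. Since $F$ is a group complement of $G^o$ in $G$, the canonical projection $\pi:G\to G/G^o$ restricts to a group isomorphism $F\to G/G^o$. Remark \ref{sameHclassareequivalent}(2), applied with $H=G^o$, says precisely that this map is also an isomorphism of q.o groups, where $F$ carries the restriction of $\precsim$ and $G/G^o$ carries the q.o induced by Lemma \ref{quotient}. By Proposition \ref{quotientGo}, the latter is valuational, hence it is the q.o associated to the valuation $v$ of the valued part; this identifies $(F,\precsim)$ with the valued part of $(G,\precsim)$.

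Second, I would check that $\precsim$ on $G$ coincides with the q.o of the compatible Hahn product $(G^o,\precsim)\cprod(F,\precsim)$. The underlying group of this product is $G^o\oplus F=G$. Since $(F,\precsim)$ is valuational, every nonzero element $f\in F$ satisfies $f\sim -f$ by Proposition \ref{otypevtype}, so the o-type subgroup of $F$ is $\{0\}$; hence the ordered part of the product equals $G^o\oplus\{0\}=G^o$, and the induced valuational q.o on the quotient is exactly that of $(F,\precsim)$, which under the identification above agrees with the valuation $v$ on $G/G^o$. By the explicit formula given just after the definition of the compatible Hahn product, the product q.o satisfies
\[
g\precsim h\Leftrightarrow (g,h\in G^o\wedge g\leq_o h)\vee(h\in G^v\wedge \psi(g+G^o)\qoval\psi(h+G^o)),
\]
which, via $v(a)\geq v(b)\Leftrightarrow a\qoval b$, is literally the formula for the original $\precsim$ provided by Proposition \ref{formulafortheqo}. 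Hence the two quasi-orders on $G$ agree, and $(G,\precsim)=(G^o,\precsim)\cprod(F,\precsim)$.

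There is no serious obstacle; the only point that requires care is the bookkeeping of identifications, in particular verifying that the ``valued part'' factor in the compatible Hahn product really is $(F,\precsim)$ itself, which rests on the observation that a valuational q.o group has trivial o-type subgroup.
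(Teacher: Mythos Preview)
Your proposal is correct and follows essentially the same route as the paper: the paper's proof simply states that the result ``follows directly from Proposition \ref{formulafortheqo} and from the definition of the compatible Hahn product,'' and your argument is a faithful unpacking of precisely that, with the added explicit use of Remark \ref{sameHclassareequivalent}(2) to justify the identification of $(F,\precsim)$ with the valued part. The only extra observation you supply that the paper leaves implicit is that a valuational q.o group has trivial o-type subgroup, which is exactly what is needed to see that the ordered part of the compatible product is $G^o$.
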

    \begin{proof}
     It follows directly from Proposition \ref{formulafortheqo} and from the definition of the compatible Hahn product.
    \end{proof}

    Proposition \ref{realizingorderedandvaluedpart} shows that if $(G,\precsim)$ is a compatible q.o.a.g, then the valuation appearing in the 
    valued part of $G$ can a priori be any valuation, in particular it does not have to be a $\Z$-module valuation. However, 
    we mentioned in the introduction that it can be interesting to restrict our attention to 
    such valuations. Consider the following family of axioms indexed by $n\in\N$:
       $(VM_n)\quad\forall g, -g\precsim g\Rightarrow g\precsim ng$ (``VM'' stands for ``valued module''). 
    This family of axioms gives an axiomatization of the class of compatible q.o.a.g's whose valuation on its valuational part is 
    a $\Z$-module valuation. If $(G,\precsim)$ is such a compatible q.o.a.g, then $G^o$ is pure in $G$, from which we get the
    following result: 
    
    \begin{Prop}
     Let $(G,\precsim)$ be a compatible q.o.a.g satisfying the axiom $(VM_n)$ for every $n\in\N$.
     Assume that $G$ is divisible. Then $(G,\precsim)$ is the compatible Hahn product of its ordered part by its valued part.
    \end{Prop}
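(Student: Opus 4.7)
The plan is to invoke Proposition \ref{Gascompatibleproduct}, which reduces the claim to showing that $G^o$ has a group complement in $G$. Since every divisible subgroup of an abelian group is a direct summand (a classical result of Baer), it suffices to prove that $G^o$ itself is divisible; this is exactly where the axioms $(VM_n)$ and the divisibility of $G$ will be combined.

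To show $G^o$ is divisible, I would fix $h \in G^o$ and $n \geq 1$, use the divisibility of $G$ to pick $g \in G$ with $ng = h$, and then prove that $g$ lies in $G^o$. Suppose for contradiction that $g \neq 0$ is v-type. By Proposition \ref{otypevtype}, $g \sim -g$, so in particular $-g \precsim g$, and the axiom $(VM_n)$ yields $g \precsim ng = h$. On the other hand, a straightforward induction on $n$, using the ultrametric inequality of Proposition \ref{ultrametric} applied to the v-type element $g$, gives $ng \precsim g$, i.e., $h \precsim g$. Combining these two inequalities, $g \sim h$. If $h \neq 0$, then $h$ being o-type forces $cl(h) = \{h\}$, whence $g = h$; this is impossible since $0$ is the only element that is simultaneously o-type and v-type. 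If $h = 0$, then $g \sim 0$, and $(Q_1)$ forces $g = 0$, contradicting our assumption.

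Writing $F$ for a group complement of $G^o$ in $G$, Proposition \ref{Gascompatibleproduct} then identifies $(G,\precsim)$ with $(G^o,\precsim) \cprod (F,\precsim)$, which is exactly the required decomposition as the compatible Hahn product of the ordered part by the valued part. The main conceptual step is the squeeze argument: $(VM_n)$ gives $g \precsim ng$ while the ultrametric inequality gives $ng \precsim g$ for v-type $g$, forcing $g \sim ng$; the o-type rigidity of $h = ng$ then collapses this equivalence to an equality and produces the contradiction. Everything else is the standard passage from purity/divisibility to a splitting, and a direct appeal to the previous structural propositions.
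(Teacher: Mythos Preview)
Your proof is correct and follows the same route as the paper: show that $G^o$ is a direct summand of $G$ and then invoke Proposition \ref{Gascompatibleproduct}. The paper is terser, merely asserting that the axioms $(VM_n)$ make $G^o$ pure in $G$ (hence, since $G$ is divisible, a direct summand), whereas you supply the explicit squeeze argument --- $g\precsim ng$ from $(VM_n)$ and $ng\precsim g$ from the ultrametric inequality of Proposition \ref{ultrametric} --- that underlies this purity claim.
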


     \begin{proof}
      Because of $(VM_n)$, $G^o$ is pure in $G$. Since $G$ is divisible, $G^o$ is then a direct summand of $G$. 
      The result then follows from Proposition \ref{Gascompatibleproduct}.
     \end{proof}

 \subsection{Hahn's embedding theorem}\label{Hahntheoremsection}
  
  We now want to generalize Hahn's embedding theorem for ordered groups to quasi-ordered groups.
  This implies defining a notion of archimedeanity for q.o groups. To do this, we will associate 
  a valuational q.o $\archqo$ to each compatible q.o $\precsim$, which we will call 
  the archimedean q.o associated to $\precsim$.
  
  Let $(G,\precsim)$ be a compatible q.o.a.g.
  Consider the relation $\preqo$ defined as follows: we say that 
  $g\preqo h$ if and only if there is $n,m\in\Z\backslash\{0\}$ such that 
  $0\precsim ng\precsim mh$. We have the following:
  \begin{Lem}\label{preqotransonGo}
   The relation $\preqo$ defines a valuational q.o on $G^o$.
  \end{Lem}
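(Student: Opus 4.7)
Since $(G^o,\precsim)$ is an ordered abelian group by Proposition \ref{segin}, it is totally ordered and torsion-free. For $g\in G^o$ set $|g|:=g$ if $0\precsim g$ and $|g|:=-g$ otherwise. A brief sign analysis shows that for any $g,h\in G^o$,
\[
g\preqo h\ \Longleftrightarrow\ \exists\,n',m'\in\N\setminus\{0\},\ n'|g|\precsim m'|h|,
\]
since $0\precsim ng$ forces $n$ to have the same sign as $g$ (or $g=0$), in which case $ng=|n|\cdot|g|$. I would work exclusively with this unsigned reformulation, which is precisely the classical archimedean comparison on the ordered abelian group $G^o$.

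From the reformulation, reflexivity ($n'=m'=1$) and totality (apply totality of $\precsim$ to the elements $|g|,|h|\succsim 0$) are immediate. For transitivity, from $n_1'|g|\precsim m_1'|h|$ and $n_2'|h|\precsim m_2'|f|$, multiply the first inequality by $n_2'$ and the second by $m_1'$ (positive integer multiplication preserves $\precsim$ on $G^o$ because $G^o$ is an ordered group) and chain to obtain $n_1'n_2'|g|\precsim m_1'm_2'|f|$, i.e.\ $g\preqo f$.

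To see that $\preqo$ is valuational I would verify the three usual axioms. First, $g\preqo 0$ together with $0\preqo g$ forces $ng=0$ for some nonzero $n$, hence $g=0$ since $G^o$ is torsion-free; conversely $0\preqo g$ always holds by a sign choice of $m$. Second, $g$ and $-g$ are always $\preqo$-equivalent because $|g|=|-g|$. Third, for the ultrametric inequality, assume $g\preqo h$, so $n'|g|\precsim m'|h|$ with $n'\geq 1$; then $|g|\precsim m'|h|$, and the triangle inequality $|g+h|\precsim|g|+|h|$ in the ordered group $G^o$ yields $|g+h|\precsim(m'+1)|h|$, hence $g+h\preqo h$. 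This identifies $\preqo$ with the quasi-order induced by the archimedean valuation $\archval$ of $(G^o,\precsim)$.

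The only real obstacle is keeping the sign bookkeeping clean when passing between the original definition (with $n,m\in\Z\setminus\{0\}$) and the unsigned reformulation. Once this is set up, every remaining step is a routine calculation in the ordered abelian group $G^o$.
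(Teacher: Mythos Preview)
Your proposal is correct and follows essentially the same route as the paper: both arguments pass to the absolute-value reformulation $n'|g|\precsim m'|h|$ and then use standard ordered-abelian-group arithmetic (multiplying inequalities by positive integers, the triangle inequality) to verify transitivity and the ultrametric inequality. Your write-up is in fact slightly cleaner, since you make the unsigned reformulation explicit at the outset and also address totality, which the paper leaves implicit.
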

  \begin{proof}
   $\preqo$ is clearly reflexive, let us show transitivity. Assume $f\preqo g\preqo h$. There are 
   $n,m,k,l$ with $0\precsim nf\precsim mg$ and $0\precsim kg\precsim lh$. It follows that 
   $0\precsim \vert n\vert\vert f\vert\precsim \vert m\vert \vert g\vert$ and   
   $0\precsim \vert k\vert\vert g\vert\precsim \vert l\vert \vert h\vert$.
   Since $\precsim$ is an order on $G^o$ these relations imply 
   $0\precsim \vert n\vert \vert f\vert\precsim \vert m\vert \vert g\vert\precsim \vert km\vert \vert g\vert\precsim \vert ml\vert\vert h\vert$, so 
   either $0\precsim nf\precsim mlh$ or $0\precsim nf\precsim -mlh$ holds,
   hence $f\preqo h$. This proves that $\preqo$ is a q.o on $G^o$. Now let us prove that 
   $\preqo$ is valuational. If $g\preqo 0$ then $0\precsim ng\precsim 0$ holds for some $n\neq0$ which implies $ng=0$ by $(Q_1)$, and since 
   $G^o$ is an ordered abelian group it is torsion-free, hence $g=0$. Thus, we have 
   $0\precnsim g$ for every $g\in G^o$ with $g\neq0$. Clearly, $g\preqo -g\preqo g$ holds for every $g$.
   Now assume $g\preqo h$ and take $n,m$ with 
   $0\precsim ng\precsim mh$. We have 
   $0\precsim \vert n\vert\vert g\vert\precsim\vert m\vert\vert h\vert$, which by compatibility implies 
   $\vert n\vert\vert g+h\vert\precsim\vert n+m\vert\vert h\vert$ hence
   $g+h\preqo h$. This proves that the ultrametric inequality holds. 
  \end{proof}

  However, the relation $\preqo$ is not transitive in general. Indeed, consider the following example: 
  set $G:=\Z^2$ and let $v: G\to\{1,2,3,\infty\}$ be the valuation defined as follows: 
  
  \[v(n,m)=\begin{cases}
            1 \text{ if }  p\nmid m\\
            2 \text{ if } 0\neq n\wedge p\mid m\\
            3 \text{ if } n=0\wedge p\mid m\neq0\\
            \infty \text{ if } n=m=0
           \end{cases}\]

           Now let $\precsim$ be the q.o induced by $v$.
           Let $f:=(0,p)$, $g:=(1,p)$ and $h:=(0,1)$. We have 
           $p.h\sim f$, hence $h\preqo f$. Moreover, $g\precsim h$, so $g\preqo h$. However, 
           for every $n,m\in\Z\backslash\{0\}$ we have $mf\sim f\precnsim g\sim ng$, so 
           $g\preqo f$ does not hold. To make $\preqo$ transitive, we define the relation 
           $\archqo$ as follows: we say that $g\archqo h$ if there exists 
           $r\in\N$ and $x_1,\dots,x_r\in G$ such that 
           $g\preqo x_1\preqo x_2\preqo\dots\preqo x_r\preqo h$. Note that $\archqo$ is the same as 
           $\preqo$ for ordered groups. In order to prove that $\archqo$ is a valuational q.o, we need the following 
           lemma:
           
           \begin{Lem}\label{coarseningofvalisval}
            Let $\precsim$ be a compatible q.o, $g\in G\backslash\{0\}$ v-type and let 
            $\etoile$ be a coarsening of $\precsim$. Then for any $h\in G$, 
            $h\etoile g$ implies $g+h\etoile g$. In particular, if $\precsim$ is valuational then 
            $\etoile$ is also valuational.
           \end{Lem}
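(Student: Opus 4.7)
The plan is to do a case analysis according to the finer quasi-order $\precsim$, exploiting the ultrametric inequality for v-type elements (Proposition \ref{ultrametric}). The key subtlety is that the hypothesis $h \etoile g$ does not force $h \precsim g$ under the finer relation, since the coarsening may merge classes that were distinct under $\precsim$. So even though $\precsim$ is total, we must handle both of the following subcases: either $h \precsim g$, or $g \precnsim h$ (in which case $h$ and $g$ become $\etoile$-equivalent).

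In the first subcase $h \precsim g$, since $g \in G^v$, Proposition \ref{ultrametric} directly yields $g + h \precsim g$; because $\etoile$ coarsens $\precsim$, we get $g + h \etoile g$. In the second subcase $g \precnsim h$, we first observe that $h \in G^v \setminus \{0\}$, since $G^v$ is a final segment containing $g$. Applying Proposition \ref{ultrametric} now with the v-type element $h$ and the strictly smaller element $g$, we obtain $h \sim h + g$, hence $g + h \precsim h$, and therefore $g + h \etoile h$. Combining this with the hypothesis $h \etoile g$ by transitivity of $\etoile$ yields $g + h \etoile g$, as desired.

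For the ``in particular'' clause, assume $\precsim$ is valuational; by the v-type analog of Proposition \ref{qoagotype} stated just after Proposition \ref{valuation}, every nonzero element of $G$ is v-type, so the main statement provides the ultrametric inequality for $\etoile$ at every nonzero element. Combined with the fact that $g \sim -g$ (valid for any v-type element, hence for every $g$) is preserved by the coarsening to give $g \sim^\ast -g$, this shows that $\etoile$ satisfies the defining ultrametric and symmetry properties of a valuational q.o (after passing to the quotient by the $\etoile$-equivalence class of $0$, if that class is nontrivial). There is no real obstacle here beyond being careful to split correctly on the finer q.o; the content is essentially the ultrametric inequality plus transitivity of $\etoile$.
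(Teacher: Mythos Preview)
Your argument for the main statement is correct and rests on the same ingredient as the paper, namely the ultrametric inequality for v-type elements (Proposition \ref{ultrametric}). The organization differs slightly: you split directly on whether $h\precsim g$ or $g\precnsim h$ under the finer relation, whereas the paper argues by contradiction, assuming $g\precnsim^{\ast} g+h$, deducing $g\precnsim g+h$, and then obtaining $g+h\sim h$ (hence $g+h\sim^{\ast}h\etoile g$), which contradicts $g\precnsim^{\ast}g+h$. Your Case~2 is exactly the content of the paper's contradiction step, and your Case~1 is the trivial branch the paper handles implicitly; so the two proofs are really the same argument unfolded in two different orders.

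Your treatment of the ``in particular'' clause is in fact more careful than the paper's. You are right to hedge: a coarsening $\etoile$ need not satisfy $(Q_1)$ (for instance the trivial q.o identifying all elements is a coarsening of any valuational q.o but is not itself induced by a valuation), so strictly speaking one only gets that $\etoile$ induces a valuational q.o on $G/\mathrm{cl}^{\ast}(0)$. The paper glosses over this, and in its only application (Proposition \ref{archvaliscoarening}) it separately verifies $(Q_1)$ for $\archqo$ anyway, so no harm is done there; but your parenthetical remark is a genuine and appropriate correction.
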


           \begin{proof}
            Assume $h\etoile g$ and $g\netoile g+h$. Since $\etoile$ is a coarsening of 
            $\precsim$, this implies $g\precnsim g+h$. Since $g$ is v-type, $g+h$ must also be v-type by Proposition 
            \ref{segin}, and it then follows from Proposition \ref{ultrametric} that 
            $g+h\sim h$. Since $\etoile$ is a coarsening of $\precsim$, this implies 
             $h\sim^{\ast} g+h$. We thus have 
            $h\etoile g\netoile g+h\sim^{\ast} h$, which is a contradiction.
           \end{proof}

           \begin{Prop}\label{archvaliscoarening}
            Let $(G,\precsim)$ be a torsion-free compatible q.o.a.g. 
            The relation $\archqo$ is a q.o on $G$. Moreover, it is the finest $\Z$-module-valuational coarsening of 
            $\precsim$.
           \end{Prop}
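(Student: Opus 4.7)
The plan is to verify in turn that $\archqo$ is a total q.o., that it is a $\Z$-module valuational q.o., that it coarsens $\precsim$, and that it is the finest such coarsening. Reflexivity of $\preqo$ (and hence $\archqo$) comes from appropriate sign choices: for any $g$, one of $0\precsim g$ or $g\precsim 0$ holds, so taking $n=m=\pm 1$ gives $0\precsim ng\precsim mg$. Transitivity of $\archqo$ holds by construction as the transitive closure of $\preqo$. For totality, using signs we can arrange $0\precsim ng$ and $0\precsim mh$; then totality of $\precsim$ forces $ng\precsim mh$ or $mh\precsim ng$, yielding $g\preqo h$ or $h\preqo g$.

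For the valuational and $\Z$-module properties, I would show: (i) $0\narchqo g$ whenever $g\neq 0$, requiring both $0\archqo g$ (trivial by taking $0\precsim 0\precsim mg$ for suitable $m$) and $g\not\archqo 0$, which by a chain argument reduces to $h\preqo 0\Rightarrow h=0$, in turn following from $(Q_1)$ combined with torsion-freeness; (ii) $g\archeq -g$, immediate by sign choice; (iii) $ng\archeq g$ for $n\neq 0$, direct from the definition of $\preqo$; and (iv) the ultrametric $h\archqo g\Rightarrow g+h\archqo g$. For (iv) I would analyse cases according to the structure theorem: when $g$ is v-type, Proposition \ref{ultrametric} gives $cl(g+h)\leq\max(cl(g),cl(h))$ directly; when $g,h\in G^o$, Lemma \ref{preqotransonGo} supplies the ultrametric on $G^o$; and the mixed cases are handled using that $G^o$ is an initial segment (Proposition \ref{segin}).

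For the coarsening claim, given $g\precsim h$ I would construct a $\preqo$-chain from $g$ to $h$ by case analysis on the o-type/v-type nature of $g,h$ and on the sign configurations. For minimality, given any $\Z$-module valuational coarsening $\etoile$ of $\precsim$, from $0\precsim ng\precsim mh$ coarsening yields $0\etoile ng\etoile mh$, and the $\Z$-module property gives $ng\sim^{\ast}g$ and $mh\sim^{\ast}h$, so $g\etoile h$; transitivity of $\etoile$ then extends this along arbitrary $\preqo$-chains, yielding $g\archqo h\Rightarrow g\etoile h$.

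The main obstacle is the ultrametric inequality for $\archqo$: since $\preqo$ is not transitive in general (as shown by the example preceding the proposition), the ultrametric must be propagated through arbitrary $\preqo$-chains rather than single steps, and the interaction between o-type and v-type elements requires careful bookkeeping using the structure theorem.
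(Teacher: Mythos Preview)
Your approach is essentially the paper's: the same case analysis for the ultrametric (v-type via Proposition~\ref{ultrametric}, both o-type via Lemma~\ref{preqotransonGo}, mixed via Proposition~\ref{segin}), the same use of $(Q_1)$ plus torsion-freeness for $0\narchqo g$, and the identical minimality argument. The one refinement in the paper is that the v-type case is packaged as Lemma~\ref{coarseningofvalisval}, which shows that \emph{any} coarsening $\etoile$ of $\precsim$ automatically satisfies $h\etoile g\Rightarrow g+h\etoile g$ whenever $g$ is v-type; this dissolves your ``main obstacle'', since once $\archqo$ is known to coarsen $\precsim$ the ultrametric at v-type elements follows directly from that lemma, with no need to propagate anything along $\preqo$-chains.
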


           \begin{proof}
            The fact that $\archqo$ is transitive and is a coarsening of $\precsim$ is clear from its definition. 
            It is also clear that $g\archeq ng$ for all $g\in G$ and $n\in\Z\backslash\{0\}$. Now let us show that 
            $\archqo$ is valuational. Note that for any $g\in G$, 
            $g\preqo 0$ implies $g=0$: indeed, if $g\preqo 0$ then there exists 
            $n\neq0$ with $0\precsim ng\precsim 0$, which by $(Q_1)$ implies $ng=0$ and since $G$ is torsion-free it follows that 
            $g=0$. By definition of $\archqo$, it then follows that $g\archqo 0$ implies $g=0$, so we have 
            $0\narchqo g$ whenever $g\neq0$.
                        Now let us show that $\archqo$ satisfies the ultrametric inequality. Let $g,h\in G$ with 
            $g\archqo h$. If $h\in G^v$, it follows from Lemma \ref{coarseningofvalisval} that 
            $g+h\archqo h$, so assume $h\in G^o$. If $g\in G^o$, it follows from 
            Lemma \ref{preqotransonGo} that $g+h\archqo h$. Assume then that $g\in G^v$. By Proposition 
            \ref{segin} we then have $h\precnsim g$, hence by Proposition \ref{ultrametric} $g+h\sim g$, and since 
            $\archqo$ is a coarsening of $\precsim$ this implies $g+h\archeq g$, hence 
            $g+h\archqo h$. This proves that $\archqo$ is valuational. 
            
            Now let $\etoile$ be another coarsening of $\precsim$ such that $\etoile$ is 
            $\Z$-module valuational. We show that $\etoile$ is a coarsening of $\archqo$. 
            Let $g,h\in G$ with $g\archqo h$. We first assume that
            $g\preqo h$. There is $n,m$ with $0\precsim ng\precsim mh$. Since $\etoile$ is a coarsening of $\precsim$, 
            this implies $ng\etoile mh$. Since $\etoile$ is $\Z$-module-valuational, we have 
            $ng\sim^{\ast} g$ and $mh\sim^{\ast} h$, hence $g\etoile h$. Now for the general case, 
            we know that there are $x_1,\dots,x_r$ with $g\preqo x_1\preqo\dots\preqo x_r\preqo h$. By what we just proved 
            this implies $g\etoile x_1\etoile\dots\etoile x_r\etoile h$, which by transitivity of $\etoile$ implies 
            $g\etoile h$.This shows that $\etoile$ is a coarsening of $\archqo$.
           \end{proof}

  The valuation $\archval$ corresponding to $\archqo$ is called \textbf{the archimedean valuation associated to} $\precsim$.
  $(G,\precsim)$ is an \textbf{archimedean compatible q.o.a.g} if $\archval$ is the trivial valuation on $G$.
  If $(\Gamma,(B_{\gamma})_{\gamma\in\Gamma})$ is the skeleton of $(G,\archval)$, note that each $G_{\gamma}$ is $\precsim$-convex 
  (this follows from the fact that $\archqo$ is a coarsening of $\precsim$). By proposition 
  \ref{convexsubgroupProp}, it follows that $\precsim$ naturally induces a compatible q.o $\precsim_{\gamma}$ on $B_{\gamma}$; 
  note that $(B_{\gamma},\precsim_{\gamma})$ is archimedean. 
  In order to state our Hahn's embedding theorem for q.o groups, we first need to 
  strengthen the embedding theorem for valued $\Z$-modules given in Section \ref{prelsection}:
  
  \begin{Thm}\label{improvedHahnembeddingforvaluedgroups}
   Let $(G,v)$ be a group endowed with a $\Z$-module valuation, $(\Gamma,(B_{\gamma})_{\gamma\in\Gamma})$ the skeleton 
   of $(G,v)$, $H:=\Hahn{B}$ and $w$ the usual valuation on $H$, i.e 
   $w(h)=\min\supp(h)$. There exists a group embedding 
   $\phi:G\to\Hahn{B}$ such that the following holds:
   \begin{enumerate}
    \item For any $g\in G$, $w(\phi(g))=v(g)$.
    \item For any $\gamma\in\Gamma$ and $g\in G$ with $v(g)=\gamma$, the coefficient of $\phi(g)$ at $\gamma$ is
    $g+G_{\gamma}$.
   \end{enumerate}

  \end{Thm}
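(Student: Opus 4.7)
My plan is to invoke the existing Hahn embedding theorem for $\Z$-module valuations and then modify the resulting embedding by an automorphism of $\Hahn{B}$ that simultaneously unwinds the re-indexing automorphism $\psi$ and normalizes the leading coefficients so that they coincide with the canonical projections onto the skeleton.

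First I apply the Hahn embedding theorem recalled in Section \ref{prelsection} to obtain a group embedding $\phi_0 : G \to \Hahn{B}$ and an order-automorphism $\psi : \Gamma \to \Gamma$ such that $\min\supp(\phi_0(g)) = \psi(v(g))$ for every $g \in G$. For each $\gamma \in \Gamma$, I then define a "leading coefficient" map $\alpha_\gamma : B_\gamma \to B_{\psi(\gamma)}$ by $\alpha_\gamma(g + G_\gamma) := \phi_0(g)_{\psi(\gamma)}$ for $g \in G^\gamma$. This is well-defined because if $g' \in g + G_\gamma$, then $v(g - g') > \gamma$, so $\min\supp(\phi_0(g - g')) > \psi(\gamma)$, forcing $\phi_0(g)_{\psi(\gamma)} = \phi_0(g')_{\psi(\gamma)}$. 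Linearity is immediate.

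The central task is showing that each $\alpha_\gamma$ is an isomorphism, and this is where I expect the main work to lie. Injectivity is easy: if $\alpha_\gamma(g + G_\gamma) = 0$ with $g \in G^\gamma$, then $\min\supp(\phi_0(g)) > \psi(\gamma)$, so $v(g) > \gamma$, hence $g + G_\gamma = 0$. For surjectivity I would either revisit the construction underlying the cited Hahn embedding theorem (in which $\phi_0$ is built iteratively from chosen cross-sections $s_\gamma : B_\gamma \to G^\gamma$, so that $\alpha_\gamma$ agrees up to the reindexing $\psi$ with the identity on $B_\gamma$ and is therefore surjective), or I would exploit the divisibility of $G$: since $v$ is a $\Z$-module valuation, each $G^\gamma$ and $G_\gamma$ is divisible, hence $B_\gamma$ is a $\Q$-vector space and the short exact sequence $0 \to G_\gamma \to G^\gamma \to B_\gamma \to 0$ splits, allowing a construction of $\phi_0$ in which $\alpha_\gamma$ is the canonical identification $B_\gamma \cong B_{\psi(\gamma)}$ provided by the Hahn theorem.

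Once the $\alpha_\gamma$'s are known to be isomorphisms, I define the desired embedding by
\[\phi(g)_\gamma := \alpha_\gamma^{-1}\bigl(\phi_0(g)_{\psi(\gamma)}\bigr).\]
The support of $\phi(g)$ equals $\psi^{-1}(\supp(\phi_0(g)))$, which is well-ordered because $\psi^{-1}$ is an order-isomorphism, so $\phi$ does land in $\Hahn{B}$. The map $\phi$ is a group homomorphism (built coordinatewise from homomorphisms), and it is injective: if $\phi(g) = 0$ then $\phi_0(g)_{\psi(\gamma)} = 0$ for every $\gamma$, whence $\phi_0(g) = 0$ and $g = 0$. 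Property (1) follows from $w(\phi(g)) = \min\supp(\phi(g)) = \psi^{-1}(\psi(v(g))) = v(g)$, and property (2) is immediate: for $v(g) = \gamma$ we have $\phi(g)_\gamma = \alpha_\gamma^{-1}(\phi_0(g)_{\psi(\gamma)}) = \alpha_\gamma^{-1}(\alpha_\gamma(g + G_\gamma)) = g + G_\gamma$.
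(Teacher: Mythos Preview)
Your approach is essentially the paper's: invoke the cited Hahn embedding, then post-compose with an automorphism of $\Hahn{B}$ built from reindexing and from the inverse of the leading-coefficient maps. The paper separates this into two successive corrections (first a reindexing $\chi$ to get condition (1), then a coordinatewise $\zeta$ built from $\epsilon_\gamma^{-1}$ to get condition (2)), while you do both at once via $\phi(g)_\gamma := \alpha_\gamma^{-1}(\phi_0(g)_{\psi(\gamma)})$; this is an inessential difference.

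There is, however, a genuine gap at the very first step. The Hahn embedding theorem you invoke from Section~\ref{prelsection} is stated for \emph{divisible} $G$, and Theorem~\ref{improvedHahnembeddingforvaluedgroups} makes no such hypothesis. You cannot simply apply that theorem to an arbitrary $G$. The paper handles this by first passing to the divisible hull $\hat{G}$ of $G$: the $\Z$-module valuation extends uniquely to $\hat{G}$, the skeleton is unchanged, and once the theorem is proved for $\hat{G}$ one restricts along the inclusion $G\hookrightarrow\hat{G}$. Your option (b) for surjectivity (``exploit the divisibility of $G$'') presupposes exactly this missing reduction: a $\Z$-module valuation does not by itself make $G$, $G^\gamma$, or $G_\gamma$ divisible; what it does is make these subgroups pure, so that they become divisible \emph{once $G$ itself is}. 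Insert the divisible-hull reduction at the start and your argument goes through. (The paper, like you, asserts that the leading-coefficient maps are isomorphisms without spelling out surjectivity; your suggestion to trace it back to the section-based construction of $\phi_0$ is the right way to justify it.)
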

  \begin{proof}
   Assume the theorem has been proved for divisible groups and let $\hat{G}$ be the divisible hull of $G$. It is easy to see that there is a unique way of 
   extending $v$ to a $\Z$-module valuation on $\hat{G}$ and that $(\hat{G},v)$ has the same skeleton as 
   $(G,v)$. Now if $\iota$ is the natural embedding from $G$ to $\hat{G}$ and $\phi:\hat{G}\to H$ is as in the 
   theorem, then $\phi\circ\iota:G\to H$ also satisfies the conditions of the theorem. Thus, we can assume that $G$ is divisible. 
   We know from \cite{SKuhlmann} that there exists 
   a group embedding $\psi: G\to H$ and an isomorphism of ordered set $\lambda: \Gamma\to\Gamma$ such that 
   $w(\psi(g))=\lambda(v(g))$ for every $g\in G$. Now consider 
   $\chi: H\to H, (h_{\gamma})_{\gamma\in\Gamma}\mapsto (h_{\lambda(\gamma)})_{\gamma\in\Gamma}$. One can easily check 
   that $\chi$ is a group isomorphism and that 
   $\chi\circ\psi:G\to H$ satisfies condition (1) of the theorem. Therefore, we can assume that $\psi$ satisfies (1). 
   For every $\gamma\in\Gamma$, consider now 
   $\epsilon_{\gamma}:B_{\gamma}\to B_{\gamma}, g+G_{\gamma}\mapsto \phi(g)_{\gamma}$, where $\phi(g)_{\gamma}$ denotes the coefficient of 
   $\phi(g)$ at $\gamma$. For each $\gamma$, $\epsilon_{\gamma}$ is well-defined: indeed, 
   if $g,h\in G^{\gamma}$ are such that $v(g-h)>\gamma$, then since $\phi$ satisfies condition (1) we have 
   $w(\phi(g)-\phi(h))=v(g-h)>\gamma$ so $\phi(g)_{\gamma}=\phi(h)_{\gamma}$.
   One easily sees that 
   $\epsilon_{\gamma}$ is a group isomorphism. Define 
   $\zeta:H\to H,(h_{\gamma})_{\gamma\in\Gamma}\mapsto (\epsilon_{\gamma}^{-1}(h_{\gamma}))_{\gamma\in\Gamma}$. 
   $\zeta$ is a group isomorphism, and it is easy to see that 
   $\zeta\circ\psi:G\to H$ satisfies all the conditions of the theorem.
  \end{proof}

We can now state 
  a Hahn's embedding theorem for compatible q.o.a.g's:
  
  \begin{Thm}\label{Hahnforqogroups}
   Let $(G,\precsim)$ be a torsion-free compatible q.o.a.g. and let $\archval$ be the archimedean valuation associated to 
   $\precsim$. Let $(\Gamma,(B_{\gamma})_{\gamma\in\Gamma})$ be the skeleton of $(G,\archval)$ and let 
   $\precsim_{\gamma}$ be the q.o induced by $\precsim$ on $B_{\gamma}$. Then there exists an embedding of quasi-ordered groups 
   from $(G,\precsim)$ into the compatible Hahn product $\Hahn{(B_{\gamma},\precsim_{\gamma})}$.
  \end{Thm}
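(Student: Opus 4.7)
The plan is to refine the construction of the improved Hahn embedding theorem (Theorem \ref{improvedHahnembeddingforvaluedgroups}) applied to the valued group $(G,\archval)$; note that $\archval$ is a $\Z$-module valuation by Proposition \ref{archvaliscoarening}. That theorem yields a group embedding $\phi:G\to\Hahn{B}$ with $w(\phi(g))=\archval(g)$ and leading coefficient $\phi(g)_\gamma=g+G_\gamma$ at $\gamma=\archval(g)$. To make $\phi$ respect the o-type/v-type split, I would execute the construction with cross-sections $\sigma_\gamma:B_\gamma\to G^\gamma$ chosen so that $\sigma_\gamma(B_\gamma^o)\subseteq G^o$. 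This is possible because any representative $g\in G^\gamma$ of an o-type element of $B_\gamma$ is itself o-type: a v-type representative would satisfy $g\sim -g$, and since $2g\notin G_\gamma$ (by $\Z$-modularity of $\archval$), the induced q.o.\ formula of Lemma \ref{quotient} would yield $g+G_\gamma\sim_\gamma-g+G_\gamma$, making $g+G_\gamma$ v-type. An inductive argument then shows that $\phi(G^o)\subseteq H^o$ and $\phi(G^v)\subseteq H^v$, and that $\phi|_{G^o}$ realises the classical Hahn embedding of the ordered group $(G^o,\leq)$.

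Next I would verify that $\phi$ preserves $\precsim$ in both directions, using Proposition \ref{formulafortheqo}. For $g,h\in G^o$, the lex order on $H^o$ matches the order on $G^o$ by the classical Hahn embedding (even when the leading coefficients agree, the lex comparison moves to the next level of the support). For $h\in G^v$, the comparison takes place on the valuational part $H/H^o$: if $\archval(g)\neq\archval(h)$ the min-support comparison suffices, and if $\archval(g)=\archval(h)=\gamma$ with $g-h\notin G_\gamma$, then the $\precsim_\gamma$-comparison on $B_\gamma$ matches $\precsim$ on $G$ by Proposition \ref{convexsubgroupProp}.

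The remaining delicate subcase is $g,h\in G^v$ with $\archval(g)=\archval(h)=\gamma$ and $g-h\in G_\gamma$, where the valuational Hahn product forces $\phi(g)\sim_H\phi(h)$. Here I would show $g\sim h$ in $G$: a strict $g\precnsim h$ would give $v(g+G^o)>v(h+G^o)$ (with $v$ the valuation on the valued part), hence by the strict ultrametric $v((g-h)+G^o)=v(h+G^o)$; but $g-h\in G_\gamma$ means $\archval(g-h)>\archval(h)$, and since $\archval$ coarsens the q.o.\ induced by $v$, this forces $v((g-h)+G^o)>v(h+G^o)$, a contradiction. The principal obstacle is the iterative cross-section choice: ensuring that \emph{all} coefficients of $\phi(g)$ (not only the leading one) lie in $B_\gamma^o$ when $g\in G^o$, which is handled by induction on the support of $\phi(g)$, using that $G^o$ is a subgroup so that subtracting $\sigma_\gamma(g+G_\gamma)\in G^o$ from $g\in G^o$ keeps the remainder in $G^o$.
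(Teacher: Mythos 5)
Your overall strategy is the paper's: apply Theorem \ref{improvedHahnembeddingforvaluedgroups} to $(G,\archval)$ and then check, case by case through Proposition \ref{formulafortheqo} and Remark \ref{remarkonquotientqo}(4), that the embedding preserves $\precsim$ in both directions. The genuine gap is in the step you yourself single out as the principal obstacle: ensuring that \emph{all} coefficients of $\phi(g)$ lie in $B_\gamma^o$ when $g\in G^o$. Your proposed fix --- re-running the construction with cross-sections $\sigma_\gamma:B_\gamma\to G^\gamma$ with $\sigma_\gamma(B_\gamma^o)\subseteq G^o$ and an ``induction on the support of $\phi(g)$'' by subtracting leading cross-section values --- does not work as stated. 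Theorem \ref{improvedHahnembeddingforvaluedgroups} (resting on the embedding theorem of \cite{SKuhlmann}, obtained via divisible hulls and a maximality argument) is a black box whose only coefficient control is condition (2) on the \emph{leading} coefficient; it is not built from, and cannot be forced to agree with, a prescribed family of cross-sections. Moreover the support of $\phi(g)$ is in general an infinite well-ordered set, so the leading-term subtraction process is a transfinite one: after infinitely many steps the ``remainder'' is no longer represented by an element of $G$, and nothing ties the coefficients of the given $\phi(g)$ beyond the first one to the chosen $\sigma_\gamma$'s. So the inclusion $\phi(G^o)\subseteq H^o$ is not established by your argument.

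The missing idea, which is what the paper's proof uses, is that no modification of $\phi$ is needed because the inclusion is automatic: if $g\in G^o$ is nonzero and $\delta=\archval(g)$, then $B_\gamma=B_\gamma^o$ for every $\gamma>\delta$. Indeed, if $\archval(x)=\gamma>\delta$ then, since $\archqo$ is a coarsening of $\precsim$, we cannot have $g\precsim x$, so $x\precnsim g$ and hence $x\in G^o$ by Proposition \ref{segin}; then $x+G_\gamma$ is o-type in $B_\gamma$ (this is exactly the quotient compatibility you correctly observed, cf.\ Remark \ref{remarkonquotientqo}(4)). Combined with condition (2) of Theorem \ref{improvedHahnembeddingforvaluedgroups} for the coefficient at $\delta$, this shows that any embedding furnished by that theorem already sends $G^o$ into $H^o$ and $G^v$ into $H^v$. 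Two smaller remarks: the justification ``$\archval$ coarsens the q.o.\ induced by $v$'' is false as a statement about all of $G$ (on $G^o$ the pullback of $v$ is trivial while $\archqo$ is not), though your delicate subcase can be repaired more directly --- $g-h\in G_\gamma$ gives $g-h\precnsim h$ because $\archqo$ coarsens $\precsim$, and then Proposition \ref{ultrametric} yields $g\sim h$ at once; and the assertion that $\phi|_{G^o}$ ``realises the classical Hahn embedding of $(G^o,\leq)$'' is both unnecessary and not quite justified (the relevant skeleton is that of $(G,\archval)$, not of the ordered group $(G^o,\leq)$); what is actually needed, and what the paper proves, is that for $g\precsim h$ in $G^o$ the coefficient of $\phi(h-g)$ at $\archval(h-g)$ is positive, which follows from condition (2) alone.
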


  \begin{proof}
   Let $(H,\etoile)$ denote the compatible Hahn product of the family 
   $(B_{\gamma},\precsim_{\gamma})_{\gamma\in\Gamma}$ and let $v$ denote the usual valuation 
   on $H$ (i.e $v(h)=\min\supp(h)$). We denote by $(F,\qoval)$ the 
    valuational product of the family $(B_{\gamma}/B^o_{\gamma})_{\gamma\in\Gamma}$.    
    We take a group embedding $\phi:G\to H$ as given by Theorem \ref{improvedHahnembeddingforvaluedgroups}.
    For $g\in G$ we denote by $g_{\gamma}$ the coefficient of $\phi(g)$ at $\gamma$. 
    We need the following claims:
   
   \begin{Claim}\label{claimone}
    For any $\gamma\in\Gamma$ and $g\in G$ with $\archval(g)=\gamma$, $g$ is v-type if and only if 
    $g+G_{\gamma}$ is v-type.
   \end{Claim}
\begin{proof}
 It follows from Remark \ref{remarkonquotientqo}(4).
\end{proof}
   
   \begin{Claim}\label{Claimtwo}
    If $h\in G$ is o-type and $\delta=\archval(h)$, then for every 
    $\gamma>\delta$, $B_{\gamma}^o=B_{\gamma}$.    
   \end{Claim}

   \begin{proof}
      Let $g\in G$ with $\archval(g)=\gamma$.  Since $\archval(g)>\archval(h)$ and since $\archqo$ is a coarsening of 
      $\precsim$, we must have $g\precsim h$. By Proposition \ref{segin}, it follows that $g$ is o-type.
      By Claim \ref{claimone}, $g+G_{\gamma}$ is then o-type. This shows that every element of $B_{\gamma}$ 
      is o-type.
   \end{proof}

  \begin{Claim}\label{claimthree}
   For any $g\in G$ and $\delta:=\archval(g)$, we have 
   $\min\supp((g_{\gamma}+B_{\gamma}^o)_{\gamma\in\Gamma})\geq\delta$. If moreover $g\in G^v$, then we have 
   equality.
  \end{Claim}
  \begin{proof}
   By condition (1) of Theorem \ref{improvedHahnembeddingforvaluedgroups}, we have 
   $\delta=v(\phi(g))$. It then follows from the definition of 
   $v$ and $\delta$ that $g_{\epsilon}=0$ for every $\epsilon<\delta$, hence 
   $g_{\epsilon}+B_{\epsilon}^o=0$ for every $\epsilon<\delta$ hence 
   $\min\supp((g_{\gamma}+B_{\gamma}^o)_{\gamma\in\Gamma})\geq\delta$. Now if $g\in G^v$, then 
   by Claim \ref{claimone} $g+G_{\delta}$ is v-type, and by condition (2) of 
   Theorem \ref{improvedHahnembeddingforvaluedgroups} we have 
   $g_{\delta}=g+G_{\delta}$, hence $g_{\delta}\notin B_{\delta}^o$ hence 
   $\delta\in\supp((g_{\gamma}+B_{\gamma}^o)_{\gamma\in\Gamma})$, which proves 
   $\delta=\min\supp((g_{\gamma}+B_{\gamma}^o)_{\gamma\in\Gamma})$.   
  \end{proof}

    Now let us show the theorem.  
   Take $g,h\in G$ and set $\alpha:=\archval(g)=v(\phi(g))$ and
   $\beta:=\archval(h)=v(\phi(h))$. We want to show that 
   $g\precsim h\Leftrightarrow\phi(g)\etoile\phi(h)$. Without loss of generality, we assume $g\neq h$.
   We first assume that 
    $g\precsim h$ and we show that $\phi(g)\etoile\phi(h)$.
    Note that 
   $g\precsim h$ implies $\alpha\geq\beta$ by Proposition \ref{archvaliscoarening}. 
   We use the formula of Proposition \ref{formulafortheqo}. We first 
   consider the case $h\in G^o$, which implies $g\in G^o$ by Proposition \ref{segin}. 
   Since $g,h$ are o-type, it follows from Claim \ref{claimone} and from condition (2) of Theorem 
   \ref{improvedHahnembeddingforvaluedgroups} that $g_{\alpha}$ and $h_{\beta}$ are o-type. Moreover, it follows from 
   Claim \ref{Claimtwo} that 
    $B_{\epsilon}^o=B_{\epsilon}$ for every $\epsilon>\beta$. It follows that $g_{\epsilon},h_{\epsilon}\in B_{\epsilon}^o$ for every 
   $\epsilon\geq\beta$, and since $\archval(g),\archval(h)\geq\beta$  this implies that 
   $\phi(g)$ and  $\phi(h)$ both lie in $H^o=\Hahn{B^o}$. Now set $\epsilon:=\archval(g-h)=v(\phi(g)-\phi(h))$.
   Since $g\precsim h\in G^o$, we have 
   $0\precsim h-g$, hence 
   $0\precsim_{\epsilon}(h-g)+G_{\epsilon}$, and 
   $(h-g)+G_{\epsilon}=(h-g)_{\epsilon}$ by condition (2) of \ref{improvedHahnembeddingforvaluedgroups}.
   We thus have 
   $0\precsim h_{\epsilon}-g_{\epsilon}$, and since 
   $g_{\epsilon},h_{\epsilon}\in B_{\epsilon}^o$ this implies
   $g_{\epsilon}\precsim_{\epsilon}h_{\epsilon}$.
    By definition of $\etoile$ on $H^o$, this implies 
   $g\etoile h$.   
   Now consider the case where $h\in G^v\wedge g+G^o\precsim h+G^o$. It follows from Claim \ref{claimone} that
   $h_{\beta}$ is v-type, hence $\phi(h)\notin H^o$. Now note that 
   $g_{\beta}\precsim h_{\beta}$: indeed, if $\alpha>\beta$ then 
   $g_{\beta}=0$, and since $h_{\beta}$ is v-type we have $0\precsim_{\beta}h_{\beta}$; if 
   $\beta=\alpha$, we have $g_{\beta}=g+G_{\beta}$ and $h_{\beta}=h+G_{\beta}$, and $g\precsim h$ implies  
   $g+G_{\beta}\precsim_{\beta}h+G_{\beta}$ by Remark \ref{remarkonquotientqo}(4), hence
   $g_{\beta}\precsim h_{\beta}$. By Remark \ref{remarkonquotientqo}(4), 
   $g_{\beta}\precsim h_{\beta}$ implies $g_{\beta}+B_{\beta}^o\precsim h_{\beta}+B_{\beta}^o$. 
   Moreover, Claim \ref{claimthree} implies that 
    $\beta=\min(\min\supp((h_{\gamma}+B_{\gamma}^o)_{\gamma\in\Gamma}),\min\supp((g_{\gamma}+B_{\gamma}^o)_{\gamma\in\Gamma}))$.
   It then follows from the definition 
   of $\qoval$ that $((g_{\gamma}+B_{\gamma}^o)_{\gamma\in\Gamma})\qoval((h_{\gamma}+B_{\gamma}^o)_{\gamma\in\Gamma})$, which by definition of 
   $\etoile$ implies $g\etoile h$.

   This shows $g\precsim h\Rightarrow\phi(g)\precsim\phi(h)$, let us show the converse. 
   Assume that $\phi(g)\etoile\phi(h)$ holds and let us show that $g\precsim h$.
   Note that if $\beta<\alpha$, then 
   since $\archval$ is a coarsening of $\precsim$ we have 
   $g\precnsim h$, so we can assume $\beta\geq\alpha$.
   We first consider the case
   $\phi(h)\in H^o$, which implies $\phi(g)\in H^o$ by Proposition \ref{segin}. By definition of 
   $\etoile$ on $H^o$, we have $g_{\gamma}\precnsim_{\gamma}h_{\gamma}$ for $\gamma=v(\phi(g)-\phi(h))$, and since 
   $h_{\gamma},g_{\gamma}\in B_{\gamma}^o$ this implies $0\precnsim_{\gamma}(h-g)_{\gamma}$. Since 
   $\gamma=v(\phi(g)-\phi(h))$, we have 
   $(h-g)_{\gamma}=(h-g)+G_{\gamma}$ by condition (2) of Theorem \ref{improvedHahnembeddingforvaluedgroups}, hence 
   $0\precnsim_{\gamma}(h-g)_{\gamma}$,
   which by Remark 
   \ref{remarkonquotientqo}(4) implies $0\precnsim h-g$. By Claim \ref{claimone}, 
   $h-g$ is o-type, so the previous inequality implies $g\precsim h$.
   Now assume 
   that $\phi(h)\in H^v$.  By definition of $\etoile$, this implies 
   $(g_{\gamma}+B_{\gamma}^o)_{\gamma\in\Gamma}\qoval(h_{\gamma}+B_{\gamma}^o)_{\gamma\in\Gamma}$.
   By definition of $\qoval$, this implies 
   $\min(\supp((h_{\gamma}+B_{\gamma}^o)_{\gamma\in\Gamma}))\leq\min(\supp((g_{\gamma}+B_{\gamma}^o)_{\gamma\in\Gamma}))$, 
   which by Claim \ref{claimthree} implies 
   $\beta\leq \alpha$, hence 
   $\beta=\alpha=\min(\min(\supp((h_{\gamma}+B_{\gamma}^o)_{\gamma\in\Gamma})),\min(\supp((g_{\gamma}+B_{\gamma}^o)_{\gamma\in\Gamma}))))$.
    The definition of 
   $\qoval$ then also implies  $g_{\beta}+B_{\beta}^o\precsim_{\beta}h_{\beta}+B_{\beta}^o$. 
   Since $h_{\beta}\notin B_{\beta}^o$, this inequality 
   implies 
   $g_{\beta}\precsim h_{\beta}$ by Remark \ref{remarkonquotientqo}(4).
   Since $\beta=\archval(g)=\archval(h)$, condition (2) of Theorem 
   \ref{improvedHahnembeddingforvaluedgroups} implies 
   $g_{\beta}=g+G_{\beta}$ and $h_{\beta}=h+G_{\beta}$, hence 
   $g+G_{\beta}\precsim_{\delta}h+G_{\beta}$, which by Remark 
   \ref{remarkonquotientqo}(4) implies $g\precsim h$. This finishes the proof.
   
  \end{proof}

  \subsection{Elementary equivalence and products}\label{Fefermanvaughtsection}
  
    In view of Theorem \ref{secondstructure}, it is natural to ask whether elementary equivalence of two compatible q.o.a.g's is equivalent to
    the elementary equivalence of their respective ordered parts and the elementary equivalence of their valued parts. This is the object of this 
    subsection.     We first show that one implication is always true : if two compatible q.o.a.g's are elementarily equivalent, then 
   so are their ordered parts and so are their valued parts (Proposition \ref{elementaryequivalenceofparts}). 
    The converse fails in general (see example \ref{notproductexample}), 
    but we show then that is holds for groups which are obtained as the compatible product of their ordered parts
    by their valued parts. In other words, we show that the compatible Hahn product of ordered groups by valued groups preserves 
    elementary equivalence (Theorem \ref{Fefermanvaughttheorem}).

    We let $\langue$ denote the language of quasi-ordered groups: $\langue=\{0,+,-,\precsim\}$ ($-$ is interpreted as a unary relation). 
    Note that the atomic formulas are all formulas of the 
form $P(\bar{x})\precsim Q(\bar{x})$ or $P(\bar{x})=0$, where 
$P(\bar{x}),Q(\bar{x})$ are expressions of the form $\sum_{i=1}^kn_ix_i$ with $n_1,\dots,n_k\in\Z$.
Note also that 
for any compatible q.o.a.g $G$, $G^o$ is definable in $G$ by the formula $x=0\vee -x\nsim x$, which we will thus abbreviate as the 
formula $x\in G^o$. Finally, note that for any term $P(\bar{x})$ and for
every tuples $\bar{g},\bar{h}\subseteq G$ we have $P(\bar{g}+\bar{h})=P(\bar{g})+P(\bar{h})$ and 
$P(\bar{g}+G^o)=P(\bar{g})+G^o$.

  \begin{Lem}\label{reversefefermanvaughtlemma}
         
         Let $\phi(\bar{x})$ be a formula of $\langue$. Then there exists two formulas $\phi^o(\bar{x}),\phi^v(\bar{x})$
         in $\langue$, each of the same arity as $\phi$, such that, if
         $(G,\precsim)$ is a compatible q.o.a.g with ordered part $(G^o,\leq)$ and valued part $(H,\precsim)$, we have:         
         \begin{enumerate}[(i)]
          \item For any $\bar{g}_o\subseteq G^o$, $G^o\vDash\phi(\bar{g}_o)$ if and only if $G\vDash\phi^o(\bar{g}_o)$
          \item For any $\bar{g}_v\subseteq H$,   $H\vDash\phi(\bar{g}_v)$ if and only if for all $\bar{g}\subseteq G$,
           $\bar{g}+G^o=\bar{g}_v\Rightarrow G\vDash \phi^v(\bar{g})$ if and only if there exists $\bar{g}\subseteq G$ with 
           $\bar{g}+G^o=\bar{g}_v$ and
          $G\vDash \phi^v(\bar{g})$.
         \end{enumerate}

        \end{Lem}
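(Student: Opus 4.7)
The plan is to construct $\phi^o$ and $\phi^v$ by induction on the structure of $\phi$. Two facts make this feasible: first, $G^o$ is $\langue$-definable inside $G$ by the formula $x = 0 \vee -x \nsim x$ (which I will abbreviate $x \in G^o$); second, the induced q.o on $G/G^o$ is given by the explicit formula from Lemma \ref{quotient}, and since all terms of $\langue$ are $\Z$-linear we have $P(\bar{g} + G^o) = P(\bar{g}) + G^o$, so the truth value of any atomic formula in the quotient evaluated at $\bar{g}_v = \bar{g} + G^o$ depends only on the cosets.

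For the base case, I would handle the two forms of atomic formula, $P(\bar{x}) = 0$ and $P(\bar{x}) \precsim Q(\bar{x})$. Since $G^o$ is a subgroup on which $\precsim$ restricts to the order, I can simply set $\phi^o :\equiv \phi$ in both cases. For $\phi^v$, using the formula from Lemma \ref{quotient} together with Remark \ref{sameHclassareequivalent}(4), I would take $(P = 0)^v :\equiv P(\bar{x}) \in G^o$ and $(P \precsim Q)^v :\equiv (P(\bar{x}) - Q(\bar{x}) \in G^o) \vee (P(\bar{x}) - Q(\bar{x}) \notin G^o \wedge P(\bar{x}) \precsim Q(\bar{x}))$. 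Verifying (ii) at the atomic level is then a direct translation of the definition of the induced q.o on $G/G^o$, and the fact that these formulas are coset-invariant yields the equivalence of both the ``for all'' and ``exists'' reformulations in (ii).

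For the inductive step, Boolean connectives distribute over the translations as expected: $(\neg\phi)^\epsilon \equiv \neg\phi^\epsilon$ and $(\phi \wedge \psi)^\epsilon \equiv \phi^\epsilon \wedge \psi^\epsilon$ for $\epsilon \in \{o,v\}$. The quantifier case is where care is required. For $\phi^o$ I would relativize, setting $(\exists y\, \psi)^o :\equiv \exists y\, (y \in G^o \wedge \psi^o)$, so that an existential witness from $G^o$ is selected inside $G$. For $\phi^v$ no relativization is needed: I would set $(\exists y\, \psi)^v :\equiv \exists y\, \psi^v$, since every $h_v \in H$ lifts to some $h \in G$ and conversely every $h \in G$ projects to $h + G^o \in H$. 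Universal quantifiers are handled dually by setting $(\forall y\, \psi)^\epsilon :\equiv \neg(\exists y\, \neg\psi)^\epsilon$.

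The main subtlety lies in verifying the triple equivalence in clause (ii) at the existential step: one must confirm that the existence of a single witness $h \in G$ for $\psi^v(\bar{g}, h)$ (given one lift $\bar{g}$ of $\bar{g}_v$) is in fact equivalent to witnesses existing for \emph{every} such lift. This follows immediately from the inductive hypothesis applied to $\psi^v$, since by induction the truth of $\psi^v(\bar{g}, h)$ depends only on the pair of cosets $(\bar{g} + G^o,\, h + G^o)$. Beyond this bookkeeping, no genuinely new group-theoretic input is needed: the whole argument rests on the definability of $G^o$ in $G$ and the explicit description of the q.o on $G/G^o$ already established.
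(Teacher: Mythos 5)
Your proposal is correct and follows essentially the same route as the paper: part (i) by relativizing quantifiers to the definable subgroup $G^o$, and part (ii) by induction on formulas, with the atomic case handled via the induced quasi-order on $G/G^o$ and coset-invariance (Remark \ref{sameHclassareequivalent}(4)) giving the ``for all lifts iff some lift'' equivalence through the quantifier step. Your atomic formula for $(P\precsim Q)^v$ differs cosmetically from the paper's (you use the quotient formula of Lemma \ref{quotient}, the paper uses an equivalent disjunction), but the argument is the same.
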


        \begin{proof}
         For (i): write $\phi$ in prenex form: 
         $\phi(\bar{x})\equiv Q_1y_1\dots Q_ny_n\psi(\bar{y},\bar{x})$, where each $Q_i$ is a quantifier and 
         $\psi$ is quantifier-free.
         . Since $G^o$ is definable in $G$ 
         (by the formula $x=0\vee -x\nsim x$), we can define the formula 
         $\phi^o(\bar{x})\equiv Q_1 y_1\in G^o\dots Q_ny_n\in G^o\psi(\bar{y},\bar{x})$, and it is then easy to see that 
         $\phi^o$ has the desired property.
        
         For (ii): We proceed by induction on $\phi$. Assume first that $\phi$ is atomic. If 
         $\phi$ has the form $P(\bar{x})=0$, define $\phi^v(\bar{x})\equiv P(\bar{x})\in G^o$. Now
         assume that          
         $\phi(\bar{x})\equiv P(\bar{x})\precsim Q(\bar{x})$ and 
         define $\phi^v(\bar{x})$ as $(P(\bar{x})\in G^o\wedge Q(\bar{x})\in G^o)\vee(Q(\bar{x})\notin G^o\wedge \phi(\bar{x}))$.
         Assume that $H\vDash\phi(\bar{g}_v)$ and take $\bar{g}\subseteq G$ with 
         $\bar{g}+G^o=\bar{g}_v$. We have  $H\vDash P(\bar{g}_v)\precsim Q(\bar{g}_v)$. If 
         $Q(\bar{g}_v)=0$, then since $\precsim$ is valuational on $H$ we must have 
         $P(\bar{g}_v)=0$, hence 
         $G\vDash P(\bar{g})\in G^o\wedge Q(\bar{g})\in G^o$. If 
         $Q(\bar{g}_v)\neq0$ Remark \ref{remarkonquotientqo}(4) implies that 
         $P(\bar{g})\precsim Q(\bar{g})$. This shows that 
         $G\vDash \phi^v(\bar{g})$. Conversely, assume that there exists a 
         $\bar{g}\subseteq G$ such that $\bar{g}+G^o=\bar{g}_v$ and $G\vDash \phi^v(\bar{g})$. 
	 If $G\vDash(P(\bar{g})\in G^o\wedge Q(\bar{g})\in G^o)$ then 
         $P(\bar{g}_v)=Q(\bar{g}_v)=0$, so in particular 
         $H\vDash P(\bar{g}_v)\precsim Q(\bar{g}_v)$. If $G\vDash Q(\bar{g})\notin G^o\wedge \phi(\bar{g})$, then 
         Remark \ref{remarkonquotientqo}(4) implies that 
         $H\vDash P(\bar{g}_v)\precsim Q(\bar{g}_v)$. This shows that $H\vDash\phi(\bar{g}_v)$ and concludes the case where $\phi$ 
         is atomic.  Assume now that       
         $\phi\equiv\neg\psi$ and set $\psi^v:\equiv\neg\phi^v$. If $H\vDash\phi(\bar{g}_v)$, then 
         $H\nvDash\psi(\bar{g}_v)$, so by induction hypothesis we have 
         $G\nvDash\psi^v(\bar{g})$ for all  $\bar{g}\subseteq G$ with $\bar{g}+G^o=\bar{g}_v$, hence 
         $G\vDash\phi^v(\bar{g})$. Conversely, if there is $\bar{g}\subseteq G$ with $\bar{g}+G^o=\bar{g}_v$ and 
         $G\vDash\phi^v(\bar{g})$, then $G\nvDash\psi^v(\bar{g})$ which by induction hypothesis means 
         $H\nvDash\psi(\bar{g}_v)$ hence $H\vDash\phi(\bar{g}_v)$.         
         If $\phi\equiv\phi_1\wedge\phi_2$, one can easily show that 
          $\phi^v:\equiv\phi_1^v\wedge\phi_2^v$ satisfies the desired property and if $\phi\equiv\exists y\psi(y,\bar{x})$, 
         it is also easy to see that  $\phi^v\equiv \exists y\psi^v(y,\bar{x})$ is suitable.
        \end{proof}

        \begin{Prop}\label{elementaryequivalenceofparts}
         Let $(G_1,\precsim_1)$ and $(G_2,\precsim_2)$ be two compatible q.o.a.g's such that 
         $G_1\equiv G_2$. Then the ordered part of $G_1$ is elementarily equivalent to the ordered part of $G_2$ and the valued 
         part of $G_1$ is elementarily equivalent to the valued part of $G_2$.        
        \end{Prop}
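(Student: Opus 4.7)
The plan is to derive this directly from Lemma \ref{reversefefermanvaughtlemma} by specializing that lemma to sentences (i.e.\ formulas of arity $0$). Let $(G_i^o,\leq_i)$ denote the ordered part and $(H_i,\precsim_i)$ the valued part of $(G_i,\precsim_i)$ for $i=1,2$.

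I would first handle the ordered parts. Given any $\langue$-sentence $\phi$, Lemma \ref{reversefefermanvaughtlemma}(i) applied with no free variables produces a sentence $\phi^o$ such that $G_i^o\vDash\phi$ if and only if $G_i\vDash\phi^o$, for both $i=1,2$ (the construction of $\phi^o$ there, by relativizing all quantifiers to the definable subset $G^o$, works uniformly in the model). Since $G_1\equiv G_2$ gives $G_1\vDash\phi^o\Leftrightarrow G_2\vDash\phi^o$, we immediately conclude $G_1^o\vDash\phi\Leftrightarrow G_2^o\vDash\phi$, so $G_1^o\equiv G_2^o$.

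For the valued parts I would proceed analogously using part (ii) of the lemma. Given an $\langue$-sentence $\phi$, the lemma provides a sentence $\phi^v$ such that $H_i\vDash\phi$ if and only if $G_i\vDash\phi^v$ (in the arity-$0$ case the ``for all lifts'' and ``exists a lift'' quantifications in the statement of (ii) are vacuous, since the only tuple of length $0$ in $H_i$ lifts trivially to the empty tuple in $G_i$). Hence $H_1\vDash\phi\Leftrightarrow G_1\vDash\phi^v\Leftrightarrow G_2\vDash\phi^v\Leftrightarrow H_2\vDash\phi$, giving $H_1\equiv H_2$.

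There is really no substantive obstacle once Lemma \ref{reversefefermanvaughtlemma} is in hand; the only thing to verify is that the translation $\phi\mapsto\phi^o$ (respectively $\phi\mapsto\phi^v$) is defined by pure syntactic recursion on $\phi$ and therefore does not depend on which compatible q.o.a.g.\ we apply it in, so that the same translated sentence can be evaluated in both $G_1$ and $G_2$. This is clear from the inductive construction in the proof of the lemma, so the argument above is complete.
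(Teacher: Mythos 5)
Your proof is correct and follows essentially the same route as the paper: both specialize Lemma \ref{reversefefermanvaughtlemma} to sentences, use the uniformity of the translations $\phi\mapsto\phi^o$ and $\phi\mapsto\phi^v$ across models, and transfer via $G_1\equiv G_2$. Your explicit remark that the lift quantification in part (ii) is vacuous for sentences is a fine (if implicit in the paper) clarification, but the argument is the same.
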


        \begin{proof}
           Assume that 
         $G_1\equiv G_2$ holds and let $\phi$ be a sentence of $\langue$. Take $\phi^o,\phi^v$ as in Lemma \ref{reversefefermanvaughtlemma}. 
         If $G_1^o\vDash\phi$, then $G_1\vDash\phi^o$, hence by assumption $G_2\vDash\phi^o$, 
         hence by choice of $\phi^o$: $G_2^o\vDash\phi$. We could show similarly that 
         $H_1\vDash \phi$ implies $H_2\vDash\phi$, hence $G_1^o\equiv G_2^o$ and 
         $H_1\equiv H_2$.
        \end{proof}
        
     The next example shows that the converse of Proposition \ref{elementaryequivalenceofparts} is false in general:

        \begin{Ex}\label{notproductexample}
        Take $G_1:=\Z$ with ordered part $G_1^o:=5\Z$ with the usual order and valued part $H_1:=\Z/5\Z$ equipped with the trivial valuation. 
         Now take $G_2:=G_1^o\cprod H_1$. 
         Since $G_2$ has torsion and $G_1$ does not, it is clear that $G_1$ and $G_2$ are not elementarily equivalent.
        \end{Ex}
        
  However, the next Lemma shows that the converse of Proposition \ref{elementaryequivalenceofparts} is true if we restrict ourselves to 
  compatible q.o.a.g's which are obtained as the product of their ordered part by their valued part (which is not the case of $G_1$ in example 
  \ref{notproductexample}):
  
   \begin{Lem}\label{fefermanvaughtlemma}
       Let $\phi(\bar{x})$ be a formula of
       $\langue$. Then there is $n\in\N$ such that there are $2n$ formulas \newline
       $\phi_1^o(\bar{x}),\dots,\phi_n^o(\bar{x}),\phi_1^v(\bar{x}),\dots,\phi_n^v(\bar{x})$, each having the same arity as $\phi$, such that the following holds: 
       
       For any ordered abelian group $G^o$ and any valuationally quasi-ordered group $H$, 
     for any $\bar{g}=\bar{g}_o+\bar{g}_v$ in $G:=G^o\cprod H$, 
        we have: $G\models\phi(\bar{g})$ if and only if there exists $i\in\{1,\dots,n\}$ such that 
        $G^o\models\phi_i^o(\bar{g}_o)$ and $H\models\phi_i^v(\bar{g}_v)$.       
      \end{Lem}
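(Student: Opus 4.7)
The plan is to prove the result by induction on the structure of $\phi$. Two features make the induction work cleanly. First, because $G = G^o \cprod H$ has $G^o$ as a direct summand with complement isomorphic to $H$, every $g \in G$ decomposes uniquely as $g = g_o + g_v$ with $g_o \in G^o$ and $g_v \in H$, and for every term $P(\bar{x})$ we have $P(\bar{g}) = P(\bar{g}_o) + P(\bar{g}_v)$ with $P(\bar{g}_o) \in G^o$ and $P(\bar{g}_v) \in H$. Second, Proposition \ref{formulafortheqo} expresses $\precsim$ on $G$ explicitly in terms of the order $\leq_o$ on $G^o$ and the valuational q.o on $H$, which is exactly what allows the atomic case to split.

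I would start with the atomic base case, which is where Proposition \ref{formulafortheqo} does the real work. A formula $P(\bar{x}) = 0$ is immediate: since $G^o \cap H = \{0\}$, $P(\bar{g}) = 0$ in $G$ iff $P(\bar{g}_o) = 0$ in $G^o$ and $P(\bar{g}_v) = 0$ in $H$, so one takes $n = 1$ with $\phi_1^o \equiv \phi_1^v \equiv (P(\bar{x}) = 0)$. For $P(\bar{x}) \precsim Q(\bar{x})$, Proposition \ref{formulafortheqo} gives a disjunction of two cases: (a) both $P(\bar{g})$ and $Q(\bar{g})$ lie in $G^o$, which amounts to $P(\bar{g}_v) = 0 \wedge Q(\bar{g}_v) = 0$ in $H$, together with $P(\bar{g}_o) \leq_o Q(\bar{g}_o)$ in $G^o$; or (b) $Q(\bar{g}) \in G^v$, which amounts to $Q(\bar{g}_v) \neq 0$ in $H$, together with $P(\bar{g}_v) \precsim Q(\bar{g}_v)$ in $H$ (using that the q.o induced by $\precsim$ on $G/G^o \cong H$ is precisely the valuational q.o of $H$). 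This yields $n = 2$ with $\phi_1^o \equiv (P(\bar{x}) \precsim Q(\bar{x}))$, $\phi_1^v \equiv (P(\bar{x}) = 0 \wedge Q(\bar{x}) = 0)$, $\phi_2^o \equiv (0 = 0)$, and $\phi_2^v \equiv (\neg(Q(\bar{x}) = 0) \wedge P(\bar{x}) \precsim Q(\bar{x}))$.

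The inductive step is a routine Feferman--Vaught manipulation that keeps formulas in disjunction-of-conjunctions form. For conjunction, $(\bigvee_i \phi_i^o \wedge \phi_i^v) \wedge (\bigvee_j \psi_j^o \wedge \psi_j^v)$ distributes to $\bigvee_{i,j} (\phi_i^o \wedge \psi_j^o) \wedge (\phi_i^v \wedge \psi_j^v)$. For negation, $\neg \bigvee_{i=1}^n (\phi_i^o \wedge \phi_i^v)$ rewrites as $\bigwedge_i (\neg\phi_i^o \vee \neg\phi_i^v)$, which I would expand over all choice functions $f : \{1, \ldots, n\} \to \{o, v\}$ into $\bigvee_f (\bigwedge_{f(i) = o} \neg \phi_i^o) \wedge (\bigwedge_{f(i) = v} \neg \phi_i^v)$, again in the required form. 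For the existential, if $\phi \equiv \exists y\, \psi(y, \bar{x})$, I would apply the inductive hypothesis to $\psi$ to obtain formulas $\psi_i^o(y, \bar{x}), \psi_i^v(y, \bar{x})$; then using the decomposition $y = y_o + y_v$ and the fact that the choices of $y_o \in G^o$ and $y_v \in H$ are independent, $G \models \exists y\, \psi(y, \bar{g})$ is equivalent to $\bigvee_i \bigl[\, G^o \models \exists y\, \psi_i^o(y, \bar{g}_o)\ \text{and}\ H \models \exists y\, \psi_i^v(y, \bar{g}_v)\, \bigr]$, so I set $\phi_i^o \equiv \exists y\, \psi_i^o(y, \bar{x})$ and $\phi_i^v \equiv \exists y\, \psi_i^v(y, \bar{x})$.

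The main delicate point is the atomic case for $P \precsim Q$, where Proposition \ref{formulafortheqo} must be translated into a condition on $\bar{g}_o$ and $\bar{g}_v$ separately, and one uses the splitting $G = G^o \oplus H$ to express ``$P(\bar{g}) \in G^o$'' purely in terms of the $H$-component $P(\bar{g}_v)$. Once this atomic step is set up correctly, the remainder is syntactic bookkeeping to maintain the disjunctive normal form, and in particular the required formulas all have the same arity as $\phi$ since the construction never adds or removes free variables.
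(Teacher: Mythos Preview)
Your proof is correct and follows essentially the same approach as the paper's: the same induction on formulas, the same splitting of the atomic case $P\precsim Q$ via Proposition~\ref{formulafortheqo} into the two disjuncts (with the identical formulas, up to swapping the indices $1$ and $2$), the same treatment of the existential quantifier by pushing $\exists y$ into each $\psi_i^o,\psi_i^v$, and the same handling of negation by indexing over choice functions (equivalently, subsets of $\{1,\dots,n\}$). The only cosmetic difference is that you treat conjunction in the inductive step while the paper treats disjunction, which is immaterial since negation is handled separately.
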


        \begin{proof}
         We identify $H$ with $G/G^o$. Note that the q.o induced by $\precsim$ on $G/G^o$ coincides with the q.o of $H$.
         We proceed by induction on $\phi$. We first assume that $\phi$ is an atomic formula. 
         If $\phi$ is of the form 
         $P(\bar{x})=0$, set $n=1$ and $\phi_1^o\equiv\phi_1^v\equiv\phi$. Assume that $\phi$ is of the form $P(\bar{x})\precsim Q(\bar{x})$. Set $n=2$ and 
         define $\phi_1^o(\bar{x}):\equiv (\bar{x}=\bar{x})$, $\phi_1^v(\bar{x}):\equiv (Q(\bar{x})\neq0\wedge\phi(\bar{x}))$, 
         $\phi_2^o:\equiv\phi$ and $\phi_2^v(\bar{x}):\equiv (Q(\bar{x})=P(\bar{x})=0)$. We must check that these formulas satisfy the desired condition. 
         Note that for any  $G^o,H,\bar{g}$ as above, we have 
         $P(\bar{g})=P(\bar{g}_o)+P(\bar{g}_v)$ with $P(\bar{g}_o)\in G^o$ and $P(\bar{g}_v)\in H$, and in  particular we have 
         $P(\bar{g})+G^o=P(\bar{g}_v)$ and
         $P(\bar{g})\in G^o$ if and only if $P(\bar{g}_v)=0$. With this remark in mind, it follows directly from 
         Proposition \ref{formulafortheqo} that the formulas 
         $\phi_1^o,\phi_2^o,\phi_1^v,\phi_2^v$ satisfy the condition we want.
         This settles the case where $\phi$ is 
         atomic. 
              If 
         $\phi\equiv\psi\vee\chi$, and if $\psi_1^o,\dots,\psi_k^o,\psi_1^v,\dots,\psi_k^v,\chi_1^o,\dots,\chi_l^o,\chi_1^v,\dots,\chi_l^v$ are 
         the desired formulas for $\psi$ and $\chi$, we simply set 
         $n:=k+l$, $\phi_i^o:\equiv\psi_i^o,\phi_i^v:\equiv\psi_i^v$ for $1\leq i\leq k$ and 
         $\phi_i^o:\equiv\chi_i^o,\phi_i^v:\equiv\chi_i^v$ for $k<i\leq n$. 
         Now assume that $\phi\equiv\exists y\psi(y,\bar{x})$ and let $\psi_1^o,\dots,\psi_k^o,\psi_1^v,\dots,\psi_k^v$ be the desired formulas for 
         $\psi$. Define $n:=k$,     
         $\phi_i^o:\equiv\exists y\psi_i^o(y,\bar{x})$ and $\phi_i^v:\equiv\exists y\psi_i^v(y,\bar{x})$ for every $i\in\{1,\dots,n\}$.
         If $G\vDash\phi(\bar{g})$, then there is $h\in G$ with $G\vDash \psi(h,\bar{g})$, which implies by induction hypothesis that 
         there is $i$ with $G^o\vDash\psi_i^o(h_o,\bar{g}_o)$ and $H\vDash\psi_i^v(h_v,\bar{g}_v)$, hence 
         $G^o\vDash\phi_i^o(\bar{g}_o)$ and $H\vDash\phi_i^v(\bar{g}_v)$. Conversely, if we assume that 
         $G^o\vDash\phi_i^o(\bar{g}_o)$ and $H\vDash\phi_i^v(\bar{g}_v)$, then there is some 
         $h_o\in G^o$ and $h_v\in H$ with $G^o\vDash\psi_i^o(h_o,\bar{g}_o)$ and $H\vDash\psi_i^v(h_v,\bar{g}_v)$, and by induction hypothesis we then 
         have $G\vDash\psi(h_o+h_v,\bar{g})$ hence $G\vDash\phi(\bar{g})$. This shows that the formulas 
         $\phi_1^o,\dots,\phi_n^o,\phi_1^v,\dots,\phi_n^v$ have the desired property.

         Now we just have to consider the case 
         $\phi\equiv\neg\psi$. Let $\psi_1^o,\dots,\psi_k^o,\psi_1^v,\dots,\psi_k^v$ be given.
         Let $P:=\mathcal{P}(\{1,\dots,k\})$ denote the power set of $\{1,\dots,k\}$. 
         For any $I\in P$, we define $\phi_I^o,\phi_I^v$ as follows: 
         $\phi_I^o\equiv\bigwedge_{i\in I}\neg\psi_i^o$ and $\phi_I^v\equiv\bigwedge_{i\notin I}\neg\psi_i^v$. Now let us check that 
         the formulas $(\phi_I^o)_{I\in P}$ and $(\phi_I^v)_{I\in P}$ satisfy the desired property. 
         Assume that $G\vDash\phi(\bar{g})$, so $G\nvDash\psi(\bar{g})$. By induction hypothesis, this means that for all 
         $i\in\{1,\dots,k\}$, either $G^o\nvDash\psi_i^o(\bar{g}_o)$ or $H\nvDash\psi_i^v(\bar{g}_v)$. Choose $I\in P$ as the 
         set of all $i$ with $G^o\nvDash\psi_i^o(\bar{g}_o)$. Then $G^o\vDash\phi_I^o(\bar{g}_o)$ and 
         $H\vDash\phi_I^v(\bar{g}_v)$. Conversely, assume there is $I\in P$ with 
         $G^o\vDash\phi_I^o(\bar{g}_o)$ and 
         $H\vDash\phi_I^v(\bar{g}_v)$. Then for any $i\in\{1,\dots,k\}$, we either have 
         $G^o\nvDash\psi_i^o(\bar{g}_o)$ ( when $i\in I$) or $H\nvDash\psi_i^v(\bar{g}_v)$ (when $i\notin I$). By induction hypothesis, this means that 
         $G\nvDash\psi(\bar{g})$.

        \end{proof}

        An immediate consequence of Lemma \ref{fefermanvaughtlemma} is the following Theorem:

        \begin{Thm}\label{Fefermanvaughttheorem}
         Let $G_1^o,G_2^o$ be two ordered abelian groups and $H_1,H_2$ two valuationally quasi-ordered groups. 
           Then $G_1^o\cprod H_1\equiv G_2^o\cprod H_2$ if and only if 
           $G_1^o\equiv G_2^o$ and $H_1\equiv H_2$.
        \end{Thm}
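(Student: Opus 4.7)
The plan is to derive this as a direct corollary of the preparatory results: Proposition \ref{elementaryequivalenceofparts} handles the forward direction, and Lemma \ref{fefermanvaughtlemma} handles the reverse direction. There is no substantive obstacle since the hard work was done in proving the Feferman--Vaught-style decomposition; the theorem is essentially its advertised consequence.

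For the forward direction, I would simply invoke Proposition \ref{elementaryequivalenceofparts}. The compatible Hahn product $G_i^o \cprod H_i$ is built precisely so that its ordered part is $G_i^o$ and its valued part is (canonically isomorphic to) $H_i$, as noted in the discussion preceding the statement of the theorem. Hence $G_1^o \cprod H_1 \equiv G_2^o \cprod H_2$ forces $G_1^o \equiv G_2^o$ and $H_1 \equiv H_2$.

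For the reverse direction, let $\phi$ be an arbitrary $\langue$-sentence (the empty-arity case of Lemma \ref{fefermanvaughtlemma}). The lemma supplies an integer $n$ and sentences $\phi_1^o, \dots, \phi_n^o, \phi_1^v, \dots, \phi_n^v$ such that for any ordered abelian group $A$ and any valuationally quasi-ordered group $B$,
\[
A \cprod B \models \phi \iff \exists\, i \in \{1,\dots,n\}\ (A \models \phi_i^o \text{ and } B \models \phi_i^v).
\]
Assuming $G_1^o \equiv G_2^o$ and $H_1 \equiv H_2$, we have $G_1^o \models \phi_i^o \Leftrightarrow G_2^o \models \phi_i^o$ and $H_1 \models \phi_i^v \Leftrightarrow H_2 \models \phi_i^v$ for each $i$. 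Chaining these equivalences yields
\[
G_1^o \cprod H_1 \models \phi \iff G_2^o \cprod H_2 \models \phi,
\]
and since $\phi$ was arbitrary we conclude $G_1^o \cprod H_1 \equiv G_2^o \cprod H_2$, completing the proof.
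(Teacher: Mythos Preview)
Your proposal is correct and follows essentially the same approach as the paper: the forward direction is reduced to Proposition \ref{elementaryequivalenceofparts} (using that $G_i^o$ and $H_i$ are the ordered and valued parts of $G_i^o\cprod H_i$), and the reverse direction applies Lemma \ref{fefermanvaughtlemma} to an arbitrary sentence and transfers the witnessing index $i$ via the assumed elementary equivalences. The paper's proof is identical in structure and detail.
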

        
        \begin{proof}
         One direction of the theorem is given by Proposition \ref{elementaryequivalenceofparts}, let us now prove 
         the converse. Assume that $G_1^o\equiv G_2^o$ and $H_1\equiv H_2$ holds and let $\phi$ be a sentence of $\langue$ with 
         $G_1^o\cprod H_1\vDash\phi$. Take $\phi_1^o,\dots,\phi_n^o,\phi_1^v,\dots,\phi_n^v$ 
         as in Lemma \ref{fefermanvaughtlemma}. Since $G_1^o\cprod H_1\vDash\phi$, there is 
         $i\in\{1,\dots,n\}$ such that $G_1^o\vDash\phi_i^o$ and $H_1\vDash\phi_i^v$. By assumption, we then have 
         $G_2^o\vDash\phi_i^o$ and $H_2\vDash\phi_i^v$, which by choice of $\phi_i^o,\phi_i^v$ implies that 
         $G_2^o\cprod H_2\vDash\phi$. We could show similarly that $G_2^o\cprod H_2\vDash\phi$ implies 
         $G_1^o\cprod H_1\vDash\phi$, hence $G_2^o\cprod H_2\equiv G_1^o\cprod H_1$.          
        \end{proof}

\section{Quasi-order-minimality and C-relations}\label{minimalitysection}

  The main goal of the theory of quasi-orders is to study orders and valuations in a common framework. 
  A powerful tool in the study of ordered structures is the notion of o-minimality. We now intend to develop an analogous
  notion for quasi-ordered groups called quasi-order-minimality.  O-minimality is a special case of 
  quasi-order-minimality and the latter should also be applicable to valued groups.
  
  An o-minimal group is defined as an ordered group $(G,\leq)$ such that any definable subset of $G$ is 
  a finite disjoint union of intervals. By analogy, we want to define a quasi-order-minimal group as a group in which every definable
  one-dimensional subset is a finite disjoint union of ``simple'' definable sets. This requires 
  first determining what the 
  ``simple definable sets'' are in the case of quasi-ordered groups.
  
  Jan Holly (see \cite{Holly}) already gave the shape of simple definable sets of valued fields: they are what she called 
  swiss cheeses, i.e
  sets of the form $X\backslash \bigcup_{i=1}^nX_i$ where $X$ and each $X_i$ is an ultrametric ball.
  Following her idea, we define a \textbf{$\precsim$-ball} of a compatible q.o.a.g $(G,\precsim)$ as a set of the form
  $\{g\in G\mid g-a\precsim b\}$ (closed ball) or $\{g\in G\mid g-a\precnsim b\}$(open ball) for some parameters
  $a,b\in G$. We then define a \textbf{swiss cheese} of $G$ as a subset of $G$ of the form
  $X\backslash \bigcup_{i=1}^nX_i$ where each $X_i$ and $X$ are $\precsim$-balls. Now consider compatible q.o.a.g's as structures
  of the language $\{0,+,-,\precsim\}$. We say that a compatible q.o.a.g
  $(G,0,+,-,\precsim)$ is 
  \textbf{quasi-order-minimal} if the following condition holds: 
  for every compatible q.o.a.g $(H,0,+,-,\precsim)$ which is elementarily equivalent to $(G,0,+,-,\precsim)$ 
  , every definable subset of $H$ is a finite disjoint union of swiss cheeses.
  Note that if $(G,\precsim)$ happens to be an ordered abelian group then the $\precsim$-balls are just
  initial segments, and the class of finite unions of swiss cheeses is exactly the class of finite unions of 
  intervals, so the notion of quasi-order-minimality coincides with o-minimality.

  As already announced in the introduction, we now want to show that compatible quasi-orders naturally induce a C-relation 
  and that quasi-order-minimality is equivalent to C-minimality.
  The notion of C-structure was first introduced by Adeleke and Neumann in \cite{Adeleke1} and \cite{Adeleke2}.
  Macpherson and Steinhorn then developed the notion of C-minimality and C-minimal groups in 
  \cite{Macstein}. Delon then gave a slightly more general definition of C-structures in \cite{Delon}, which is the 
  one we give here.
  A \textbf{C-relation} on a set $M$ is a ternary relation $C$ satisfying the following axioms:
	  \begin{itemize}
	\item[$(C_1)$] $C(x,y,z)\Rightarrow C(x,z,y)$
	\item[$(C_2)$] $C(x,y,z)\Rightarrow\neg C(y,x,z)$
	\item[$(C_3)$] $C(x,y,z)\Rightarrow C(w,y,z)\vee C(x,w,z)$
	\item[$(C_4)$] $x\neq y\Rightarrow C(x,y,y)$
	\end{itemize}
  We say that a structure $\mathcal{M}=(M,C,\dots)$ endowed with a C-relation is C-minimal if for every 
  $\mathcal{N}=(N,C,\dots)$ such that $\mathcal{N}\equiv\mathcal{M}$ every definable subset of $N$ is
  quantifier-free definable in the language $\{C\}$.
  If $G$ is a group and $C$ a C-relation on $G$, we say that $C$ is \textbf{compatible with $+$} if 
  $C(x,y,z)$ implies $C(v+x+u,v+y+u,v+z+u)$ for any $x,y,z,u,v\in G$ (Note that this is the definition 
  given in \cite{Macstein}, where the authors consider groups which are not necessarily abelian).  
  A C-group is a 
  pair $(G,C)$ consisting of a group $G$ with a C-relation $C$ compatible with $+$.
  
  We already know two examples of C-groups:
  \begin{enumerate}[(i)]
         \item If $(G,\leq)$ is an ordered abelian group, then 
       $\leq$ induces a C-relation compatible with $+$, defined by 
       $C(x,y,z)\Leftrightarrow (y<x\wedge z<x)\vee(y=z\neq x)$.
       \item If $(G,v)$ is a valued group, then $v$ induces a C-relation compatible with $+$
       by $C(x,y,z): v(y-z)>v(x-z)$.
             \end{enumerate}

             Since a compatible q.o is a mix of order and valuation, it is natural to wonder if it 
             induces a C-relation. As it happens, we can associate a compatible C-relation to any compatible 
             q.o.a.g in a canonical way. The idea is to mix the definition of a C-relation coming from an order 
             with the definition
             of a C-relation coming from a valuation.
             
       \begin{Prop}\label{Creldunsqo}
        Let $(G,\precsim)$ be a compatible q.o.a.g. Consider the relation 
        $C(x,y,z)$ defined by 
        \[(x\neq y=z)\vee(x-z\in G^v\wedge (y-z\precnsim x-z))
    \vee(y-z,x-z\in G^o
   \wedge (0\precnsim x-y\wedge 0\precnsim x-z))\]
         This is a C-relation compatible with $+$, called the C-relation induced by the q.o $\precsim$.
         Moreover, $\precsim$ is the only compatible q.o inducing $C$, $C$ is quantifier-free definable in the language
         $(0,+,-,\precsim)$ and $\precsim$ is quantifier-free definable in $(0,+,-,C)$.          
       \end{Prop}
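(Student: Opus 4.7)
The plan has four components: verify the four C-relation axioms together with compatibility with $+$, establish quantifier-free definability of $C$ from $\precsim$, establish quantifier-free definability of $\precsim$ from $C$, and deduce uniqueness. I expect the axiom $(C_3)$ to be the main obstacle, since it requires a case analysis that mixes the ordered and valued parts of $G$.

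Compatibility with $+$ follows by inspection: every atomic subformula of the defining formula refers to $x,y,z$ only through the differences $x-y$, $x-z$, $y-z$, which are translation-invariant, as is membership in $G^o$ or $G^v$. Axiom $(C_4)$ is built into the first disjunct, since $C(x,y,y)$ reduces to $x\neq y$. For $(C_1)$, I would argue by cases on which disjunct of $C(x,y,z)$ holds; the nontrivial case is the v-type disjunct, where $x-z\in G^v$ with $y-z\precnsim x-z$ forces, via the ultrametric inequality of Proposition \ref{ultrametric}, that $x-y\sim x-z$, and Lemma \ref{lemma1} relates $z-y$ to $y-z$, so the v-type disjunct of $C(x,z,y)$ also holds. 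For $(C_2)$, antisymmetry in the o-type disjunct is clear because $0\precnsim x-y$ contradicts $0\precnsim y-x$, and in the v-type disjunct it follows from $x-y\sim x-z$, which precludes $x-z\precnsim y-z$.

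For $(C_3)$, I plan to stratify by the type of $x-z$ and the relative position of $w-z$. If $x-z\in G^v$, then $y-z\precnsim x-z$; for arbitrary $w$, either $w-z\precnsim x-z$, in which case the ultrametric inequality gives $x-w\sim x-z\in G^v$ and the v-type disjunct of $C(x,w,z)$ holds, or $x-z\precsim w-z$, which forces $w-z\in G^v$ (by the initial-segment property of Proposition \ref{segin}) and yields $y-z\precnsim x-z\precsim w-z$, giving the v-type disjunct of $C(w,y,z)$. If $x-z\in G^o$ with $0\precnsim x-y$ and $0\precnsim x-z$, the argument reduces to the standard C-axiom verification on the ordered group $(G^o,\precsim)$ when $w-z\in G^o$, while the case $w-z\in G^v$ is handled directly by the v-type disjunct of $C(w,y,z)$, since $x-z\in G^o\precnsim w-z$ forces $y-z\precnsim w-z$.

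For the definability statements, $C$ is quantifier-free in $\{0,+,-,\precsim\}$ directly from its definition, together with the quantifier-free formula $x=0\vee -x\nsim x$ for $G^o$ from Section \ref{qoagsection}. Conversely, inspecting the disjuncts shows that $C(g,0,-g)$ is equivalent to $g\in G^o\wedge 0\precnsim g$, so $G^o\setminus\{0\}=\{g:C(g,0,-g)\vee C(-g,0,g)\}$, and for $g,h\in G^v\setminus\{0\}$ we have $C(h,g,0)\iff g\precnsim h$. Combining these with a case split on the types of $x$ and $y$ (using that $x\precsim y$ holds automatically whenever $x\in G^o$ and $y\in G^v$, by Proposition \ref{segin}) yields a quantifier-free formula for $x\precsim y$ in terms of $C$ and $+,-,0$. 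Uniqueness is then immediate: any two compatible q.o.\ s $\precsim_1,\precsim_2$ inducing the same $C$ are both recovered by this same quantifier-free formula, hence coincide.
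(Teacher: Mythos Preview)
Your proposal is correct and follows essentially the same approach as the paper's proof: both verify the C-axioms by a case analysis on the three disjuncts (with $(C_3)$ being the substantive case), both observe compatibility with $+$ and definability of $C$ from $\precsim$ by inspection, and both recover $\precsim$ from $C$ by first isolating the positive o-type elements via a formula of the shape $C(g,\pm g,0)$ (the paper uses $C(x,-x,0)$, you use $C(g,0,-g)$; these turn out to define the same set). The only organizational difference is that the paper packages the v-type part of the argument via the auxiliary valuation $v$ of Proposition~\ref{valuation} and its associated C-relation $C^v$, whereas you invoke Proposition~\ref{ultrametric} directly; and you should not forget to dispatch the trivial first-disjunct case $x\neq y=z$ when checking $(C_3)$.
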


        \begin{proof}
          Let $v$ be the valuation defined in Proposition \ref{valuation} and $C^v$ the C-relation it induces, i.e  
        $C^v(x,y,z)\Leftrightarrow v(y-z)> v(x-z)$.         
        Let us show that $C$ satisfies the axioms of C-relations. It is clear from the definition of $C$ that
         $(C_4)$ holds. We prove $(C_1),(C_2),(C_3)$ simultaneously; 
         take $x,y,z,w\in G$ such that 
         $C(x,y,z)$ and let us show that
         $C(x,z,y),\neg C(y,x,z)$ and 
         $C(w,y,z)\vee C(x,w,z)$ hold.
         Assume first that $y=z$. Then clearly $C(x,z,y)$ holds.     Note also that by definition of $C$, $C(y,x,y)$ 
         cannot hold, so $\neg C(y,x,z)$ hods. Finally, if $w\neq y$ then by $(C_4)$ $C(w,y,z)$ holds, and if 
         $w=y$ then $C(x,w,z)$ holds. Now assume
         $z\neq y$.
         Assume first that $x-z\in G^v$. We thus have $C^v(x,y,z)$ which implies 
         $C^v(x,z,y)$ which means $v(z-y)>v(x-y)$ and since $v$ takes its maximal non-infinite value on $G^o\backslash\{0\}$ and is constant 
         on $G^o\backslash\{0\}$, it follows that 
         $x-y\in G^v$, 
         so $C(x,z,y)$ holds.
         We also have $\neg C^v(y,x,z)$, and since $x-z\notin G^o$ this implies $\neg C(y,x,z)$.
         If $x-z\precsim w-z$ then $w-z\in G^v$ because $G^v$ is a final segment of $G$. In that case we have 
         $C(w,y,z)$. Otherwise we have $w-z\precnsim x-z$ so $C(x,w,z)$.
         Assume now that $x-z,y-z\in G^o$, so $0\precnsim x-y\wedge 0\precnsim x-z$. We can obviously exchange
         $x$ and $y$ in this formula, hence $C(x,z,y)$. However, $0\precnsim x-y\in G^o$ implies 
         $y-x\precnsim 0$, so $C(y,x,z)$ does not hold. If $w-z\in G^v$ then since $y-z\in G^o$ we have 
         $y-z\precnsim w-z$ hence $C(w,y,z)$. Assume $w-z\in G^o$, which also implies $w-x,w-y\in G^o$. If 
         $C(w,y,z)$ does not hold, then either $w-z\precnsim 0$ or $w-y\precnsim 0$ must be true. If
         $w-z\precnsim 0$ then $0\precnsim z-w$, hence $0\precnsim x-w=x-z+z-w$, so $C(x,w,z)$ holds; the same reasoning
         holds if $w-y\precnsim 0$.

          The fact that $C$ is compatible with $+$ is obvious from its definition.
          Note that $G^o$, $G^v$ are both quantifier-free definable in the language $\{0,+,-,\precsim\}$ since we have
          $G^o=\{g\in G\mid g\nsim -g\vee g=0\}$ and $G^v=\{g\in G\mid g\sim -g\neq0\}$, so $C$ is defined with a 
          quantifier-free formula
          of that language.
          We want to show the converse. Set $G^+:=\{g\in G\mid 0\precsim g\}$ and $G^-=\{g\in G\mid g\precnsim 0\}$.
          We want to find a formula defining $\precsim$ in the language $\{0,+,-,C\}$.
          Note first that we have
          \[x\precnsim y\Leftrightarrow (x\in G^-\wedge y\in G^+)\vee(x,y\in G^+\wedge x\precnsim y)\vee
          (x,y\in G^-\wedge -y\precnsim -x)\]
          It is easy to see from the definition of $C$ that for any $x,y\in G^+$, 
          $x\precnsim y\Leftrightarrow C(y,x,0)$. Moreover, $G^-$ and $G^+$ are quantifier-free 
          definable with $C$: 
          $G^+\cap G^o$ is given by the formula $C(x,-x,0)\vee(x=0)$. Indeed,  by definition of $C$ we have
          $C(x,-x,0)\Leftrightarrow (x\neq -x=0)\vee(x\in G^v\wedge -x\precnsim x)\vee (x,-x\in G^o\wedge 0\precnsim x\wedge
          -x\precnsim x)$. Obviously $(x\neq -x=0)\vee(x\in G^v\wedge -x\precnsim x)$ is impossible 
          (if $x$ is in $G^v$ then $x\sim -x$) so $C(x,-x,0)\Leftrightarrow(x,-x\in G^o\wedge 0\precnsim x\wedge
          -x\precnsim x)$ which means $x\in G^o\cap G^+$. It follows that 
          $G^-$ is defined by the formula $C(-x,x,0)$ and that $G^v$ is defined by $\neg C(x,-x,0)\wedge\neg C(-x,x,0)$.
          Thus, the formula
          \[\phi(x,y):\equiv(x\in G^-\wedge y\in G^+)\vee(x,y\in G^+\wedge C(y,x,0))\vee
          (x,y\in G^-\wedge C(-x,-y,0))\] is a quantifier-free formula of the language $\{0,+,-,C\}$ and we have 
          $x\precsim y\Leftrightarrow\neg \phi(y,x)$ for any $x,y\in G$. This proves that $\precsim$ is 
          quantifier-free definable
          in $\{0,+,-,C\}$ and it also proves that $\precsim$ is the only compatible q.o inducing $C$ since we can 
          recover $\precsim$ from $C$.

        \end{proof}

        Thus, compatible q.o.a.g's can be seen as C-groups. However, they do not describe every abelian C-group:
        we can give examples
        of abelian C-groups whose C-relation does not come from a compatible q.o.
        Consider $G:=\Z^2$ endowed with the following C-relation:
        \[C(x,y,z)\Leftrightarrow 
        (x_1\neq y_1=z_1)\vee(y_1<x_1\wedge z_1<x_1)\vee (x\neq y=z)\]
        where $x=(x_1,x_2), y=(y_1,y_2)$ and $z=(z_1,z_2)$.
        It is easy to see that this is a compatible C-relation, but it is not induced by a compatible q.o.
        This proves that the class of compatible quasi-orders does not coincide with the whole class of abelian C-groups; however, it will be showed
        in \cite{Lehericy} that we can still use quasi-orders to classify 
        C-groups. More precisely, \cite{Lehericy} introduces the notion of C-quasi-orders, which in some sense generalize compatible 
        q.o's. C-quasi-orders are in a one-to-one correspondence with compatible C-relations on groups. In \cite{Lehericy}, we use 
        C-quasi-orders to describe the structure of C-groups. The main theorem of \cite{Lehericy} (Theorem 3.41) states that 
        C-quasi-ordered groups are also a ``mix'' of ordered and valued groups, but the ``mix'' is in general more complicated than 
        in the case of compatible q.o's; indeed, a C-quasi-order can alternate infinitely many times between ``ordered'' parts and 
        ``valued'' parts, whereas the ordered part of a compatible q.o is always an initial segment of the group. This means 
        that compatible q.o.a.g's form a particularly simple class of C-groups and makes them practical for the study of 
        C-groups.
        
        We now show that quasi-order-minimality is equivalent to 
        C-minimality. In \cite{Delon} the author defined a notion of swiss cheeses for C-structures.
         Let $(M,C,\dots)$ be a C-structure.
         A \textbf{cone} is a subset of $M$ of the form $\{x\mid C(a,x,b)\}$ for some parameters $a,b\in M$.
         A \textbf{thick cone} is a subset of $M$ of the form $\{x\mid \neg C(x,a,b)\}$ for some parameters $a,b\in M$.
         A \textbf{C-swiss cheese} is a subset of $M$ of the form $X\backslash\bigcup_{i=1}^nX_i$ where each 
         $X_i$ and $X$ 
         is a (possibly thick) cone.         
        The author of \cite{Delon} then gives a characterization of C-minimal structures in terms of $C$-swiss cheeses (Proposition 
        3.3 of \cite{Delon}) which we reformulate here:
        
        \begin{Prop}
         Let $\mathcal{M}=(M,C,\dots)$ be a C-structure. Then $(M,C,\dots)$ is C-minimal if and only if for every 
         $\mathcal{N}=(N,C,\dots)$ with $\mathcal{N}\equiv\mathcal{M}$ every definable subset of 
         $N$ is a finite disjoint
         union of $C$-swiss cheeses.
        \end{Prop}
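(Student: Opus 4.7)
The plan is to prove the equivalence in two separate directions. The easy direction is from right to left: suppose that for every $\mathcal{N}\equiv\mathcal{M}$, every definable subset of $N$ is a finite disjoint union of $C$-swiss cheeses. Since by construction every cone $\{x\mid C(a,x,b)\}$ and every thick cone $\{x\mid\neg C(x,a,b)\}$ is itself defined by a quantifier-free formula in $\{C\}$ with parameters, any finite Boolean combination of such sets is quantifier-free $\{C\}$-definable. In particular, any finite disjoint union of $C$-swiss cheeses (being a Boolean combination of cones and thick cones) is quantifier-free definable using only $C$, which is exactly the definition of C-minimality given in the paper.

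For the harder direction, assume $\mathcal{M}$ is C-minimal and let $X\subseteq N$ be definable with $\mathcal{N}\equiv\mathcal{M}$. By C-minimality, $X$ is defined by a quantifier-free $\{C\}$-formula $\phi(x,\bar a)$. The strategy is to show by induction on the complexity of $\phi$ that the set defined by $\phi$ is a finite disjoint union of $C$-swiss cheeses. For the atomic step, every atomic formula with one free variable $x$ and parameters takes the form $C(t_1,t_2,t_3)$ or $t_1=t_2$, where each $t_i$ is either $x$ or a parameter; using $(C_1)$ to normalise, a finite case analysis shows that each such atomic formula defines either a cone, a thick cone, the complement of one of these, a singleton, or a co-singleton, and all of these are finite disjoint unions of (possibly thick) cones once one observes that the complement of a cone decomposes into a thick cone together with (at most) one singleton, and dually for thick cones.

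The core combinatorial step is the induction on Boolean combinations. Using $(C_1)$--$(C_4)$, one establishes the classical structural fact that any two cones of $N$ are either disjoint or nested (one contains the other), and that a thick cone decomposes canonically as the disjoint union of its ``maximal proper cones'' together with its centre point. From these facts it follows that the class of finite disjoint unions of $C$-swiss cheeses is closed under finite intersection, finite union, and complement. An immediate induction on $\phi$ then produces the desired decomposition of $X$.

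The main obstacle is precisely this combinatorial closure lemma: verifying that intersections and complements of $C$-swiss cheeses can be rewritten as finite disjoint unions of $C$-swiss cheeses. This is where the tree-like structure imposed by $(C_3)$ is essential, and it is the technical heart of the argument. Modulo this lemma (which is established in \cite{Adeleke1,Adeleke2} and used in \cite{Delon}), the equivalence follows mechanically by induction on formula complexity.
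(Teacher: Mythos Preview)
The paper does not give its own proof of this proposition: it is quoted from \cite{Delon} (Proposition~3.3 there) and stated without argument, so there is nothing to compare your attempt against. Your sketch is precisely the standard route to such a characterisation: the right-to-left direction is immediate because cones and thick cones are quantifier-free $\{C\}$-definable, and for the left-to-right direction the reduction to showing that the class of finite disjoint unions of swiss cheeses forms a Boolean subalgebra is the correct strategy, with the tree-like behaviour forced by $(C_3)$ (any two cones are disjoint or nested) being the engine of that closure lemma.

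One small inaccuracy worth flagging: your claim that ``the complement of a cone decomposes into a thick cone together with (at most) one singleton, and dually for thick cones'' is not correct as stated. In an ultrametric space, for instance, the complement of an open ball is not a closed ball plus a point. What is true, and what suffices for your argument, is that the complement of a cone (respectively of a thick cone) is a single swiss cheese, namely the ambient space with one cone (respectively one thick cone) removed; this uses the convention, standard in \cite{Delon}, that the whole space counts as an admissible outer shell. This does not affect the validity of your overall plan, but the sentence as written should be corrected.
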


        If we have a compatible q.o.a.g $(G,\precsim)$ and $C$ is the C-relation induced by $\precsim$, it is easy to see
        that the cones (respectively the thick cones) of the C-structure are exactly the open 
        (respectively closed) $\precsim$-balls. It follows that the $C$-swiss cheeses of the C-structure coincide with 
        the swiss cheeses of the quasi-order structure and that quasi-order-minimality 
        is equivalent to C-minimality:
        
        \begin{Prop}\label{Cminimal}
          Let $\mathcal{L}$ be a language containing $\{0,+,-,\precsim\}$ and let $\mathcal{L}'$ be the 
          language obtained by replacing $\precsim$ by $C$ in $\mathcal{L}$.
         Assume $(G,\precsim,\dots)$ is an $\mathcal{L}$-structure so that $(G,\precsim)$
         is a compatible q.o.a.g and let $(G,C,\dots)$ be the corresponding $\mathcal{L}'$-structure where $C$ is
         interpreted as the C-relation induced by $\precsim$. Then
         $(G,\precsim,\dots)$ is quasi-order-minimal if and only if  $(G,C,\dots)$ is C-minimal.
        \end{Prop}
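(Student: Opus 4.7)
The plan is to use the interdefinability of $\precsim$ and $C$ established in Proposition \ref{Creldunsqo} to reduce the equivalence of the two minimality notions to a direct comparison of their respective classes of ``simple'' sets. Since $C$ is quantifier-free definable from $\{0,+,-,\precsim\}$ and $\precsim$ is quantifier-free definable from $\{0,+,-,C\}$, the $\mathcal{L}$-structure $(G,\precsim,\dots)$ and the $\mathcal{L}'$-structure $(G,C,\dots)$ have the same definable sets, and elementarily equivalent models in one language correspond bijectively to elementarily equivalent models in the other (via the same interdefining quantifier-free formulas). In particular, the quantifier over ``all $\mathcal{N}\equiv\mathcal{M}$'' in the two minimality definitions ranges over the same class of structures.

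Next I would identify the ``simple'' sets on both sides. Fix parameters $a,b\in G$. By the formula given in Proposition \ref{Creldunsqo} for $C$, one checks directly that
\[\{x\in G\mid C(a,x,b)\}=\{x\in G\mid x-b\precnsim a-b\},\]
so cones of $(G,C)$ are exactly the open $\precsim$-balls of $(G,\precsim)$; a symmetric computation, distinguishing the cases $a-b\in G^o$ and $a-b\in G^v$ and using axiom $(C_2)$ together with the definition of $C$, shows that
\[\{x\in G\mid \neg C(x,a,b)\}=\{x\in G\mid x-b\precsim a-b\},\]
so thick cones are exactly closed $\precsim$-balls. Both identifications are ``if and only if'', which is the only step requiring a small case analysis; everywhere else the argument is bookkeeping.

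With these two equalities, the class of C-swiss cheeses of $(G,C,\dots)$ coincides verbatim with the class of swiss cheeses of $(G,\precsim,\dots)$ as defined just before Proposition \ref{Creldunsqo}. Finally I would apply the characterization of C-minimality recalled just above: $(G,C,\dots)$ is C-minimal if and only if in every elementarily equivalent structure every definable subset is a finite disjoint union of C-swiss cheeses, which by the previous paragraph is equivalent to being a finite disjoint union of swiss cheeses, which in turn is exactly the definition of quasi-order-minimality for $(G,\precsim,\dots)$. The main (and only non-bookkeeping) obstacle is the case analysis verifying that cones and thick cones precisely match open and closed $\precsim$-balls; once this is done, the equivalence drops out immediately from the characterization via C-swiss cheeses and the interdefinability of the two languages.
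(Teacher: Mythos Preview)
Your proposal follows exactly the paper's approach (given in the paragraph immediately preceding the proposition): use the quantifier-free interdefinability from Proposition~\ref{Creldunsqo} to match up definable sets and elementarily equivalent models, identify (thick) cones with open (closed) $\precsim$-balls, deduce that the two notions of swiss cheese coincide, and invoke Delon's characterization of C-minimality.

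One small caveat worth tracking in your case analysis: the displayed identities are not literally correct for all parameters. When $a-b\in G^o$ with $a-b\precnsim 0$, the cone $\{x\mid C(a,x,b)\}$ is the singleton $\{b\}$ rather than $\{x\mid x-b\precnsim a-b\}$, and the thick cone $\{x\mid\neg C(x,a,b)\}$ equals $\{x\mid x-b\precsim 0\}$ rather than $\{x\mid x-b\precsim a-b\}$; when $a=b$ the cone is empty and the thick cone is $\{a\}$. The paper's one-line justification glosses over the same point. Conversely, every open (respectively closed) $\precsim$-ball \emph{is} a cone (respectively thick cone), though possibly only after reparametrizing (e.g.\ the open ball $\{x\mid x-c\precnsim d\}$ with $d\in G^o$, $d\precsim 0$ equals the cone with $a=c+d$ and any $b\in c+G^o$ with $b\precnsim c+d$). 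Since singletons and $\emptyset$ are $\precsim$-swiss cheeses anyway, the two classes of finite unions of swiss cheeses still coincide, and your conclusion is unaffected.
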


       Finally, as a motivation for the study of compatible q.o.a.g's, we give an example of a C-minimal group  whose C-relation comes from a compatible q.o which 
       is neither an order nor a valuation. Theorem \ref{prodCminimal} will allow us to construct such examples. We first need to show the following lemma:

       \begin{Lem}\label{ultrballvpart}
        Let $G^o$ be an ordered group,
         $H$ a valuationally quasi-ordered group such that $H\backslash\{0\}$ has
         a minimum $m$ and set $G:=G^o\cprod H$. Then for any boolean combination $\phi(x,\bar{a}_v)$ of balls 
         with $\bar{a}_v\subseteq H$, there exists a boolean combination $\phi^{\ast}(x,\bar{a}_v,m)$ of balls 
         with $\bar{a}\subseteq G$ such that for any 
        $g=g_o+g_v\in G$, $G\vDash\phi^{\ast}(g,\bar{a}_v,m)$ if and only if $H\vDash\phi(g_v,\bar{a}_v)$.
       \end{Lem}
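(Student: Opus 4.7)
The plan is to translate each ball atom of $\phi$ into a ball (or trivial Boolean combination of balls) in $G$ whose truth value at $g = g_o + g_v$ matches that of the atom at $g_v$, and then extend to all Boolean combinations by recursion. The key input is Proposition \ref{formulafortheqo}, which expresses $\precsim$ on $G = G^o \cprod H$ via the order on $G^o$ and the valuation on $H$; combined with the fact that $G^o$ is an initial segment (Proposition \ref{segin}), this controls how a ball in $H$ lifts to $G$.

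I would start with an atomic closed ball $B = \{x \in H : x - a \precsim b\}$ with $a, b \in H$. If $b \neq 0$, I claim that the same formula $y - a \precsim b$ interpreted in $G$ defines exactly $\{y \in G : y_v - a \precsim b \text{ in } H\}$. Indeed, when $y_v = a$ one has $y - a = y_o \in G^o$, which is $\precnsim$ every v-type element, so $y - a \precsim b$ holds; and $y_v - a = 0 \precsim b$ in $H$ also holds. When $y_v \neq a$, the element $y - a$ is v-type and, identifying $H$ with $G/G^o$, its $G$-valuation coincides with the $H$-valuation of $y_v - a$; hence $y - a \precsim b$ in $G$ iff $y_v - a \precsim b$ in $H$. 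The same case analysis handles open balls with $b \neq 0$.

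The remaining case is $b = 0$. Axiom $(Q_1)$ forces $\{x \in H : x - a \precnsim 0\} = \emptyset$, which I would translate as any contradictory Boolean combination of balls in $G$, and $\{x \in H : x - a \precsim 0\} = \{a\}$. Here the parameter $m$ enters: I claim the open ball $\{y \in G : y - a \precnsim m\}$ equals $\{y \in G : y_v = a\}$. If $y_v = a$, then $y - a \in G^o$ while $m \in G^v$, so by Proposition \ref{segin} we have $y - a \precnsim m$. Conversely, if $y_v \neq a$, then $y - a \in G^v$, and since $m$ is the minimum of $H \setminus \{0\}$ its $H$-valuation is maximal, so the $H$-valuation of $y_v - a$ is at most that of $m$, forcing $m \precsim y - a$ and ruling out $y - a \precnsim m$.

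Putting the pieces together, I would define $\phi^{\ast}$ by recursion on the Boolean structure of $\phi$, replacing each ball atom by its translation above. Since each replacement satisfies $g \in B^{\ast} \Leftrightarrow g_v \in B$ and since Boolean connectives commute with this equivalence, the resulting $\phi^{\ast}(x, \bar{a}_v, m)$ has the required property. The main obstacle is precisely the singleton case $b = 0$: this is the only ball in $H$ whose natural lift to $G$ is not defined by the same parameters, and isolating $\{y : y_v = a\}$ as a ball in $G$ requires an extra parameter of minimal v-type value, which is exactly why the hypothesis that $H \setminus \{0\}$ admits a minimum $m$ is needed.
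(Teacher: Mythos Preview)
Your proposal is correct and follows essentially the same approach as the paper's proof: reduce to atomic balls, keep the same formula when $b\neq 0$ (the paper cites Remark \ref{remarkonquotientqo}(4), you unpack the same fact via the case split $y_v=a$ versus $y_v\neq a$), and for $b=0$ use the open ball $x-a\precnsim m$ to capture $\{y:y_v=a\}$ and a contradictory formula for the empty open ball. The only difference is presentational---you give a more detailed justification where the paper simply invokes the quotient remark.
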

	\begin{proof}
	 It is sufficient to show the lemma in the case where $\phi$ is a ball. 
	 Assume then that $\phi(x)\equiv x-a_v\precsim b_v$. If $b_v\neq0$, then by Remark \ref{remarkonquotientqo}(4)
	 $g_v-a_v\precsim b_v$ is equivalent to $g-a_v\precsim b_v$ for any $g\in G$, so we can just set 
	 $\phi^{\ast}(x)\equiv  x-a_v\precsim b_v$. If $b=0$, then $g_v-a_v\precsim b_v$ is true if and only if 
	 $g-a_v\in G^o$ which is equivalent to $g-a_v\precnsim m$, so we can set 
	 $\phi^{\ast}(x)\equiv x-a_v\precnsim m$. Assume now that $\phi(x)\equiv x-a_v\precnsim b_v$. 
	 If $b_v=0$ then $\phi(x)$ is not satisfiable in $H$, so we set $\phi^{\ast}(x)\equiv x\precnsim 0\wedge 0\precnsim x$. If 
	 $b_v\neq0$, we can set $\phi^{\ast}(x)\equiv x-a_v\precnsim b_v$ thanks to Remark \ref{remarkonquotientqo}(4).
	\end{proof}

       \begin{Thm}\label{prodCminimal}
        Let $G^o$ be an o-minimal group and $H$ a finite valuationally quasi-ordered group. 
        Then $G:=G^o\cprod H$ is C-minimal.
       \end{Thm}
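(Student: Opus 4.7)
The plan is to reduce $C$-minimality to quasi-order-minimality via Proposition \ref{Cminimal}, and then establish the required swiss-cheese decomposition in any $G' \equiv G$ by combining Lemma \ref{fefermanvaughtlemma} with the o-minimality of the ordered part and the finiteness of the valued part.

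The crucial preliminary step is to show that every compatible q.o.a.g.\ $G'$ elementarily equivalent to $G$ is again of the form $(G')^o \cprod H'$ with $(G')^o$ o-minimal and $H'\cong H$. By Proposition \ref{elementaryequivalenceofparts}, $(G')^o$ is elementarily equivalent to $G^o$ (hence o-minimal, since o-minimality transfers under $\equiv$) and $G'/(G')^o$ is elementarily equivalent to $H$; as $H$ is finite, the latter forces $G'/(G')^o \cong H$. By Proposition \ref{Gascompatibleproduct}, the fact that $G$ is a compatible product amounts to $G^o$ being a direct summand in $G$; since $|H|=n$ is a fixed finite number, this is first-order expressible by the sentence ``there exist $x_1=0,x_2,\dots,x_n$, pairwise inequivalent modulo $G^o$, and closed under addition''. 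This sentence holds in $G$ and hence in $G'$, yielding $G'=(G')^o\cprod H'$.

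Next, take a definable set $D\subseteq G'$, say $D=\{g\in G':G'\vDash\phi(g,\bar a)\}$, and write $\bar a=\bar a_o+\bar a_v$ with respect to the splitting of $G'$. Applying Lemma \ref{fefermanvaughtlemma} in $G'$ produces $D=\bigcup_{i=1}^n(A_i+B_i)$, where $A_i=\{x\in (G')^o:(G')^o\vDash\phi_i^o(x,\bar a_o)\}$ and $B_i=\{y\in H':H'\vDash\phi_i^v(y,\bar a_v)\}$. O-minimality of $(G')^o$ makes each $A_i$ a finite union of intervals, and finiteness of $H'$ makes each $B_i$ finite, so $D$ is a finite union of sets of the form $I+\{b\}$ with $I$ an interval of $(G')^o$ and $b\in H'$.

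To finish it remains to show that each $I+\{b\}$ is a swiss cheese of $G'$ and that the total union can be made disjoint. Let $m$ be the $\precsim$-minimum of $H'\setminus\{0\}$ when $H'$ is non-trivial (the case $H'=\{0\}$ reducing directly to the o-minimality of $(G')^o$): the coset $(G')^o+b$ then coincides with the open ball $\{g:g-b\precnsim m\}$; an initial segment $(-\infty,\beta]+b$ coincides with the closed ball $\{g:g-(b+\beta)\precsim 0\}$, and analogous formulas handle open/strict and final segments; a bounded interval becomes a difference of two such balls. Thus each $I+\{b\}$ is a ball or a ball minus a ball, i.e.\ a swiss cheese. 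For disjointness, group the $I+\{b\}$'s by their coset of $(G')^o$ (distinct cosets are automatically disjoint) and, within each coset, refine the finite union of intervals of $(G')^o$ into a disjoint one by o-minimality; each translated interval remains a swiss cheese. The main obstacle is the first step: expressing the compatible-product structure as a first-order property preserved by $\equiv$; after that, the rest is a straightforward combination of Lemma \ref{fefermanvaughtlemma}, o-minimality of the ordered part, and a case analysis of balls in compatible products.
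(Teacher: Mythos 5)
Your proof is correct and rests on the same engine as the paper's: the decomposition of definable sets via Lemma \ref{fefermanvaughtlemma}, o-minimality of the ordered part, finiteness of the valued part, and transfer to elementarily equivalent models via Proposition \ref{elementaryequivalenceofparts}. Two sub-steps are handled differently, both validly. First, to split an arbitrary $G'\equiv G$ as $(G')^o\cprod H'$, the paper invokes the divisibility of o-minimal groups to make $(G')^o$ a direct summand, whereas you express the existence of a size-$n$ complement of the (definable) subgroup $G^o$ by a first-order sentence; this works because a finite subset containing $0$ and closed under addition is a subgroup, and $n$ pairwise $G^o$-incongruent elements form a transversal once one knows $|G'/(G')^o|=n$ (which follows since the valued part of $G'$ is elementarily equivalent, hence isomorphic, to the finite $H$). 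Your variant has the mild advantage of not relying on the divisibility theorem for o-minimal groups. Second, the paper keeps the factors abstract: it uses C-minimality of $G^o$ and of $H$ to replace $\phi_i^o,\phi_i^v$ by boolean combinations of balls and lifts them to $G$ via the formula $\bigwedge_{h\in H}(x-h\precnsim m\Rightarrow\theta_o(x-h,\bar{b}_o))$ together with Lemma \ref{ultrballvpart}; you instead describe the definable sets concretely as finite unions of translated intervals $I+\{b\}$ and verify directly that each such set is a swiss cheese, using the open ball $\{g\mid g-b\precnsim m\}$ for the coset $(G')^o+b$ and balls of the form $\{g\mid g-(b+\beta)\precsim 0\}$ for translated initial segments. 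The two computations cover the same ground; yours is more explicit and also takes care of the disjointness of the union, which the paper leaves implicit.
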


       \begin{proof}
       Note that any finite q.o.a.g is C-minimal, so in particular $H$ is C-minimal.
       Moreover, $H\backslash\{0\}$ admits a minimum which we will denote by $m$. For any $g\in G$, we denote 
       by $g_o$ and $g_v$ the unique elements of $G^o$ and $H$ such that $g=g_o+g_v$.
       We first show that any definable subset of $G$ is a boolean combination of balls.
       Let $\phi(x,\bar{a})$ be a formula of $\langue$ with one free variable and parameters $\bar{a}\subseteq G$. Take 
       $\phi_1^o(x,\bar{y}),\dots,\phi_n^o(x,\bar{y}),\phi_1^v(x,\bar{y}),\dots,\phi_n^v(x,\bar{y})$ as in Lemma \ref{fefermanvaughtlemma}. 
       By Lemma \ref{fefermanvaughtlemma}, it is sufficient to show that 
       for each $i\in\{1,\dots, n\}$, the set 
       $A_i:=\{g\in G\mid G^o\vDash\phi_i^o(g_o,\bar{a}_o)\wedge H\vDash\phi_i^v(g_v,\bar{a}_v)\}$ is a boolean combination 
       of  balls. Fix an $i\in\{1,\dots,n\}$. Since $G^o$ and $H$ are C-minimal, 
       there are two formulas $\theta_o(x,\bar{b}_o),\theta_v(x,\bar{b}_v)$  (with parameters $\bar{b}_o\subseteq G^o$ and 
       $\bar{b}_v\subseteq H$) which are boolean combinations of  balls such that 
       $G^o\vDash\forall x(\phi_i^o(x,\bar{a}_o)\Leftrightarrow \theta_o(x,\bar{b}_o))$ and 
       $H\vDash (\phi_i^v(x,\bar{a}_v)\Leftrightarrow\theta_v(x,\bar{b}_v))$. For any $g=g_o+g_v\in G$, we now have 
       $g\in A_i$ if and only if $G^o\vDash\theta_o(g_o,\bar{b}_o)$ and $H\vDash\theta_v(g_v,\bar{b}_v)$. 
       Now note that  $G^o\vDash\theta_o(g_o,\bar{b}_o)$ if and only if 
       $G\vDash\bigwedge_{h\in H}(g-h\precnsim m\Rightarrow \theta_o(g-h,\bar{b}_o))$ (this follows from the fact that 
       $g_o=g-h$ if and only if $g-h\in G^o$ if and only if $g-h\precnsim m$), and that the formula 
       $\bigwedge_{h\in H}(x-h\precnsim m\Rightarrow \theta_o(x-h,\bar{b}_o))$
       is still a boolean combination of  balls.
       Finally, take $\theta_v^{\ast}(x,\bar{b}_v,m)$ as given by Lemma \ref{ultrballvpart}. The formula 
       $\theta_v^{\ast}(x,\bar{b}_v,m)\wedge\bigwedge_{h\in H}(x-h\precnsim m\Rightarrow \theta_o(x-h,\bar{b}_o))$ is a
       boolean combination of balls which defines $A_i$. This proves that every definable subsets of $G$ is a boolean 
       combination of balls, from which it easily follows that every definable subset of $G$ is a finite union of swiss cheeses. 
       Now  we must  show that the same is true for $G_2$, where $G_2$ is an arbitrary compatible 
       q.o.a.g with $G_2\equiv G$. By Proposition \ref{elementaryequivalenceofparts}, we have 
       $G_2^o\equiv G^o$ and $H_2\equiv H$, where $H_2$ denotes the valued part of $G$. It follows that 
       $G_2^o$ is o-minimal and that $H_2$ is C-minimal and finite (of the same cardinality as $H$). 
       Since $G_2^o$ is o-minimal, it is divisible, so $G_2^o$ is a direct summand of $G_2$, so by Proposition 
       \ref{Gascompatibleproduct} we have $G_2=G_2^o\cprod H_2$. It then follows from what we have already proved that every  
       definable subset of $G_2$ is a finite union of swiss cheeses.
       \end{proof}
       
       \begin{Rem}
        The condition of $H$ being finite was essential in the proof of Theorem \ref{prodCminimal}. 
        Indeed, the assumption that $G^o$ is C-minimal only tells us 
        that the set  $B$ of all $g_o$'s such that $g\in A_i$ is a boolean combination of balls, so it gives us a formula 
        $\theta_o(g_o,\bar{b}_o)$ in which $g_o$ appears. We then need to characterize the $g$'s 
        of $G$ such that $g_o\in B$ with an appropriate formula, i.e we need to
        ``lift'' $\theta_o(g_o,\bar{b}_o)$ to a formula in which $g$ appears instead of $g_o$.         
        The problem is that $g_o$ is in general not definable in 
        $G$ if $H$ is chosen arbitrarily, so we cannot express ``$g_o\in B$'' with a formula. 
        However, if we happen to know that the $g_v$'s of all $g$'s in $A_i$  only take 
        finitely many values in $H$ (as is the case in Theorem \ref{prodCminimal}), then we 
        can express ``$g_o\in B$'' via a formula 
        $\bigwedge_h(g-h\precnsim m\Rightarrow \theta_o(g-h,\bar{b}_o))$, where 
        $h$ ranges over all possible values of $g_v$ for $g$ in $A_i$.  
       \end{Rem}

       We can now give an example of a C-minimal group which is neither ordered nor valued:
       \begin{Ex}
        Let $G^o:=\Q$ with the usual order; it is known that this is an o-minimal structure. 
        Set \newline        
        $H:=(\Z/p^k\Z,v_p)$ with $k\in\N$, where $v_p$ denotes the valuation induced on $H$ by the p-adic valuation of 
        $\Z$. Then by Theorem \ref{prodCminimal}, $G^o\cprod H$ is C-minimal.
       \end{Ex}

 \bibliographystyle{plain}
 \bibliography{referencesASTFAQOG.bib}

\end{document}